\theoremstyle{plain}
\newtheorem{theorem}{Theorem}[section]
\newtheorem{corollary}[theorem]{Corollary}
\newtheorem{proposition}[theorem]{Proposition}
\newtheorem{lemma}[theorem]{Lemma}
\newtheorem{conjecture}[theorem]{Conjecture}
\theoremstyle{definition}
\newtheorem{definition}[theorem]{Definition}
\newtheorem{example}[theorem]{Example}
\newtheorem{notation}[theorem]{Notation}
\newtheorem{remark}[theorem]{Remark}
\newtheoremstyle{case}{}{}{}{}{}{:}{ }{}
\theoremstyle{case}
\tikzstyle{loosely dotted}= [dash pattern=on \pgflinewidth off 4pt]
\newenvironment{packed_enum}{
\begin{enumerate}
  \setlength{\itemsep}{1pt}
  \setlength{\parskip}{0pt}
  \setlength{\parsep}{0pt}
}{\end{enumerate}}
\begin{document}

\title{Identifiability of Linear Compartmental Models: \\
The Effect of Moving Inputs, Outputs, and Leaks}
\author[1]{Seth Gerberding\thanks{Email: {\tt seth.gerberding@coyotes.usd.edu}}}
\affil[1]{\small{University of South Dakota}}

\author[2]{Nida Obatake}
\affil[2]{\small{Texas A\&M University}}

\author[2]{Anne Shiu}

%\date{October 29, 2019}
 \date{\today}
\maketitle

\begin{abstract}
A mathematical model is identifiable if its parameters can be recovered from data. 
Here we investigate, for linear compartmental models, whether 
% {\color{blue}
(local, generic)
% } 
identifiability is preserved when parts of the model -- specifically, inputs, outputs, leaks, and edges -- are moved, added, or deleted. 
Our results are as follows.
First, for certain catenary, cycle, and mammillary models, moving or deleting the leak preserves identifiability.  Next, for cycle models with up to one leak, moving inputs or outputs preserves identifiability.  Thus, every cycle model with up to one leak (and at least one input and at least one output) is identifiable.  
% {\color{blue}
Next, we give conditions
under which adding leaks 
%, related to number and location, for which adding more than one leak 
renders a cycle model unidentifiable.
% }
Finally, for certain cycle models with no leaks, adding specific edges again preserves identifiability.  
Our proofs, which are algebraic and combinatorial in nature, rely on results on elementary symmetric polynomials and the theory of input-output equations for linear compartmental models. 
\end{abstract}

\section{Introduction} \label{sec:intro}
Linear compartmental models are a staple in many biological fields, including pharmacology, ecology, and cell biology. These models describe how something, whether it be drug concentration or toxins, moves within a system. In this work, we focus on the 
% {\color{blue}
(local, generic)
% } 
\textit{identifiability} of linear compartmental models, that is, the ability to recover flow parameters from data. 
%This property is valuable for many reasons. One theoretical goal is to find simpler methods to identify parameters in a model. Before answering \textit{how} to identify the parameters, we must answer whether we \textit{can}; this is is the focus of this work. 

Our motivation is the work of Gross, Harrington, Meshkat, and Shiu~\cite{GHMS19One}, 
which investigated the effect on identifiability of adding or deleting some component -- an input, output, leak, or edge -- of a linear compartmental model.  
%We build upon prior work in [1,2,3], which investigated some of the model operations which may affect identifiability. That is, given a model that is identifiable, does changing the model in some way preserve identifiability, or is the new model unidentifiable? 
Two key problems that they raised are as follows.  
First, it is not %always the case that adding edges to a model preserves identifiability, and it is not 
well understood which edges can be added to a model so that identifiability is preserved~\cite[Table~2]{GHMS19One}.
% {\color{blue}\cite[Table~2]{GHMS19One}}. %Example~5.3
%Therefore, it has to be proved that adding particular edges preserves identifiability. 
Second, the effect of removing a leak remains an open question: the authors conjectured that removing a leak from certain models preserves identifiability~\cite[Conjecture~4.5]{GHMS19One}.
% {\color{blue}\cite[Conjecture~4.5]{GHMS19One}}. 

Here we address both problems: we resolve the leak-removal conjecture for three infinite families of models (Theorem~\ref{Catenary, cycle, mam., w/wo leak, ID}), and prove that adding certain edges to a cycle model preserves identifiability 
(Theorems~\ref{thm:fin-wing-0-1-leak-id} and~\ref{thm:fin-wing-partial}).  
% {\color{blue}
We also analyze how the position and number of leaks affects identifiability (Theorem~\ref{thm:cycle-many-leaks}). 
% }
Finally, by analyzing a new operation on models -- namely, moving the output -- we prove that 
every cycle model with up to one leak, at least one input, and at least one output (in any compartment) is identifiable (Theorem~\ref{Big Cycle Model theorem }). 
Our main results are summarized in Table~\ref{tab:results-summary}.

\begin{table}[h!]
    \centering
    \begin{tabular}{p{0.43\textwidth}|p{0.25\textwidth}|c}
    \hline
        {\bf Linear compartmental model} % $M$ 
            & {\bf Operation} % to obtain $\widetilde{M}$ 
            & {\bf Result} \\ \hline\hline
        % No leaks {\color{blue} (delete?)} & add 1 leak & \cite{GHMS19One} (Proposition~\ref{prop:add-leak-in-out}) \\ \hline\hline
        
        % any $M$ & add one input or one output & $M$ and $\widetilde{M}$ are identifiable & Proposition~\ref{prop:add-leak-in-out}\\ \hline
        
        % \multirow{2}{*}{Any  {\color{blue} (delete?)}} & add 1 input & \multirow{2}{*}{\cite{GHMS19One} (Proposition~\ref{prop:add-leak-in-out})}\\ \cline{2-2}
        % & add 1 output & \\ \hline\hline

        % catenary, cycle, mammillary model with $In=Out=\{1\}$, $Leak=\emptyset$ & (add leak in any compartment) & Theorem~\ref{Catenary, cycle, mam., w/wo leak, ID}\\ \hline
%        catenary with $In=Out=\{1\}$, $Leak=\emptyset$ & \multirow{3}{*}{add one leak} % anywhere
%        & \multirow{3}{*}{Theorem~\ref{Catenary, cycle, mam., w/wo leak, ID}}\\ 
%        cycle with $In=Out=\{1\}$, $Leak=\emptyset$ &  & \\
%        mammillary with $In=Out=\{1\}$, $Leak=\emptyset$ &  & \\ \hline\hline
        Catenary, cycle, or mammillary  & \multirow{2}{*}{add 1 leak} % anywhere
        & \multirow{2}{*}{Theorem~\ref{Catenary, cycle, mam., w/wo leak, ID}}\\ 
        with $In=Out=\{1\}$, $Leak=\emptyset$ &  & \\ \hline \hline
        % cycle model with $|In|=1, |Out|=1, Leak=\emptyset$& cycle model with $|In|\geq 1$, $|Out|\geq 1$, $|Leak|\leq 1$ &  Theorem~\ref{Big Cycle Model theorem }\\
        \multirow{6}{*}{ Cycle with $|In|=|Out|=1, Leak=\emptyset$}
        % & 
        % \sout{add 1 or more inputs}{\color{blue}??} &  \multirow{3}{*}{Theorem~\ref{Big Cycle Model theorem }}\\ \cline{2-2}
        % & \sout{add 1 or more outputs}{\color{blue}??} &  \\ \cline{2-2}
        %\cline{2-3}
        & add 1 leak,  move input, move output & \multirow{2}{*}{Theorem~\ref{Big Cycle Model theorem }} \\ 
        \cline{2-3}
        & add all incoming edges & \multirow{2}{*}{Theorem~\ref{thm:fin-wing-0-1-leak-id}}\\ \cline{2-2}
        & add all outgoing edges & \\ \cline{2-3}
        & add 1 incoming edge & \multirow{2}{*}{Theorem~\ref{thm:fin-wing-partial}}\\ \cline{2-2}
        & add 1 or 2 outgoing edges & \\ \hline
    \end{tabular}
    \caption{Summary of results on operations preserving identifiability.  
    For a strongly connected linear compartmental model $M$ with at least one input, if $\widetilde{M}$ is obtained from $M$ by the specified operation, and $M$ is identifiable, then $\widetilde{M}$ is identifiable.  Related prior results, from~\cite{GHRS}, are summarized in Proposition~\ref{prop:add-leak-in-out}. 
    % {\color{blue} 
    Results on how the operation of adding leaks causes identifiability to be lost appear in Section~\ref{sec:add-leaks-to-cycle}. 
    % }
    }
    \label{tab:results-summary}
\end{table}

% {\color{blue} 
Our work fits into the larger 
goal of determining
how properties of ODE systems defined by graphs are affected
by operations on the graphs.  
See, for instance, recent work on a property closely related to identifiability, namely, observability~\cite{chapman-et-al}.
% }

Our proofs harness the theory of input-output equations for linear compartmental models~\cite{bearup, daisy,glad,GHMS19One,MSE15Two,Ovchinnikov-Pogudin-Thompson}.  As a result, our analyses are largely linear-algebraic and combinatorial. For instance, we apply results on elementary symmetric polynomials. 

%\par Our first result (Lemma \ref{lemma1}) analyzes elementary symmetric polynomials: they become essential in proving the later results. The second result (Theorem~\ref{Catenary, cycle, mam., w/wo leak, ID}) addresses leak deletion in cycle, catenary, and mammillary models: we prove that deleting a leak preserves identifiability. Next, in Theorem \ref{Big Cycle Model theorem }, we prove that a cycle model with one input, one output, and one leak in \textit{any compartment} is identifiable. We do so by analyzing a new operation: moving the output. Our final result (Theorem \ref{thm:hybrid-models-id}) proves that adding certain edges, which we term ``interior edges'' in a cycle model also preserves identifiability. 

This paper is structured as follows. Section~\ref{sec:background} introduces the definitions and tools we use throughout the rest of the paper. 
In Section~\ref{sec:results}, we state and prove our main results.
%: results on elementary symmetric polynomials, some cases where leak deletion preserves identifiability, a cycle model with one input one output, and one leak is generically locally identifiable, and that adding certain interior edges preserves identifiability. 
We end with a discussion in Section~\ref{sec:discussion}.
%discusses avenues for further research as well as some notes which may help with future inquiry. 

\section{Background} \label{sec:background}
We begin with some definitions and important preliminary results, following the notation of~\cite{MSE15Two}. Specifically, we focus on linear models and identifiability. 

\subsection{Linear Compartmental Models}

\par A \textit{linear compartmental model} %(\emph{model}, for short) 
consists of a directed graph $G=(V,E)$, and three sets, \textit{In, Out, Leak}  $\subseteq V$, which are the Input, Output, and Leak compartments, respectively (a {\em compartment} is a vertex $i\in V$).
Each edge $j\to i$ in $E$ represents the flow or transfer of material from the $j^{th}$ compartment to the $i^{th}$ compartment, with associated parameter (rate constant) $k_{ij}$.
%Every edge , indicated by $k_{ij}$, where $i$ indicates the compartment where the flow is going \textit{to}, and $j$ indicates where the flowing is coming \textit{from}. 
Each leak compartment $j \in Leak$ also has an associated parameter, $k_{0j}$, the rate constant for the outflow or degradation from that compartment.

Next, each input compartment has an external input, $u_i(t)$, which fuels the system. 
%That is, the input compartment is the source of the material. 
The output compartments, on the other hand, are measurable: we are able to know the concentration in these compartments. We always assume $Out \neq \emptyset$, as models without outputs are not identifiable.

Figure~\ref{fig:cycle-model} depicts an $n$-compartment cycle model, with $In=Out=Leak=\{1\}$. 
%(In general, the input, output, and leaks can be in any compartment.)
The input compartment is labeled by ``in,'' the output is indicated by an edge with an empty circle at the end, also labeled with ``out,'' and the leak has an outgoing edge labeled by the leak parameter $k_{01}$.
To drive some intuition, Figure~\ref{fig:cycle-model} could model the flow of a drug in the body. Compartment 1 is the injection site, like an arm or thigh. 
The input is the shot. The output represents a device that measures how much drug is still in the injection site. The other compartments could represent organs; so, the drug goes from the injection site, to, e.g., the heart, the lungs, etc., and then back to the injection site.

\begin{figure}[ht]
%----------------------
% FIGURE: GRAPH
%----------------------
\subfigure[The cycle graph.]{
    \label{fig:cycle-graph}
    \centering
        \begin{tikzpicture}[scale=2]
 %-----------------------------
 % CYCLE
 %-----------------------------
  	\draw (-1,0) circle (0.2);	
 % 	\draw (1,0) circle (0.2);	
  	\draw (-0.5,0.866) circle (0.2);	
  	\draw (0.5,-0.866) circle (0.2);	
  	\draw (-0.5,-0.866) circle (0.2);	
 %	\draw (0.5,0.866) circle (0.2);	
 % ARROWS%
    \draw[->] (-0.85, -0.25) -- (-0.62, -0.64 );
  	 \draw[->]  (-0.2,-0.866) -- (0.2,-0.866) ;
 	 \draw[->]  (0.62, -0.64) -- (0.9,-0.17); 
 	 \draw[loosely dotted,thick] (0.9,0.17) -- (0.5,0.866);
 	 \draw[->]  (0.2,0.866) -- (-0.2,0.866);
 	 \draw[->] (-0.62, 0.64 ) -- (-0.85, 0.25)  ;
% 	 \draw[->] (-1.1, .18) -- (-1.5, .5);
 % RATES
   	 \node[] at (0.0, -1.03) {$k_{32}$};
   	 \node[] at (0.0, 1.03) {$k_{n,n-1}$};
   	 \node[] at (-0.55, -0.42) {$k_{21}$};
   	 \node[] at (-0.55, 0.4) {$k_{1n}$};
   	 \node[] at (0.55, -0.42) {$k_{43}$};
 % LABEL: numbers
     	\node[] at (-1, 0) {1};
     	\node[] at  (-0.5,-0.866) {$2$};
     	\node[] at  (+0.5,-0.866) {$3$};
     	\node[] at (-0.5,0.866) {$n$};
 % LABEL: Model name
     	\node[above] at (-1.1, -1.5) {Cycle};
  % BOX
 \draw (-1.5,-1.5) rectangle (1.4, 1.2);
 \end{tikzpicture}
}
\hfill%
%----------------------
% FIGURE: MODEL
%----------------------
\subfigure[A cycle model.]{
     \label{fig:cycle-model}
    \centering
    	\begin{tikzpicture}[scale=1.8]
 %-----------------------------
 % CYCLE
 %-----------------------------
  	\draw (-1,0) circle (0.2);	
 % 	\draw (1,0) circle (0.2);	
  	\draw (-0.5,0.866) circle (0.2);	
  	\draw (0.5,-0.866) circle (0.2);	
  	\draw (-0.5,-0.866) circle (0.2);	
 %	\draw (0.5,0.866) circle (0.2);	
 % ARROWS%
    \draw[->] (-0.85, -0.25) -- (-0.62, -0.64 );
  	 \draw[->]  (-0.2,-0.866) -- (0.2,-0.866) ;
 	 \draw[->]  (0.62, -0.64) -- (0.9,-0.17); 
 	 \draw[loosely dotted,thick] (0.9,0.17) -- (0.5,0.866);
 	 \draw[->]  (0.2,0.866) -- (-0.2,0.866);
 	 \draw[->] (-0.62, 0.64 ) -- (-0.85, 0.25)  ;
 	 \draw[->] (-1.1, .18) -- (-1.5, .5);
 % RATES
   	 \node[] at (0.0, -1.03) {$k_{32}$};
   	 \node[] at (0.0, 1.03) {$k_{n,n-1}$};
   	 \node[] at (-0.55, -0.42) {$k_{21}$};
   	 \node[] at (-0.55, 0.4) {$k_{1n}$};
   	 \node[] at (0.55, -0.42) {$k_{43}$};
   	 \node[] at (-1.1, .4) {$k_{01}$};
 % LABEL: numbers
     	\node[] at (-1, 0) {1};
     	\node[] at  (-0.5,-0.866) {$2$};
     	\node[] at  (+0.5,-0.866) {$3$};
     	\node[] at (-0.5,0.866) {$n$};
 %OUTPUT
  	\draw (-1.33,-.49) circle (0.05);	
 	 \draw[-] (-1, -.2 ) -- (-1.3, -.45);	
 	 \node[] at (-1.65, -.49) {out};
 % Input
 	 \draw[->] (-1.7, 0) -- (-1.3, 0);	
   	 \node[] at (-1.9, 0) {in};
 % LABEL: Model name
 %     	\node[above] at (-1.8, -1.5) {Cycle};
  % BOX
 \draw (-2.2,-1.5) rectangle (1.4, 1.2);
 \end{tikzpicture}
}
    \caption{The cycle graph with $n$ compartments (cf.~\cite[Figure~2]{GNA17Three}), and a linear compartmental model arising from this graph (with $In=Out=Leak=\{1\}$).}    \label{fig:cycle}
\end{figure}
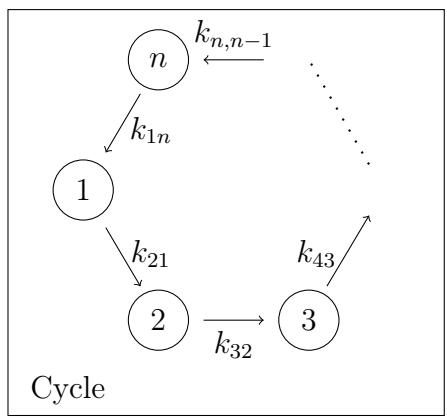
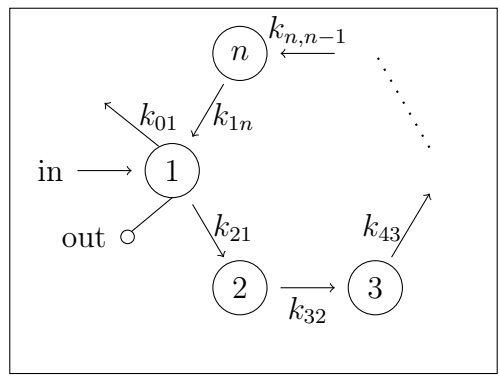

We recall several more definitions:
    \begin{definition}   \label{strongly connected}
    ~
    \begin{enumerate}
        \item 
        A directed graph is \textit{strongly connected} if there exists a directed path from each vertex to every other vertex.
         A directed graph $G$ with $n$ vertices is \textit{inductively strongly connected} with respect to vertex 1 if 
        there exists a reordering of the vertices that preserves vertex 1, such that, 
        for $i=1,2,\dots,n$, the induced subgraph $G_{\{1, 2, \dots, i\}}$ is strongly 
        connected.
        %connected for $i=1,\dots,|V|$ for some ordering of the vertices %$1,\dots,i$ which must start at vertex 1.}
        \item 
        A linear compartmental model $(G, In, Out, Leak)$ is \textit{strongly connected} (respectively, {\em inductively strongly connected} with respect to vertex 1) if \textit{G} is strongly connected (respectively, inductively strongly connected with respect to vertex 1).
    \end{enumerate}
    \end{definition}

For instance, for $n \geq 3$, the cycle model in Figure~\ref{fig:cycle-model} is strongly connected but not inductively strongly connected.
%,as the corresponding graph $G$ is the directed cycle of length $n$.
    
    \begin{definition}
        The \textit{compartmental matrix} of a linear compartmental model $(G, In, Out, Leak)$ with $n$ compartments is the $n \times n$ matrix $A$ given by: 
         $$ A_{ij}:= 
         \begin{cases}
-k_{0i} - \sum_{p: i\to p \in E} k_{pi} \qquad &\text{ if } i=j \text{ and } i \in Leak\\ 
-\sum_{p: i\to p\in E} k_{pi} 
&\text{ if } i=j \text{ and } i \not \in Leak\\
k_{ij} &\text{ if } j \to i \text{ is an edge of } G\\
0 &\text{ otherwise}
\end{cases} $$
        \label{Compartmental matrix definition}
    \end{definition}
    
      Furthermore, a linear compartmental model defines the following system of linear ODE's, with inputs $u_i(t)$ and outputs $y_i(t)$, where $ {x}(t)=(x_1(t), x_2(t), ~\dots ~,x_n(t))$
      is the vector of concentrations of the compartments at time $t$: 
  
\begin{align}
    \label{eq:ode}
    x'(t)~ =&~ Ax(t) + u(t) \\
    \label{eq:ode-2}
    y_i(t)~=&~ x_i(t) \quad \quad \quad \quad  \text{for}~ i \in Out~,
\end{align}

\noindent where $u_i(t)=0$ for $i\not\in In.$

\begin{notation}
Throughout this work, we let $(B)_{ji}$ denote the submatrix obtained from a matrix $B$ by removing row $j$ and column $i$.
\end{notation}

    \subsection{Input-Output Equations}
    For linear compartmental models, \textit{input-output equations} are equations that hold along every solution of ~\eqref{eq:ode}--\eqref{eq:ode-2}, where only the parameters  $k_{ij}$, the input variables $u_i$, the output variables $y_j$, and their derivatives are involved. The following general formulation of these equations comes from Meshkat, Sullivant, and Eisenberg~\cite[Theorem 2]{MSE15Two} (see also~\cite[Remark 2.7]{GHMS19One}):

        \begin{proposition}
    Let $M=(G, In, Out, Leak)$ be a linear compartmental model with n compartments and at least one input. Define $\partial I$ to be the $n \times n$ matrix in which each diagonal entry is the differential operator $d/dt$ and each off-diagonal entry is 0. Let A be the compartmental matrix. Then, the following equations (for $i\in Out$) are input-output equations for $M$:
    \begin{equation} \label{eq:i-o}
        \det(\partial I-A)y_i ~=~ \sum_{j\in In}(-1)^{i+j}\det\left((\partial I-A)_{ji}\right)u_j~,
    \end{equation}
    where $(\partial I-A)_{ji}$ is the % $(n-1) \times (n-1)$ 
    matrix obtained from $(\partial I-A)$ by removing row $j$ and column $i$. 
    \label{Input-Output Equation Definition}
    \end{proposition}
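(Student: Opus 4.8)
The plan is to treat $\partial I - A$ as a matrix with entries in the commutative ring $R := \mathbb{R}[k_{ij}][d/dt]$ of constant-coefficient linear differential operators, and then invoke the classical adjugate identity, which holds over any commutative ring. The observation that makes this legitimate is that the entries of $A$ are the constant rate parameters $k_{ij}$, so multiplication by an entry of $A$ commutes with $d/dt$; hence $R$ really is commutative and the usual $\det$ and adjugate formalism applies verbatim. (If the $k_{ij}$ depended on $t$, the ring would be noncommutative and this approach would fail.)

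First I would rewrite the state equation~\eqref{eq:ode} in operator form. Since the $i$-th diagonal entry of $\partial I$ is $d/dt$, moving $Ax$ to the left turns $x'(t) = Ax(t) + u(t)$ into the single matrix equation
\begin{equation*}
    (\partial I - A)\, x ~=~ u ,
\end{equation*}
an equality of vectors of functions in which $\partial I - A$ acts on $x$ coordinatewise. Next I would apply the adjugate identity $\operatorname{adj}(\partial I - A)\,(\partial I - A) = \det(\partial I - A)\, I$, valid in the matrix ring over $R$. Acting with $\operatorname{adj}(\partial I - A)$ on both sides of the operator equation above, and recalling that $\operatorname{adj}(B)_{ij} = (-1)^{i+j}\det\big((B)_{ji}\big)$, the $i$-th coordinate reads
\begin{equation*}
    \det(\partial I - A)\, x_i ~=~ \sum_{j=1}^{n} (-1)^{i+j} \det\big((\partial I - A)_{ji}\big)\, u_j .
\end{equation*}

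Finally I would specialize: for $j \notin In$ we have $u_j \equiv 0$, so the sum collapses to $j \in In$, and for $i \in Out$ we substitute $y_i = x_i$ from~\eqref{eq:ode-2}, recovering exactly~\eqref{eq:i-o}.

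The only point that needs genuine care — and hence the main obstacle — is justifying that this purely algebraic identity over $R$ descends to a valid identity of functions. This amounts to checking that sending a differential operator to its action on smooth functions is a ring homomorphism from $R$ into the $\mathbb{R}$-linear endomorphisms of the function space, compatibly with the matrix operations in the adjugate identity. Both hold precisely because the coefficients are constant, so that composition of operators matches multiplication in $R$; commutativity of $R$ is in turn what makes $\det$ and $\operatorname{adj}$ meaningful to begin with.
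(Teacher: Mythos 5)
Your proof is correct. The paper does not prove this proposition itself --- it quotes it from Meshkat--Sullivant--Eisenberg \cite[Theorem 2]{MSE15Two} --- and your argument (rewrite the system as $(\partial I - A)x = u$ over the commutative ring of constant-coefficient differential operators, apply the adjugate identity, then specialize using $u_j \equiv 0$ for $j \notin In$ and $y_i = x_i$ for $i \in Out$) is essentially the standard proof given in that reference, including the correct observation that constancy of the $k_{ij}$ is what makes the operator ring commutative and lets the algebraic identity act on solutions.
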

    
    From the input-output equations~\eqref{eq:i-o}, we derive a \textit{coefficient map}, denoted by $c:\mathbb{R}^{|E|+|Leak|}\to \mathbb{R}^m$, which evaluates each vector of parameters 
    $(k_{ij})_{(j,i) \in E,~{\rm or}~ i = 0~{\rm and}~j \in Leak}$ 
    at the vector of non-monic coefficients of the input-output equations.
    Here, $m$ denotes the number of such coefficients. To give a formula for  this number, directly from the model, remains an open question. 
    %Here, $|E|$ is the number of non-leak parameters, or $k_{ij}s$, where $i\neq 0$, and $|Leak|$ is the number of leaks, or $k_{0j}s$, (so, $|E|+|Leak|$ is the total number of parameters). As for the image, $k$ represents the number of coefficients in the input-output equation. 
%    Note that each coefficient is a polynomial in the parameters, e.g., $k_{21}k_{43}+k_{01}k_{13}$. 
    %\textit{A priori, there is no way to know for sure how many coefficients there will be}.

     \begin{example} 
    \label{ex:main}
    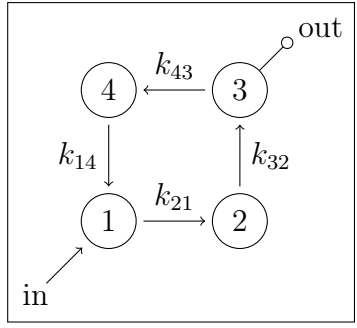
\begin{figure}[ht]
    \centering
    \begin{tikzpicture}[scale=1.8,
    compartment/.style={circle, draw=black, minimum size=7mm}]
 %-----------------------------
 % CYCLE-4
 %-----------------------------
    %COMPARTMENTS%
    \node[compartment]  (one) {1};
    \node[compartment]  (two) [right=of one] {2};
    \node[compartment]  (three) [above=of two] {3};
    \node[compartment]  (four) [above=of one] {4};
    \node[] (output) [node distance=6mm, above right=of three] {};
    \node[] (input) [node distance=8mm, below left=of one] {};

    %ARROWS & RATES%
    \draw[->] ([xshift=.5mm]one.east) -- ([xshift=-.5mm]two.west) node[midway,above] {$k_{21}$};
    \draw[->] ([yshift=.5mm]two.north) -- ([yshift=-.5mm]three.south) node[midway,right] {$k_{32}$};
    \draw[->] ([xshift=-.5mm]three.west) -- ([xshift=.5mm]four.east) node[midway,above] {$k_{43}$};
    \draw[->] ([yshift=-.5mm]four.south) -- ([yshift=.5mm]one.north) node[midway,left] {$k_{14}$};
    
    %OUTPUT
    \draw[-o] (three.north east) -- (output) node[xshift=-2mm,right] {out};
    %INPUT
   	\draw[<-] ([xshift=-.5mm,yshift=-.5mm]one.south west) -- (input) node[] {in};

    % BOX
    \draw ([xshift=-6mm,yshift=-6mm]one.south west) rectangle ([xshift=7mm,yshift=5mm]three.north east);
 \end{tikzpicture}
    \caption{A 4-compartment cycle model with $In=\{1\}$, $Out=\{3\}$, and $Leak=\emptyset$.}
    \label{fig:3comp-ex}
\end{figure}

    For the cycle model shown in Figure~\ref{fig:3comp-ex}, 
    %By Definition~\ref{Compartmental matrix definition}, 
    the compartmental matrix is: 
    \[
        A=\begin{bmatrix}
        -k_{21} & 0 & 0 & k_{14}\\
        k_{21} & -k_{32}& 0 & 0\\
        0 & k_{32} & -k_{43}& 0\\
        0 & 0 & k_{43} & -k_{14}\\
        \end{bmatrix}.
    \]
%    By Proposition \ref{Input-Output Equation Definition}, 
    We have $|Out|=1$, so 
    there is a single input-output equation~\eqref{eq:i-o}:
    %, as $|Out|=1$, only one term on the right hand side since $|In|=1$, and the input-output equation is: 
    
    $$
        \det \left( 
        \begin{bmatrix}
        \frac{d}{dt}+k_{21} & 0 & 0 & -k_{14}\\
        -k_{21} & \frac{d}{dt}+k_{32}& 0 & 0\\
        0 & -k_{32} & \frac{d}{dt}+k_{43}& 0\\
        0 & 0 & -k_{43} & \frac{d}{dt}+k_{14}\\
        \end{bmatrix}
        \right)
        y_3 
        ~=~ 
        \det
        \left(  
        \begin{bmatrix}
        -k_{21} & \frac{d}{dt}+k_{32} & 0\\
        0 & -k_{32} & 0\\
        0 & 0 & \frac{d}{dt}+k_{14}\\
        \end{bmatrix}
        \right)u_1~, 
    $$
    which simplifies as follows: 
    \begin{multline*}
        y_3^{(4)} + 
        y_3^{(3)} (k_{14} + k_{21} + k_{32} + k_{43}) +
        y_3^{(2)} (k_{14} k_{21} + k_{14} k_{32} + k_{21} k_{32} + k_{14} k_{43} + k_{21} k_{43} + k_{32} k_{43})\\
        +y_3^{(1)} (k_{14} k_{21} k_{32} + k_{14} k_{21} k_{43} + k_{14} k_{32} k_{43} + k_{21} k_{32} k_{43})
        ~=~u_1^{(1)} (k_{21} k_{32}) + 
        u_1 (k_{14} k_{21} k_{32})~.
    \end{multline*}
    % \begin{multline*}
    %     {\small  y^{(3)}_1+(k_{01}+k_{13}+k_{21}+k_{32})y^{(2)}_{1}+(k_{01}k_{13}+k_{01}k_{32}+k_{13}k_{21}+k_{13}k_{32}+k_{21}k_{32})y^{(1)}_1+k_{01}k_{13}k_{32}y_1}\\
    %     \small   =u^{(2)}_1+(k_{13}+k_{32})u^{(1)}_1+k_{13}k_{32}u_1.
    % \end{multline*}
    
   \noindent Thus, the coefficient map $c: \mathbb{R}^4 \to \mathbb{R}^5$ is given by:   
\begin{align*}
(k_{21}, k_{32}, k_{43}, k_{14}) \mapsto 
 (k_{14} + k_{21} + k_{32} + k_{43}, 
   ~k_{14} k_{21} + k_{14} k_{32} + k_{21} k_{32} + k_{14} k_{43} + k_{21} k_{43} + k_{32} k_{43}, \\
   ~k_{14} k_{21} k_{32} + k_{14} k_{21} k_{43} + k_{14} k_{32} k_{43} + k_{21} k_{32} k_{43}, 
   ~k_{21} k_{32},
   ~k_{14} k_{21} k_{32}). 
\end{align*}

\end{example}

    \subsection{Identifiability}
    %Next we turn to a key property of a model. 
    A model is (structurally) \textit{identifiable} if all parameters $k_{ij}$ can be recovered from  data~\cite{chis2011,miao}. More precisely, for an identifiable model, we can derive the values of the parameters from perfect (noise-free) input-data and output-data (arising from generic initial conditions). Returning to the injection analogy from the last subsection, the question is: If we know the amount of injected drug and the amount of drug still present in the injection site at any time, \textit{can we determine the rate at which the drug transfers from one organ to the next?} 
    
    There are several kinds of identifiability, but we focus on \textit{generic local identifiability}. 
    To determine whether a model is generically locally identifiable, we analyze coefficient maps.  That is, the following definition of identifiability (for strongly connected models with at least one input) is equivalent to the definition above; this equivalence was proved 
    %Theorem 6.6
    by Ovchinnikov, Pogudin, and Thompson~\cite[Corollary 3.2]{Ovchinnikov-Pogudin-Thompson}\footnote{Ovchinnikov {\em et al.} considered identifiability over $\mathbb{C}$, whereas we consider identifiability over $\mathbb{R}$, but these are equivalent in our setting (cf.\ the discussion in~\cite[\S 3]{baaijens-draisma}).}. 
    
   \begin{definition}
    Let $M=(G, In, Out, Leak)$ be a linear compartmental model 
    that is strongly connected and has at least one input.  Let $c:~ \mathbb{R}^{|E|+|Leak|}\to \mathbb{R}^m$ denote the coefficient map arising from the input-output equations~\eqref{eq:i-o}. Then $M$ is 
    \textit{generically locally identifiable} if, 
    outside a set of measure zero, every point in $\mathbb{R}^{|E|+|Leak|}$ is in some open neighborhood $U$ such that the restriction $c|_{U}: U\to \mathbb{R}^k$ is one-to-one.
%    \begin{enumerate}
%        \item \textit{globally identifiable} if \textit{c} is one-to-one, and is \textit{generically globally identifiable} if \textit{c} is one-to-one outside a set of measure zero. 
%        \item \textit{locally identifiable} if around every point in $\mathbb{R}^{|E|+|Leak|}$ there is an open neighborhood $U$ such that $c: U\to \mathbb{R}^k$ is one-to-one, and is \textit{generically locally identifiable} if, outside a set of measure zero, every point in $\mathbb{R}^{|E|+|Leak|}$ has such an open neighborhood $U$. 
%        \item \textit{unidentifiable} if \textit{c} is infinite-to-one. 
%    \end{enumerate}
    \label{Indentifiability definition}
    \end{definition}
    
Next, we state some prior results that we will use.  The following, due to Meshkat, Sullivant, and Eisenberg~\cite[Proposition 2]{MSE15Two}, 
 is key to many of our proofs.
    \begin{proposition} 
    A linear compartmental model (G, In, Out, Leak), with $G=(V,E)$, is generically locally identifiable if and only if the rank of the Jacobian matrix of its coefficient map c, when evaluated at a generic point, is equal to $|E|+|Leak|$. \label{prop:rank}
    \end{proposition}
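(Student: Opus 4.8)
The plan is to treat Proposition~\ref{prop:rank} as the standard fact about when a smooth map is generically locally injective, specialized to the polynomial coefficient map $c\colon \mathbb{R}^{N}\to\mathbb{R}^{m}$, where I abbreviate $N:=|E|+|Leak|$. The two observations that make this work are that $c$ is a \emph{polynomial} map (its coordinates are the non-monic coefficients of the input-output equations~\eqref{eq:i-o}, which are polynomials in the $k_{ij}$), hence real-analytic and $C^\infty$, and that generic local identifiability in Definition~\ref{Indentifiability definition} is \emph{by definition} the assertion that $c$ is locally one-to-one outside a measure-zero set. So the content is purely the equivalence ``$c$ generically locally injective $\iff$ generic Jacobian rank $=N$.''

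First I would record the rank behavior of the Jacobian $Dc$. Let $r$ denote its maximal (equivalently, generic) rank over $\mathbb{R}^{N}$. Each $r\times r$ minor of $Dc$ is a polynomial, and by definition of $r$ at least one is not identically zero; hence the locus $Z:=\{p:\operatorname{rank} Dc(p)<r\}$ lies in the common zero set of all $r\times r$ minors, a proper real-algebraic subset, and therefore has Lebesgue measure zero. Thus $U_{0}:=\mathbb{R}^{N}\setminus Z$ is an open, full-measure set on which $\operatorname{rank} Dc\equiv r$. Since the domain has dimension $N$ we always have $r\le N$, and the claim is exactly that $M$ is identifiable iff $r=N$.

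For the easy direction $r=N\Rightarrow$ identifiable, I would note that on $U_{0}$ the Jacobian has full column rank $N$, so at each $p\in U_0$ one can choose $N$ of the output coordinates whose corresponding $N\times N$ minor is nonzero; the resulting map $\mathbb{R}^{N}\to\mathbb{R}^{N}$ has invertible Jacobian, so by the Inverse Function Theorem it is a local diffeomorphism, in particular locally one-to-one near $p$, and hence so is $c$. As $U_{0}$ has full measure, this is precisely generic local identifiability.

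The converse is the delicate direction, and I expect it to be the main obstacle, because the Inverse Function Theorem alone is not enough: I would instead prove the contrapositive $r<N\Rightarrow$ not identifiable using the \emph{Constant Rank Theorem}. The point is that on $U_{0}$ the rank is genuinely \emph{constant}, equal to $r$, so near each $p\in U_{0}$ there are local coordinates in which $c$ becomes the projection $(x_{1},\dots,x_{N})\mapsto(x_{1},\dots,x_{r})$; its fibers are then smooth submanifolds of positive dimension $N-r\ge 1$. Consequently $c$ is not injective on any neighborhood of any point of $U_{0}$, and since $U_{0}$ has full measure the set of points lacking an injective neighborhood has full measure, so $M$ is not generically locally identifiable. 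Combining the two implications gives the stated equivalence. (Alternatively the whole argument can be phrased algebro-geometrically: $r$ equals the dimension of the Zariski closure of the image of $c$, the generic fiber has dimension $N-r$, and $c$ is generically finite-to-one—equivalently generically locally injective—iff $N-r=0$.) The care that this direction demands is exactly in verifying that the rank is \emph{constant} on an open set, not merely maximal at a point, so that the Constant Rank Theorem applies and genuinely produces positive-dimensional fibers.
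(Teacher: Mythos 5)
Your argument is correct, but there is nothing in the paper to compare it against: Proposition~\ref{prop:rank} is not proved here at all --- it is quoted as a prior result of Meshkat, Sullivant, and Eisenberg \cite[Proposition 2]{MSE15Two}, and the paper simply cites it. Your write-up is essentially the standard proof that underlies that reference. The two ingredients you isolate are exactly the right ones: since the coordinates of $c$ are polynomials, the set where the Jacobian rank drops below its maximal value $r$ is a proper algebraic subset, hence has measure zero and its complement $U_0$ is open with rank constantly equal to $r$ there; on $U_0$ the Inverse Function Theorem (applied to a full-rank $N\times N$ coordinate projection) gives local injectivity when $r=N$, and the Constant Rank Theorem produces positive-dimensional local fibers, hence failure of local injectivity at \emph{every} point of the full-measure set $U_0$, when $r<N$. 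You are also right that the delicate point is the converse, and that invoking constancy of the rank on an open set (not merely maximality at a point) is what makes the Constant Rank Theorem applicable; a proof that only used the rank at a single generic point would be incomplete. One small remark: the proposition as stated in the paper suppresses the standing hypotheses (strong connectivity, at least one input) under which Definition~\ref{Indentifiability definition} is the operative notion of identifiability, but since your proof works directly with that coefficient-map definition, this does not affect your argument.
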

    
%    \begin{remark}
%    One important thing to note is that generically local identifiability \textit{does not} guarantee the model will be locally identifiable. There may be values of certain parameters which make the model unidentifiable. For further reading on this issue, see \cite{GNA17Three}.  
%    \end{remark}

\begin{example}[Example~\ref{ex:main}, continued] \label{ex:main-2}
We show that the model from Example~\ref{ex:main} is generically locally identifiable, by
proving that the Jacobian matrix of its coefficient map generically has rank $|E|+|Leak| = 4$. 
%Let $e_1,e_2,e_3,\kappa$ 
Denote the first four non-monic coefficients as follows:
\begin{align*}
    e_1~&=~k_{14} + k_{21} + k_{32} + k_{43}\\
    e_2~&=~k_{14} k_{21} + k_{14} k_{32} + k_{14} k_{43} + k_{21} k_{32} + k_{21} k_{43}+ k_{32} k_{43}\\
    e_3~&=~k_{14} k_{21} k_{32} + k_{14} k_{21} k_{43} + k_{14} k_{32} k_{43} + k_{21} k_{32} k_{43} \\
    \kappa~&=~k_{21}k_{32}~.
\end{align*}
% Take the $(4\times 4)$-submatrix of the Jacobian matrix with rows indexed by $e_1,e_2,e_3,\kappa$ and columns indexed by $k_{21},k_{32},k_{43},k_{14}$:
% {\footnotesize
% \[
%  \left[ \begin{array}{cccc} 1&1&1&1\\ k_{14}+k_{32}+k_{43}&k_{14}+k_{21}+k_{43}&k_{14}
% +k_{21}+k_{32}&k_{21}+k_{32}+k_{43} \\ 
% k_{14}\,k_{32}+k_{14}\,k_{43}
% +k_{32}\,k_{43}&k_{14}\,k_{21}+k_{14}\,k_{43}+k_{21}\,k_{43}&k_{14}\,k_{21}+k_{14}\,
% k_{32}+k_{21}\,k_{32}&k_{21}\,k_{32}+k_{21}\,k_{43}+k_{32}\,k_{43}\\ k_{32}&k_{21}&0&0
% \end{array} \right] .
% \]
% }
For the $(4\times 4)$-submatrix of the Jacobian matrix arising from rows indexed by $e_1,e_2,e_3,\kappa$ and columns $k_{21},k_{32},k_{43},k_{14}$, the determinant is a nonzero polynomial in the $k_{ij}$'s:
\[
-(k_{32}-k_{21}) 
(k_{14}-k_{43}) 
\bigg(
(k_{14}-k_{32})(k_{32}-k_{43})
+k_{14} k_{21}
-k_{21}^2
-k_{21} k_{32}
+k_{21} k_{43}
\bigg)~.
% (
% +k_{14} k_{32}
% -k_{32}^2
% -k_{14} k_{43}
% +k_{32} k_{43}
% +k_{14} k_{21}
% -k_{21}^2
% -k_{21} k_{32}
% +k_{21} k_{43}
% ),
\]
\noindent %which is generically nonzero. 
So, by Proposition~\ref{prop:rank}, the model is generically locally identifiable. 
\end{example}

% \begin{remark}
% In Proposition~\ref{Output-Cycle IDable}, we generalize Example~\ref{ex:main-2}: We will show that \emph{any} $n$-compartment cycle model with no leaks, exactly one input and exactly one output is generically locally identifiable.
% \end{remark}

The following result combines~\cite[Proposition~4.1,  Proposition~4.6, and Theorem 4.3]{GHMS19One}. 
    \begin{proposition}  \label{prop:add-leak-in-out}
    Let $M$ be a linear compartmental model that is strongly connected and has at least one input.  Assume that one of the following holds:
    \begin{enumerate}
        \item $M$ has no leaks, and $\widetilde M$ is a model obtained from $M$ by adding one leak; 
        \item $\widetilde{ {M}}$ is a model obtained from $M$ by adding one input or one output; or
        \item 
        $M$ has an input, an output, and a leak in a single compartment (and no other inputs, outputs, or leaks), and $\widetilde M$ is the model obtained from $M$ by removing the leak.
        \end{enumerate}
    Then, if $M$ is generically locally identifiable, then so is $\widetilde M$. 
        \end{proposition}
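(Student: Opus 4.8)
The plan is to reduce all three cases to the Jacobian criterion of Proposition~\ref{prop:rank}, and to track exactly how each operation changes the input-output equations~\eqref{eq:i-o}, hence the coefficient map $c$, its list of coefficients (the rows of the Jacobian), and its parameters (the columns). Throughout I would write $A_0$ for the compartmental matrix of the leak-free model and use the structural fact that a compartmental matrix with no leaks has zero column sums, so that $\det A_0 = 0$ identically.

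Case 2 (adding one input or one output) is the easiest and I would dispatch it first. Adding an output at a compartment $i$ introduces one entirely new equation~\eqref{eq:i-o} (for the index $i$) while leaving the existing equations untouched; adding an input at $j$ leaves every left-hand side $\det(\partial I - A)y_i$ and every preexisting right-hand summand intact and merely appends a new term involving $u_j$. In both situations the parameter vector, and hence the set of columns of the Jacobian, is unchanged, while the list of coefficients only grows. Thus $\widetilde M$'s Jacobian contains $M$'s Jacobian as a row-submatrix, so its rank is at least $\operatorname{rank} J(c) = |E|+|Leak|$; since the number of columns is still $|E|+|Leak|$, equality holds and Proposition~\ref{prop:rank} gives identifiability of $\widetilde M$.

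Case 1 (adding one leak, at a compartment $\ell$) adds a single parameter $k_{0\ell}$, so $\widetilde M$'s Jacobian has $|E|+1$ columns. Writing $\tilde A = A_0 - k_{0\ell}e_\ell e_\ell^\top$, setting $k_{0\ell}=0$ recovers $A_0$, so every coefficient of $\widetilde M$ restricts on the slice $\{k_{0\ell}=0\}$ to the corresponding coefficient of $M$; in particular, the edge-parameter columns of $J(\widetilde c)$, restricted to the rows coming from $M$'s coefficients and evaluated on the slice, reproduce $J(c)$, which has rank $|E|$. It remains to produce one more independent direction, for which I would use the constant (in $\partial$) coefficient of the left-hand side, namely $(-1)^n\det\tilde A$. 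Because $\det A_0 = 0$ and $\det$ is affine in the $(\ell,\ell)$ entry, $\det\tilde A = -k_{0\ell}\det\big((A_0)_{\ell\ell}\big)$, so this coefficient vanishes identically on the slice, killing all its edge-parameter derivatives there, while its $k_{0\ell}$-derivative equals $(-1)^{n+1}\det\big((A_0)_{\ell\ell}\big)$, which is generically nonzero for a strongly connected graph (a matrix-tree-type argument). Bordering the rank-$|E|$ block with this row and the $k_{0\ell}$ column yields a block-triangular $(|E|+1)\times(|E|+1)$ submatrix with generically nonzero determinant, so $\operatorname{rank} J(\widetilde c) = |E|+1$ and $\widetilde M$ is identifiable.

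Case 3 (removing the leak when $In=Out=Leak=\{1\}$) is the genuine obstacle, since here a parameter is \emph{deleted}: we must deduce full rank on a lower-dimensional domain, and in general restricting a generically immersive map to a coordinate slice and discarding a column can destroy local injectivity. The special structure that rescues us is that, writing $A = A_0 - k_{01}e_1 e_1^\top$ and $B_0 := \partial I - A_0$, the leak touches only the $(1,1)$ entry, which is precisely the entry deleted in forming the right-hand side $(\partial I - A)_{11}$. Hence the right-hand polynomial of $M$ equals $\det\big((B_0)_{11}\big)$, which is exactly the right-hand polynomial of $\widetilde M$ and is independent of $k_{01}$, while by affineness of $\det$ in the $(1,1)$ entry the left-hand polynomial of $M$ is $\det B_0 + k_{01}\det\big((B_0)_{11}\big)$, i.e. the left-hand polynomial of $\widetilde M$ sheared by $k_{01}$ times its right-hand polynomial. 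Thus the coefficient map of $M$ is obtained from that of $\widetilde M$ by the invertible, $k_{01}$-dependent shear that adds $k_{01}$ times each right-hand coefficient to the matching left-hand coefficient. I would then finish via local injectivity directly: for generic $k$, choose $k_{01}$ so that $(k,k_{01})$ lies in a neighborhood on which $c$ is one-to-one; if $\widetilde c(k')=\widetilde c(k)$ for $k'$ near $k$, the shear relation forces $c(k',k_{01}) = c(k,k_{01})$, whence $k'=k$. This shows $\widetilde c$ is generically locally one-to-one, i.e. $\widetilde M$ is identifiable, and isolates the structural identity (the leak enters only the deleted row and column, and only affinely) as the crux of the whole proposition.
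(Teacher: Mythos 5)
Your argument is correct in all three cases, but note that the paper itself gives no proof of this proposition: it is stated as a combination of cited results (Propositions~4.1 and~4.6 and Theorem~4.3 of~\cite{GHMS19One}), so any comparison is really with that reference rather than with this paper. Your reconstruction is self-contained and isolates the right structural facts. Case~2 is handled exactly as one would expect (the coefficient list only grows while the parameter list is fixed, so the Jacobian gains rows and keeps its columns). For Case~1, the key points --- that $\det A_0=0$ because a leak-free compartmental matrix has zero column sums, that $\det\tilde A=-k_{0\ell}\det\bigl((A_0)_{\ell\ell}\bigr)$ by multilinearity in the $\ell$-th column, and that $\det\bigl((A_0)_{\ell\ell}\bigr)$ is a nonzero polynomial by the matrix-tree theorem together with strong connectivity --- are precisely the ingredients needed, and your bordering of a generically nonsingular $|E|\times|E|$ block by the constant-term row and the $k_{0\ell}$ column, evaluated on the slice $k_{0\ell}=0$, legitimately certifies that the corresponding $(|E|+1)$-minor is a nonzero polynomial, hence generically nonzero. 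For Case~3 you correctly identify the crux: since $In=Out=Leak=\{1\}$, the parameter $k_{01}$ sits only in the $(1,1)$ entry, so the right-hand side $\det\bigl((\partial I-A)_{11}\bigr)$ is $k_{01}$-free and the left-hand side is $\det B_0+k_{01}\det\bigl((B_0)_{11}\bigr)$; the resulting shear relation between $c$ and $\widetilde c$, combined with a Fubini-type choice of $k_{01}$ outside the bad measure-zero set, gives local injectivity of $\widetilde c$ directly from that of $c$. This is a clean and complete route; the only cosmetic caveat is that in the shear the coefficient paired with $y^{(n-1)}$ is $k_{01}$ times the \emph{monic} leading coefficient $1$ of the right-hand side, which is a constant rather than a coordinate of $\widetilde c$, but this does not affect the implication $\widetilde c(k')=\widetilde c(k)\Rightarrow c(k',k_{01})=c(k,k_{01})$.
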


\begin{remark} \label{rmk:0-constant-term}
Most models we consider have at most one leak. So, by Proposition~\ref{prop:add-leak-in-out}, 
to prove identifiability, it will suffice to analyze the input-output equations of models without leaks.  For such models, in the left-hand side of the input-output equation~\eqref{eq:i-o}, the constant term is 0 (e.g., see Example~\ref{ex:main}).  Indeed, this term is, up to sign, the determinant of the compartmental matrix $A$, which (if there are no leaks) has 0 column sums. 
\end{remark}

% Need to decide how to work this in. A little odd where I've placed it, because the last remark says that we are only focused on models with at most one leak.
We end this section by 
recalling from~\cite{MeshkatSullivant, MSE15Two}
an important class of models that are identifiable when all leaks (or all but one) are removed (see Proposition~\ref{prop:id-cycle-model-0-1-leaks}).

\begin{definition}\label{def:id-cycle-model}
A linear compartmental model $M=(G,In,Out,Leak)$, with $G=(V,E)$, is an \textit{identifiable cycle model} if
   (1)~$G$ is strongly connected, 
(2)~$In=Out=\{1\}$, 
(3)~$Leak=V$, and
(4)~the dimension of the image of the coefficient map is $|E|+1$.
%\begin{enumerate}
%    \item $G$ is strongly connected,
%    \item $In=Out=\{1\}$
%    \item $Leak=V$,
%    \item The dimension of the image of the coefficient map $c$ is $|E|+1$.
%\end{enumerate}
\end{definition}

When conditions (1)--(3)  
in Definition~\ref{def:id-cycle-model} hold, and also the equality $|E|=2|V|-2$, 
then a sufficient criterion for condition (4) to hold 
is that the graph $G$ is inductively strongly connected with respect to vertex 1~\cite{MeshkatSullivant} (c.f.~\cite[Remark 1]{MSE15Two}).

% {\color{red}
% Tell the reader why we're defining this in the first place (point to Prop~\ref{prop:id-cycle-model-0-1-leaks}).
% }
% {\color{blue}
% \begin{proposition}\cite[Theorem 1]{MSE15Two}\label{prop:id-cycle-model-one-leak}
% Let $M$ be an identifiable cycle model. If the model is changed to have
% exactly one leak, then the resulting model is locally identifiable.
% \end{proposition}
% }

\subsection{Elementary Symmetric Polynomials} \label{sec:elem-sym-poly}
We will use the following lemma to prove identifiability results in the next section.

\begin{lemma} \label{lem:elem-sym-poly}
Let $n$ be a positive integer.
For $1 \leq m \leq n$, let $e_m$ be the $m$-th elementary symmetric polynomial on a set of variables $X=\{x_1, x_2,  \dots ,x_n\}$. 
Let $J(V)$ denote the Jacobian matrix of $V:=\{e_1, e_2, \dots, e_n\}$ with respect to $x_1,x_2, \dots ,x_n$.
Then $\det J(V)$ is a nonzero polynomial in the $x_i$'s.
%, {\color{blue} and therefore every square submatrix of $J(V)$ has nonzero determinant.} {\color{violet} Do we use this last part?  If no, delete.}
\end{lemma}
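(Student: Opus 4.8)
The plan is to show that $\det J(V)$ equals, up to sign, the Vandermonde determinant $\prod_{1\le i<j\le n}(x_i-x_j)$, which is manifestly a nonzero polynomial, being a product of distinct nonzero linear forms in the integral domain $\mathbb{R}[x_1,\dots,x_n]$.

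First I would record the partial-derivative formula $\partial e_m/\partial x_i = e_{m-1}(X\setminus\{x_i\})$, which follows by inspecting which monomials of $e_m$ contain the factor $x_i$: differentiating in $x_i$ keeps exactly those monomials and strips off $x_i$, leaving the sum of all size-$(m-1)$ products from the remaining variables. Writing $E$ for the $n\times n$ matrix with $(m,i)$-entry $e_{m-1}(X\setminus\{x_i\})$ (with the convention $e_0=1$), this matrix is exactly $J(V)$, so it suffices to compute $\det E$.

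The key step is a matrix factorization coming from the generating identity $\prod_{j\ne i}(t+x_j)=\sum_{k=0}^{n-1} e_k(X\setminus\{x_i\})\,t^{n-1-k}$. Evaluating at $t=-x_{i'}$ gives $\sum_{m=1}^n (-x_{i'})^{n-m}\, e_{m-1}(X\setminus\{x_i\}) = \prod_{j\ne i}(x_j-x_{i'})$, and the crucial observation is that the right-hand side vanishes whenever $i'\ne i$, since the product then contains the factor $x_{i'}-x_{i'}=0$. Hence, letting $P$ be the matrix with $(i',m)$-entry $(-x_{i'})^{n-m}$, the product $PE$ is diagonal with entries $\prod_{j\ne i}(x_j-x_i)$. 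Since $P$ is a Vandermonde matrix in the $-x_{i'}$, with $\det P=\pm\prod_{i<i'}(x_{i'}-x_i)$, and $\prod_i\prod_{j\ne i}(x_j-x_i)=\pm\prod_{i<j}(x_j-x_i)^2$, dividing yields $\det E=\pm\prod_{i<j}(x_j-x_i)$, which is nonzero, completing the argument.

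The only delicate point is setting up the factorization correctly: one must track the index conventions and signs in the generating identity and recognize that evaluating $\prod_{j\ne i}(t+x_j)$ at the \emph{other} roots $-x_{i'}$ forces the off-diagonal entries of $PE$ to vanish. Everything else is routine bookkeeping. As an alternative, one can avoid the explicit computation entirely: in characteristic zero the Jacobian criterion for algebraic independence guarantees that $\det J(V)$ is nonzero precisely because $e_1,\dots,e_n$ are algebraically independent over $\mathbb{R}$; I would mention this only as a shorter but less self-contained route.
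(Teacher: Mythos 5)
Your proposal is correct and follows the same overall strategy as the paper: compute $\partial e_m/\partial x_i = e_{m-1}(X\setminus\{x_i\})$, identify $J(V)$ as the matrix of these truncated elementary symmetric polynomials, and show its determinant is $\pm\prod_{i<j}(x_i-x_j)$. The one difference is that the paper simply cites an identity from the proof of Theorem~5.1 of Meshkat--Sullivant--Eisenberg for the Vandermonde evaluation, whereas you actually prove it via the factorization $PE = \mathrm{diag}\bigl(\prod_{j\ne i}(x_j-x_i)\bigr)$ coming from evaluating $\prod_{j\ne i}(t+x_j)$ at $t=-x_{i'}$; this is a clean and correct self-contained argument (the division by $\det P$ is legitimate in the fraction field since the Vandermonde is a nonzero polynomial). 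Your closing alternative via the Jacobian criterion for algebraic independence is exactly the paper's Remark~\ref{rem:alternate-proof}.
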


 \begin{proof}
 %Let $\{\hat{i}\}:=[n]\setminus\{i\}$ {\color{blue} double-check this notation is used} and $\{\hat{x}_i\}:=X \setminus\{x_i\}$. Then 
 For $1 \leq m \leq n$, the $m$-th elementary symmetric polynomial on $X$ is as follows: \\
 \begin{center}
     $e_m ~=~ 
     \sum\limits_{j_{1}<j_{2}<\dots<j_m} x_{j_1}...x_{j_m}
     ~=~
     \sum\limits_{\substack{j_2<...<j_m \\ j_k \neq i}} x_i (x_{j_2}...x_{j_m}) + 
     \sum\limits_{\substack{ l_1<...<l_m \\ l_k \neq i} } x_{l_1}...x_{l_m}$~,
 \end{center}
 for any $1 \leq i \leq n$.
 Thus, taking the partial derivative with respect to $x_i$ yields: 
 \begin{equation}
   \dfrac{\partial e_m}{\partial x_i}~=~ \sum\limits_{
         \substack{j_2<...<j_m \\ j_k \neq i}        
         } x_{j_2}...x_{j_m}~=~e_{m-1}\{\hat{x_i}\}~,
 \end{equation}
 where $e_{m-1}\{\hat{x_i}\}$ is the $(m-1)$-st elementary symmetric polynomial on the set $X 
 \smallsetminus \{x_i\}$. Hence, the Jacobian matrix of $V$ is as follows: 
 \begin{center}
 $J(V)=
 \begin{bmatrix}
 1 & 1 &\dots&1 \\
 e_1\{\hat{x_1}\}& e_1\{\hat{x_2}\} & \dots   & e_1\{\hat{x_n}\} \\
 \vdots & \vdots & \ddots & \vdots \\
 e_{n-1}\{\hat{x_1}\} & e_{n-1}\{\hat{x}_2\}& \dots & e_{n-1}\{\hat{x_n}\}\\
 \end{bmatrix} $.
 \end{center}
 Finally, by equation~(5) in the proof of~\cite[Theorem 5.1]{MSE15Two}, 
 %It will now suffice to show that $\det(J(V)) \neq 0$. We do so similarly as the proof of Theorem 5.1 in \cite{GNA17Three} by showing that 
 $\det(J(V))$ equals, up to sign, the \textit{Vandermonde polynomial} on $X$, which is nonzero:
 %\begin{equation*}
 $        \det(J(V))~=~ \pm \prod_{1\leq i < j \leq n}(x_i-x_j). $ %~. %\label{Vader Poly}
 %\end{equation*}
 \end{proof}

% {\color{blue}
\begin{remark} \label{rem:alternate-proof}
An alternate proof of Lemma~\ref{lem:elem-sym-poly} is as follows.  The Jacobian matrix of $n$ polynomials in $n$ unknowns is nonzero if and only if the polynomials are algebraically independent~\cite[Theorem 2.3]{EhrenborgRota}. 
And the algebraic independence of the elementary symmetric polynomials is known~\cite[pg.\ 20]{Macdonald}.
\end{remark}
% }

\section{Results} \label{sec:results}

In this section we present our main results. 
%In Section 3.1, we discuss elementary symmetric polynomials, as they are tools we use in our proofs. 
In Section~\ref{sec:leak}, we recall a conjecture on removing leaks~\cite{GHMS19One} and then prove the conjecture for three infinite families of models (Theorem~\ref{Catenary, cycle, mam., w/wo leak, ID}). 
Next, in Section~\ref{sec:cycle}, we investigate a new operation: moving the input and output.  
We prove that 
every cycle model with up to~one leak, at least~one input, and at least~one output is identifiable (Theorem \ref{Big Cycle Model theorem }). 
%  {\color{blue} 
 In Section~\ref{sec:add-leaks-to-cycle},
 we give a partial converse to this result by analyzing how the number and location of leaks affects whether a cycle model is identifiable.
%  }
%  {\color{blue} 
 Lastly,
%  } 
 in Section~\ref{sec:new-edges},
 we show that adding certain edges to cycle models preserves identifiability (Theorem~\ref{thm:fin-wing-partial}).

\subsection{Adding or Removing a Leak} \label{sec:leak}
%Our next result pertains to more than just cycle  models: it extends to catenary models (Figure~\ref{fig:cat-model}) and mammillary models (Figure~\ref{fig:mamm-model}) as well.
The following was posed by Gross, Harrington, Meshkat, and Shiu \cite[Conjecture 4.5]{GHMS19One}:

\begin{conjecture}[Removing a leak] \label{conj:remove-leak}
Let $\widetilde{M}$ be a linear compartmental model that is strongly connected and has at least one input and exactly one leak. If $\widetilde{M}$ is generically locally identifiable, then so is the model \textit{M} obtained from $\widetilde{M}$ by removing the leak. 
\end{conjecture}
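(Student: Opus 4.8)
The plan is to translate everything into Jacobian ranks via Proposition~\ref{prop:rank} and to compare the coefficient maps of $M$ and $\widetilde M$. Write $\ell$ for the unique leak compartment of $\widetilde M$, let $A_0$ be the (no-leak) compartmental matrix of $M$, and let $A = A_0 - k_{0\ell}E_{\ell\ell}$ be that of $\widetilde M$, where $E_{\ell\ell}$ has a single $1$ in position $(\ell,\ell)$. Denote by $\tilde c$ the coefficient map of $\widetilde M$, on the variables $(k, k_{0\ell})$ with $k = (k_{ij})_{(j,i)\in E}$, and by $c$ that of $M$, on the variables $k$. By construction $c$ is obtained from $\tilde c$ by setting $k_{0\ell} = 0$ and deleting the one coordinate recording the constant term of the left-hand side of~\eqref{eq:i-o}; that coordinate is $\pm\det A$, which vanishes identically once $k_{0\ell}=0$ by Remark~\ref{rmk:0-constant-term}. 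First I would exploit this to write the Jacobian of $\tilde c$, restricted to $\{k_{0\ell}=0\}$ and with the constant-term row placed last, in block form
\[
J\tilde c\big|_{k_{0\ell}=0} ~=~ \begin{pmatrix} J_k c & v \\ \mathbf 0 & w \end{pmatrix},
\]
where $J_k c$ is the Jacobian of $c$. The bottom-left block is zero because the constant-term coordinate vanishes on the whole hyperplane, and the scalar $w = \partial(\pm\det A)/\partial k_{0\ell} = \mp\det\!\big((A_0)_{\ell\ell}\big)$ is, by a matrix-tree argument, a weighted enumeration of spanning arborescences and hence a nonzero polynomial whenever $G$ is strongly connected.

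Second, with $w \neq 0$ the block form gives $\operatorname{rank}\big(J\tilde c|_{k_{0\ell}=0}\big) = \operatorname{rank}(J_k c) + 1$, so by Proposition~\ref{prop:rank} the conjecture is equivalent to the statement that $J_k c$ attains rank $|E|$ at a generic $k$, i.e.\ that $J\tilde c$ keeps its maximal rank $|E|+1$ on the hyperplane $\{k_{0\ell}=0\}$. This is exactly where the difficulty lies, and I expect it to be the main obstacle: identifiability of $\widetilde M$ only guarantees that some $(|E|+1)\times(|E|+1)$ minor of $J\tilde c$ is a nonzero polynomial $p(k,k_{0\ell})$, whereas I need $p(k,0)\not\equiv 0$ for at least one such minor. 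In principle $p$ could be divisible by $k_{0\ell}$ for every maximal minor, and there is no formal reason, derived from $\widetilde M$ alone, to rule this out; this is precisely why the conjecture is hard in full generality, and why the realistic target is to establish it for structured families in which $J_k c$ can be analyzed directly.

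For the catenary, cycle, and mammillary families with $In=Out=\{1\}$ I would drop the hypothesis on $\widetilde M$ altogether and prove the stronger statement that the no-leak model $M$ is identifiable outright, computing $J_k c$ explicitly and reducing its nonsingularity to Lemma~\ref{lem:elem-sym-poly}. Take the cycle as the model case, with edge variables $a_1=k_{21},\dots,a_{n-1}=k_{n,n-1},a_n=k_{1n}$, so $|E|=n$. A direct expansion shows that the left-hand side of~\eqref{eq:i-o} is $\prod_{i=1}^n(\partial + a_i)$ minus the cycle term $a_1\cdots a_n$, so after cancellation of the constant term its coefficients are the elementary symmetric polynomials $e_1,\dots,e_{n-1}$ in $a_1,\dots,a_n$; removing row $1$ and column $1$ breaks the cycle into a path, so the right-hand side is $\prod_{i=2}^n(\partial+a_i)\,u_1$, whose coefficients are the elementary symmetric polynomials $\tilde e_1,\dots,\tilde e_{n-1}$ in $a_2,\dots,a_n$. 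I would then select the $n$ coordinates $e_1,\tilde e_1,\dots,\tilde e_{n-1}$: since each $\tilde e_m$ is free of $a_1$, replacing the $e_1$-row of the corresponding $n\times n$ submatrix by $(e_1-\tilde e_1)$ makes it block triangular, with top-left entry $1$ and lower-right block equal to the Jacobian of $\tilde e_1,\dots,\tilde e_{n-1}$ in $a_2,\dots,a_n$; by Lemma~\ref{lem:elem-sym-poly} this determinant equals $\pm\prod_{2\le i<j\le n}(a_i-a_j)\neq 0$. Hence $J_k c$ has rank $n=|E|$ and $M$ is identifiable by Proposition~\ref{prop:rank}, while $\widetilde M$ is then identifiable by Proposition~\ref{prop:add-leak-in-out}(1), confirming the conjecture for these models.

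The catenary and mammillary cases I would handle analogously: the left-hand side again produces symmetric-polynomial-type coefficients, while the $(1,1)$-minor governing the right-hand side encodes the combinatorics of the relevant graph (a path for catenary, a star for mammillary). The bookkeeping differs—identifying which $n$ coefficients to keep and verifying that the resulting submatrix is block triangular—but in each case I expect the determinant to collapse, after elementary row operations, to a Vandermonde-type product controlled by Lemma~\ref{lem:elem-sym-poly}. The genuine work, and the step most likely to require care, is choosing the correct square submatrix for each family so that the non-symmetric right-hand-side coefficients line up to expose a nonvanishing Vandermonde factor.
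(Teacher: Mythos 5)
First, note that the statement you were asked to prove is stated in the paper as a conjecture (imported from \cite{GHMS19One}) and is \emph{not} proved there in general; the paper resolves it only for catenary, cycle, and mammillary models with $In=Out=\{1\}$ (Theorem~\ref{Catenary, cycle, mam., w/wo leak, ID}), and its entire proof is a citation: the no-leak models are identifiable by~\cite[Proposition~4.7]{GHMS19One}, and adding a leak back preserves identifiability by Proposition~\ref{prop:add-leak-in-out}. Your opening block-triangular analysis of $J\tilde c$ on the hyperplane $\{k_{0\ell}=0\}$ is correct (including the matrix-tree argument that $w=\mp\det\big((A_0)_{\ell\ell}\big)$ is a nonzero polynomial for strongly connected $G$), and your diagnosis of the obstruction --- every maximal minor of $J\tilde c$ could a priori be divisible by $k_{0\ell}$ --- is exactly why the general conjecture does not follow formally from identifiability of $\widetilde M$ and remains open. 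You are right not to claim the general statement.

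For the three families, your high-level strategy coincides with the paper's (prove the no-leak model identifiable, then invoke Proposition~\ref{prop:add-leak-in-out}), but you substitute a self-contained computation for the paper's citation. Your cycle argument is complete and correct: the left-hand-side coefficients are $e_1,\dots,e_{n-1}$ on all $n$ edge parameters, the right-hand-side coefficients are the elementary symmetric polynomials on the $n-1$ parameters excluding $k_{21}$, and the row operation $e_1\mapsto e_1-\tilde e_1=k_{21}$ reduces your chosen $n\times n$ minor to the Vandermonde determinant of Lemma~\ref{lem:elem-sym-poly}; this is in substance the $p=1$ specialization of the machinery in Proposition~\ref{prop:coeff-cycle}. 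The genuine gap is the catenary and mammillary cases, which you only sketch. There $|E|=2(n-1)$, the graphs are bidirected, and the left-hand-side coefficients are \emph{not} elementary symmetric polynomials in the edge parameters: the characteristic polynomial of the compartmental matrix no longer factors into linear terms, so the coefficients do not ``collapse to a Vandermonde-type product'' after elementary row operations in any obvious way. Asserting that you ``expect the determinant to collapse'' is optimism, not proof, and exhibiting a full-rank $(2n-2)\times(2n-2)$ submatrix for these two families is precisely the nontrivial content of~\cite[Proposition~4.7]{GHMS19One} (see also the alternative references collected in Remark~\ref{rmk:reln-prior-lit}). To close the argument you must either carry out those two Jacobian computations in detail or cite the prior result, as the paper does.
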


The next result resolves Conjecture~\ref{conj:remove-leak} for three infinite families of models, which we introduce now.  A {\em catenary model} is a linear compartmental model $(G,In,Out,Leak)$ for which $G$ is the catenary graph in Figure~\ref{fig:cat-model} (for some $n$).  Similarly, a {\em cycle model} (respectively, a {\em mammillary model}) arises from the graph in Figure~\ref{fig:cycle-graph} (respectively, Figure~\ref{fig:mamm-model}).  All three families of models are among the most common in the literature~\cite{distefano-book, godfrey, vajda1982,
van-den-Hof, vicini-mam-cat}.

\begin{figure}[ht]
\subfigure[The catenary graph.]{
    \label{fig:cat-model}
    \centering
        \begin{tikzpicture}[scale=1.8]
 %-----------------------------
 % Catenary
 %-----------------------------
  	\draw (-1,0) circle (0.2);	
    \draw ( 0,0) circle (.2); 
    \draw ( 1, 0) circle (.2);
    \draw (2, 0) circle (.2); 

 % ARROWS%
 \draw[->] (-.75, .1) -- ( -.25, .1); 
 \draw[->] (.25, .1) -- (.75, .1); 
% \draw[loosely dotted, thick, ->] (1.25, .1) -- (1.75, .1);
  \draw[<-] (-.75, -.1) -- ( -.25, -.1); 
 \draw[<-] (.25, -.1) -- (.75, -.1); 
\draw[loosely dotted, thick, -] (1.4, 0) -- (1.6, 0);
 %\draw[loosely dotted, thick, <-] (1.25, -.1) -- (1.75, -.1);
  % RATES
   	 \node[] at (0.5, .25) {$k_{32}$};
   	%  \node[] at (-0.95, -0.5) {$k_{01}$};
   	 \node[] at (-0.5, 0.25) {$k_{21}$};
   	 \node[] at (0.5, -0.25) {$k_{23}$};
   	 \node[] at (-.5, -0.25) {$k_{12}$};
%   	 \node[] at (1.5, .25) {$k_{n, n-1}$};
%   	 \node[] at (1.5, -.25) {$k_{n-1, n}$};
   	 
 % LABEL: numbers
     	\node[] at (-1, 0) {1};
     	\node[] at  (0,0) {$2$};
     	\node[] at  (1,0) {$3$};
     	\node[] at (2,0) {$n$};
     	
%  %OUTPUT
%   	\draw (-1.33,-.49) circle (0.05);	
%  	 \draw[-] (-1, -.2 ) -- (-1.3, -.45);	
%  	 \node[] at (-1.65, -.49) {out};
%  % Input
%  	 \draw[->] (-1.7, 0) -- (-1.3, 0);	
%   	 \node[] at (-1.9, 0) {in};
 % LABEL: Model name
     	\node[above right] at (-1.4, -1.4) {Catenary};
  % BOX
 \draw (-1.4,-1.4) rectangle (2.4, 1.2);
%  \draw (-2.2,-1.5) rectangle (1.4, 1.4);
 \end{tikzpicture}
}
\hfill%
\subfigure[The mammillary graph.]{
     \label{fig:mamm-model}
     \centering
 \begin{tikzpicture}[scale=2]
 %-----------------------------
 % MAMM
 %-----------------------------
  	\draw (-1,0) circle (0.2);	
    \draw (1, 0) circle (.2); 
    \draw (1, 1) circle (.2); 
    \draw (1, -1) circle (.2); 
 % ARROWS%
    \draw[<-] (-.8, .25) -- (.75, .95); 
    \draw[->] (-.75, -.05) -- ( .75, -.05); 
    \draw[->] (-.8, -.25) -- ( .75, -.95);
    
    \draw[->](-.8, .15) -- (.75, .85); 
    \draw[<-] (-.75, .05) -- (.75, .05);
    \draw[<-] (-.8, -.15) -- (.75, -.85);
    
    \draw[loosely dotted, thick] (1, .4) -- ( 1, .6); 
    
    % \draw[->] (-1.2, .22) -- (-1.5, .5);
    
 % RATES
   	 \node[] at (-.25, -.7) {$k_{21}$};
   	 \node[] at (0.0, .8) {$k_{1n}$};
   	 \node[] at (.25, 0.4) {$k_{n1}$};
   	 \node[] at (0, -0.2) {$k_{31}$};
   	 \node[] at (0, .2) {$k_{13}$};
   	 \node[] at (.25, -.45) {$k_{12}$};
   	%  \node[] at (-1.15, 0.5) {$k_{01}$};
 % LABEL: numbers
     	\node[] at (-1, 0) {1};
     	\node[] at (1,-1) {2}; 
        \node[] at (1,1) {$n$}; 
     	\node[] at (1, 0) {3};
%  %OUTPUT
%   	\draw (-1.33,-.49) circle (0.05);	
%  	 \draw[-] (-1, -.2 ) -- (-1.3, -.45);	
%  	 \node[] at (-1.65, -.49) {out};
%  % Input
%  	 \draw[->] (-1.7, 0) -- (-1.3, 0);	
%   	 \node[] at (-1.9, 0) {in};
 % LABEL: Model name
     \node[above right] at (-1.4, -1.4) {Mammillary};
  % BOX
 \draw (-1.4,-1.4) rectangle (1.4, 1.3);
 \end{tikzpicture}
}
\caption{
Two graphs with $n$ compartments (cf.~\cite[Figures 1--2]{GNA17Three}). 
%: catenary (Figure~\ref{fig:cat-model}) and mammillary (Figure~\ref{fig:mamm-model}).
%We will consider models arising from these graphs (by specifying input, output, and leak sets). 
}
 \end{figure}
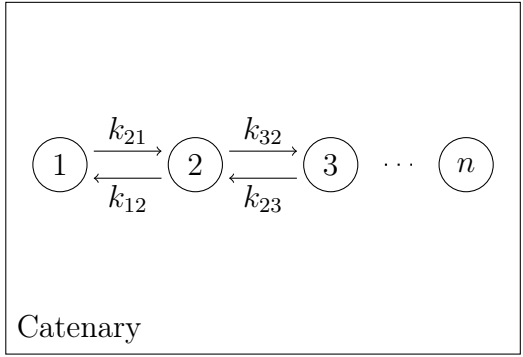
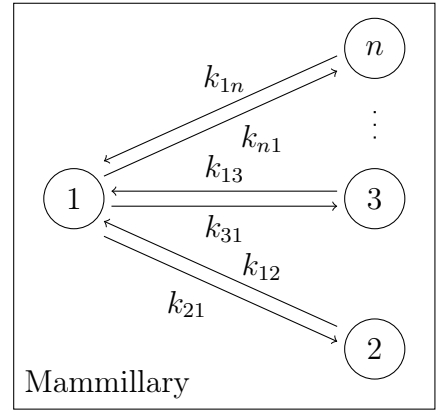

\begin{theorem}
Let $\widetilde{M}$ be a catenary, cycle, or mammillary model that has exactly one input and exactly one output, both in the first compartment, and exactly one leak. Then $\widetilde{M}$ is generically locally identifiable and so is the model M obtained by removing the leak. \label{Catenary, cycle, mam., w/wo leak, ID}
\end{theorem}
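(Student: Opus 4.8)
The plan is to reduce the whole statement to proving generic local identifiability of the leak-free model $M$, and then to split according to the graph family. Since $M$ has no leaks and $\widetilde M$ is obtained from $M$ by adding a single leak, Proposition~\ref{prop:add-leak-in-out}(1) guarantees that identifiability of $M$ forces identifiability of $\widetilde M$; hence for the cycle it suffices to treat $M$ (for catenary and mammillary I will obtain both models at once). Throughout, since $|Out|=1$, there is a single input-output equation~\eqref{eq:i-o}, whose left-hand side is $\det(\partial I-A)\,y_1$ and whose right-hand side is $\det\bigl((\partial I-A)_{11}\bigr)\,u_1$; by Remark~\ref{rmk:0-constant-term} the constant term of the left-hand side vanishes, so the coefficient map records the non-monic coefficients of these two polynomials in $\partial$.

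For the cycle, write $a_i:=k_{i+1,i}$ (indices mod $n$), so $|E|=n$. Expanding the cyclic bidiagonal matrix $\partial I-A$ gives the factorizations $\det(\partial I-A)=\prod_{i=1}^n(\partial+a_i)-\prod_{i=1}^n a_i$ and, after deleting row and column $1$ (which removes the single wrap-around entry and leaves a triangular matrix), $\det\bigl((\partial I-A)_{11}\bigr)=\prod_{i=2}^n(\partial+a_i)$. Reading off coefficients, the right-hand side supplies the elementary symmetric polynomials $e_1,\dots,e_{n-1}$ in $a_2,\dots,a_n$, while the left-hand side supplies, among others, $e_1(a_1,\dots,a_n)=a_1+\cdots+a_n$. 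I then form the $n\times n$ submatrix of the Jacobian whose rows are these $n$ coefficients, ordered as $\bigl(e_1(a_1,\dots,a_n),\,e_1(a_2,\dots,a_n),\dots,e_{n-1}(a_2,\dots,a_n)\bigr)$, and whose columns are $a_1,\dots,a_n$. Since each $e_m(a_2,\dots,a_n)$ has zero derivative in $a_1$ while $e_1(a_1,\dots,a_n)$ has derivative $1$ there, this matrix is block upper-triangular with a leading $1$, so its determinant equals (up to sign) the determinant of the Jacobian of $\{e_1,\dots,e_{n-1}\}$ in the variables $a_2,\dots,a_n$. By Lemma~\ref{lem:elem-sym-poly} the latter is the nonzero Vandermonde polynomial, so the coefficient map has Jacobian of rank $n=|E|$ at a generic point, and Proposition~\ref{prop:rank} yields identifiability of $M$.

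For the catenary and mammillary families the cycle computation breaks down, since deleting row and column $1$ leaves a tridiagonal continuant (catenary) rather than a triangular matrix, so I would instead invoke the structural criteria already set up. Both graphs satisfy $|E|=2|V|-2$ and are inductively strongly connected with respect to vertex $1$ (adjoining compartments $1,2,3,\dots$ in order keeps each induced subgraph strongly connected). With $In=Out=\{1\}$ and all compartments leaking, the criterion recorded after Definition~\ref{def:id-cycle-model} then establishes condition~(4), so each is an identifiable cycle model in the sense of Definition~\ref{def:id-cycle-model}. Applying Proposition~\ref{prop:id-cycle-model-0-1-leaks} to remove all leaks (giving $M$) or all but the one prescribed leak (giving $\widetilde M$) yields generic local identifiability of both models at once.

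The main obstacle is the cycle case. One must verify the two determinant factorizations precisely and, more importantly, arrange the Jacobian submatrix so that the elementary-symmetric block decouples cleanly from the extra $a_1$-coordinate; the combinatorial content is that the right-hand side already determines the multiset $\{a_2,\dots,a_n\}$ while one left-hand coefficient pins down $a_1$, which is exactly what Lemma~\ref{lem:elem-sym-poly} converts into a nonvanishing Jacobian. Getting the row and column selection right so that the block-triangular determinant is literally the Vandermonde is the step most likely to require care.
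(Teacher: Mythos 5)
Your proposal is correct, but it takes a genuinely different route from the paper. The paper's proof is a two-line citation: it quotes Proposition~4.7 of~\cite{GHMS19One} for generic local identifiability of the leak-free catenary, cycle, and mammillary models with $In=Out=\{1\}$, and then applies Proposition~\ref{prop:add-leak-in-out}(1) to add the leak. You instead prove the base cases from scratch. For the cycle, your computation of $\det(\partial I-A)=\prod_i(\partial+a_i)-\prod_i a_i$ and $\det\bigl((\partial I-A)_{11}\bigr)=\prod_{i\geq 2}(\partial+a_i)$ is exactly the $p=1$ specialization of the paper's Proposition~\ref{prop:coeff-cycle} (which the paper only states for $p\neq 1$), and your Jacobian argument --- the column of $\partial/\partial a_1$ isolates the coefficient $e_1(a_1,\dots,a_n)$ and leaves the Vandermonde block of Lemma~\ref{lem:elem-sym-poly} on $a_2,\dots,a_n$ --- is sound; the count of $n=|E|$ coefficients and the appeal to Proposition~\ref{prop:rank} check out. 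For the catenary and mammillary families, your verification that $|E|=2|V|-2$ and that both graphs are inductively strongly connected with respect to vertex~$1$, followed by the criterion after Definition~\ref{def:id-cycle-model} and Proposition~\ref{prop:id-cycle-model-0-1-leaks}, is precisely the alternative argument the paper itself sketches in Remark~\ref{rmk:reln-prior-lit}; it conveniently yields $M$ and $\widetilde M$ (with the leak in an arbitrary compartment) simultaneously. What your approach buys is self-containedness within the paper's own toolkit, at the cost of length; what the paper's citation buys is brevity, at the cost of deferring the real content to prior work.
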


\begin{proof}
Proposition 4.7 from \cite{GHMS19One} states that the models $M$ in the statement of the theorem, with no leaks, are generically locally identifiable. Then, by Proposition~\ref{prop:add-leak-in-out}, adding a leak preserves identifiability. Thus, both $M$ and $\widetilde{M}$ are generically locally identifiable. 
\end{proof}

In other words, for every catenary, mammillary, or cycle model with input and output in the first compartment, identifiability is preserved when the leak is moved or deleted.
Other models for which identifiability is preserved when the leak is moved, are those obtained by deleting all but one leak from an ``identifiable cycle model''~\cite[Theorem 1]{MSE15Two}. 
Also, for identifiability analysis of catenary and
mammillary models where some  of the parameters are already known,
we refer the reader to~\cite{vicini-mam-cat}.

\begin{remark} \label{rmk:reln-prior-lit}
%As seen from the proof, Theorem~\ref{Catenary, cycle, mam., w/wo leak, ID} was mostly known.  
%Indeed, 
For the part of Theorem~\ref{Catenary, cycle, mam., w/wo leak, ID} concerning models with one leak, 
another approach to the proof is 
via~\cite[Section 5]{MeshkatSullivant} and~\cite[Theorem 1]{MSE15Two}, 
or (for mammillary models) through~\cite[Theorem 5.1]{van-den-Hof}.  
For the part of the theorem on catenary and mammillary models with no leaks, another proof is given in~\cite[Sections 3.1 and 4.1]{cobelli-constrained-1979}.
%The main contribution of our result, therefore, is to illuminate some infinite families for which Conjecture~\ref{conj:remove-leak} holds.
\end{remark}

We end this subsection with a result that combines two prior results.

\begin{proposition}\label{prop:id-cycle-model-0-1-leaks}
Let $M$ be a linear compartmental model obtained from an 
identifiable cycle model 
by removing all leaks 
or removing all leaks except one.
Then $M$ is generically locally identifiable.
\end{proposition}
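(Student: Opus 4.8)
The plan is to restate everything in terms of Jacobian ranks via Proposition~\ref{prop:rank} and then to recognize each of the two cases as a previously established fact. By Proposition~\ref{prop:rank}, a reduced model $M$ is generically locally identifiable exactly when the generic rank of the Jacobian of its coefficient map equals $|E|+|Leak|$: this target is $|E|$ when all leaks are removed and $|E|+1$ when exactly one leak is retained. The hypothesis that we begin with an identifiable cycle model enters only through condition~(4) of Definition~\ref{def:id-cycle-model}, which says that the all-leak coefficient map has image of dimension $|E|+1$, i.e.\ its Jacobian has generic rank $|E|+1$. (Note that, since the all-leak model has $|Leak|=|V|$, this model is itself \emph{not} identifiable once $|V|\ge 2$; condition~(4) is a statement about image dimension, not full rank.)

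Before invoking the prior results, I would record the structural observation that makes them applicable. A leak at compartment $i$ affects the compartmental matrix $A$ only in the entry $A_{ii}$, which becomes $-\left(k_{0i}+\sum_{p:\,i\to p}k_{pi}\right)$. Writing $d_i := k_{0i}+\sum_{p:\,i\to p}k_{pi}$ for the total outflow of compartment $i$, the substitution $(k_{ij},k_{0i})\mapsto(k_{ij},d_i)$ is a triangular linear bijection of the all-leak parameter space, and every coefficient of the input-output equation~\eqref{eq:i-o} is a polynomial in the $k_{ij}$ and the $d_i$. In these coordinates the two reduced models are the slices where the diagonal data $d_i$ is constrained: removing all leaks forces $d_i=\sum_{p}k_{pi}$ for every $i$ (so that $A$ has zero column sums and the constant term of the left-hand side of~\eqref{eq:i-o} vanishes, as in Remark~\ref{rmk:0-constant-term}), while removing all but one leak frees up a single diagonal entry $d_\ell$. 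This clarifies exactly what must be shown: that constraining the diagonal in these two prescribed ways still leaves the coefficient map with full generic rank.

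Each of these two rank computations is exactly a prior result for the class of identifiable cycle models. For the case of a single remaining leak I would cite~\cite[Theorem~1]{MSE15Two}, and for the case of no leaks I would cite the corresponding statement of Meshkat and Sullivant~\cite{MeshkatSullivant}; combining the two proves the proposition. The bookkeeping step is to check that condition~(4) of Definition~\ref{def:id-cycle-model} is precisely the hypothesis used in those theorems, and that the conclusion is independent of \emph{which} single leak is kept (this uses strong connectedness). I expect the genuine obstacle to lie in the no-leak case: one cannot deduce it from the one-leak case by deleting a leak, since that is exactly the generally open direction of Conjecture~\ref{conj:remove-leak} — only the reverse direction, adding a leak, is known to preserve identifiability (Proposition~\ref{prop:add-leak-in-out}\,(1)). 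Hence the no-leak case genuinely requires its own rank computation for the zero-column-sum (Laplacian) matrix, which is the content supplied by~\cite{MeshkatSullivant}.
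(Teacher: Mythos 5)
Your one-leak case matches the paper exactly: cite \cite[Theorem~1]{MSE15Two}. The problem is in the no-leak case, where you assert that it ``cannot be deduced from the one-leak case by deleting a leak, since that is exactly the generally open direction of Conjecture~\ref{conj:remove-leak},'' and you then outsource the required rank computation to an unspecified ``corresponding statement'' in \cite{MeshkatSullivant}. Both halves of this are problematic. First, you have overlooked part~3 of Proposition~\ref{prop:add-leak-in-out} (i.e., \cite[Theorem~4.3]{GHMS19One}), which is a \emph{proven} special case of leak removal: if $M$ has its input, output, and leak all in a single compartment and nothing else, then deleting that leak preserves identifiability. An identifiable cycle model has $In=Out=\{1\}$ by Definition~\ref{def:id-cycle-model}, so one simply takes the one-leak case with $Leak=\{1\}$ (already identifiable by \cite[Theorem~1]{MSE15Two}) and applies Proposition~\ref{prop:add-leak-in-out}.3 to remove that leak. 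This is precisely the paper's proof, and it is two lines.

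Second, your fallback citation does not obviously carry the load you place on it: the paper uses \cite{MeshkatSullivant} only for the sufficient criterion (inductive strong connectedness) guaranteeing condition~(4) of Definition~\ref{def:id-cycle-model} for the \emph{all-leak} model, not for any zero-leak rank computation. Asserting that the no-leak case ``is the content supplied by \cite{MeshkatSullivant}'' without identifying the statement is a gap, and the surrounding reasoning (that a fresh Laplacian rank computation is unavoidable) is the wrong conclusion drawn from correctly noticing that Conjecture~\ref{conj:remove-leak} is open in general. The preliminary material in your proposal --- the reparametrization $(k_{ij},k_{0i})\mapsto(k_{ij},d_i)$ and the description of the reduced models as diagonal slices --- is a reasonable way to frame what condition~(4) gives you, but it is not needed once Proposition~\ref{prop:add-leak-in-out}.3 is in hand.
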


\begin{proof}
Any such model with one leak is generically locally identifiable, by \cite[Theorem 1]{MSE15Two}. 
In particular, the model with $Leak=\{1\}$ is identifiable (and strongly connected), and so Proposition~\ref{prop:add-leak-in-out}.3 implies that 
removing the leak preserves identifiability. 
\end{proof}

\subsection{Moving Inputs and Outputs} \label{sec:cycle}

In the previous subsection, we investigated whether identifiability is preserved when a leak is moved or removed. Now we consider other operations: moving inputs and outputs. 
We show that these operations preserve identifiability in cycle models (in such models, 
%by symmetry, 
moving the input is equivalent to moving the output).  As a consequence, we obtain the main result of this subsection, Theorem \ref{Big Cycle Model theorem }, which states that every cycle model with at most one leak (and at least one input and at least one output) -- such as those considered in Examples~\ref{ex:main} and~\ref{ex:main-2} --  is identifiable. 

\begin{theorem}  \label{Big Cycle Model theorem }
Assume\footnote{This $n\geq 3$ assumption comes from the fact that the $n=1$ and $n=2$ cases reduce to catenary models.} 
$n\geq 3$. Let $M$ be an $n$-compartment cycle model with at least one input, at least one output, and at most one leak.
Then $M$ is generically locally identifiable.
\end{theorem}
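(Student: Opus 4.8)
The plan is to reduce $M$ to a simplest core model and then prove identifiability of the core by an explicit Jacobian computation. Since the cycle graph is strongly connected for $n\geq 3$, I would first pass to the submodel $M_0$ obtained by keeping a single input compartment and a single output compartment of $M$ and deleting the leak. If $M_0$ is generically locally identifiable, then re-adding the leak (if $M$ had one) via part~1 of Proposition~\ref{prop:add-leak-in-out}, and then re-adding the remaining inputs and outputs one at a time via part~2, shows that $M$ itself is identifiable, because every intermediate model is strongly connected and has at least one input. Finally, the cyclic symmetry of the directed cycle (relabeling vertices by a rotation) lets me assume $In=\{1\}$ and $Out=\{\ell\}$ for some $1\leq \ell\leq n$; this is the precise sense in which moving the input is equivalent to moving the output, since only the gap between them is an invariant. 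So it remains to prove that the $n$-compartment cycle model with $In=\{1\}$, $Out=\{\ell\}$, and $Leak=\emptyset$ is generically locally identifiable.

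Next I would write the input–output equation~\eqref{eq:i-o} explicitly. Writing $a_i$ for the rate out of compartment $i$ (so $a_i=k_{i+1,i}$, with $a_n=k_{1n}$), the compartmental matrix is cyclic bidiagonal with zero column sums, so its characteristic polynomial is $\det(\lambda I-A)=\prod_{i=1}^n(\lambda+a_i)-\prod_{i=1}^n a_i$; hence the nonmonic left-hand-side coefficients are exactly the elementary symmetric polynomials $e_1,\dots,e_{n-1}$ in $a_1,\dots,a_n$, while the constant term vanishes as in Remark~\ref{rmk:0-constant-term}. Removing row $1$ and column $\ell$ from $\lambda I-A$ leaves a matrix whose determinant expansion has a unique nonzero term (subdiagonal entries for columns below $\ell$, diagonal entries above), which I would show gives, up to sign, $\det((\lambda I-A)_{1\ell})=\big(\prod_{i<\ell}a_i\big)\prod_{r>\ell}(\lambda+a_r)$. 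In particular, the undifferentiated-input coefficient on the right-hand side is $g:=\prod_{i\neq\ell}a_i=e_n/a_\ell$, and this coefficient is available for every $\ell$ (one checks it matches Example~\ref{ex:main}).

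Finally, by Proposition~\ref{prop:rank} it suffices to exhibit $n$ coefficients whose Jacobian with respect to $(a_1,\dots,a_n)$ is generically nonsingular, and I would take $e_1,\dots,e_{n-1}$ together with $g$. By Lemma~\ref{lem:elem-sym-poly} the gradients $\nabla e_1,\dots,\nabla e_{n-1}$ are generically independent, spanning an $(n-1)$-dimensional space whose one-dimensional orthogonal complement is tangent to the fiber along which $e_1,\dots,e_{n-1}$ are fixed and only $e_n$ varies. Differentiating $\prod_i(x+a_i)=x^n+e_1x^{n-1}+\cdots+e_n$ in $e_n$ shows this tangent direction is $v_i=1/q'(-a_i)$ with $q(x)=\prod_i(x+a_i)$, and the augmented Jacobian is nonsingular exactly when $dg/de_n=\nabla g\cdot v\neq 0$. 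Since $g=e_n/a_\ell$, a short computation reduces this to the inequality $\prod_{j\neq\ell}(a_j-a_\ell)\neq\prod_{j\neq\ell}a_j$ of rational functions, which holds because the two sides have different degrees in $a_\ell$. The main obstacle is exactly this last step -- verifying that the extra right-hand-side coefficient's gradient escapes the span of the symmetric-polynomial gradients for every output position $\ell$ -- and the fiber-tangent reduction above is what turns it into a one-line check rather than an unwieldy $n\times n$ determinant.
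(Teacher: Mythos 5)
Your proposal is correct, and its outer skeleton coincides with the paper's: reduce to the no-leak, one-input, one-output cycle model via Proposition~\ref{prop:add-leak-in-out}, compute the input--output equation by block-triangular determinant expansions (your formula for $\det((\lambda I-A)_{1\ell})$ agrees with equation~\eqref{RHS1} in Proposition~\ref{prop:coeff-cycle}), and then show an $n\times n$ Jacobian submatrix is generically nonsingular. Where you genuinely diverge is in the choice of the $n$-th coefficient and in how nonsingularity is established. The paper augments $e_1,\dots,e_{n-1}$ with the \emph{leading} right-hand-side coefficient $\kappa=a_1\cdots a_{\ell-1}$ and proves algebraic independence of $\{e_1,\dots,e_{n-1},\kappa\}$ by a symmetry argument: if $\kappa$ were algebraic over $\mathbb{C}(e_1,\dots,e_{n-1})$, so would be every product of $\ell-1$ of the $a_i$'s, and the product of all of these is a power of $e_n$, contradicting the algebraic independence of $e_1,\dots,e_n$ (Remark~\ref{rem:alternate-proof}). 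You instead augment with the \emph{trailing} coefficient $g=\prod_{i\neq\ell}a_i=e_n/a_\ell$ and test directly whether $\nabla g$ leaves the span of $\nabla e_1,\dots,\nabla e_{n-1}$ by pairing against the fiber-tangent vector $v_i=1/q'(-a_i)$, reducing the question to $\prod_{j\neq\ell}(a_j-a_\ell)\neq\prod_{j\neq\ell}a_j$, which holds by degree count in $a_\ell$. Both arguments are sound; the paper's is shorter and purely symbolic, while yours is more computational but buys two things: it treats the case $\ell=1$ uniformly (the paper routes $p=1$ through Theorem~\ref{Catenary, cycle, mam., w/wo leak, ID} and a citation to prior work), and the fiber-tangent technique is reusable whenever one wants to append a single extra coefficient to the elementary symmetric polynomials. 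One small caveat: your degree argument uses $n-1\geq 2$, so it is worth noting explicitly that this is where the hypothesis $n\geq 3$ enters your version of the computation.
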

\noindent 
% {\color{blue}
In the next subsection, we prove a partial converse to Theorem~\ref{Big Cycle Model theorem } 
(see Theorem~\ref{thm:partial-converse-num-leaks}). 
% }

We prove Theorem~\ref{Big Cycle Model theorem } at the end of this subsection by using the next two propositions.  For the proofs of those propositions, recall that $(A)_{ji}$ denotes the submatrix obtained by removing row $j$ and column $i$ from matrix $A$.
%since for $n=2$, the cycle model is covered by Theorem 3.1 up to relabeling input compartment?

%This theorem expands upon Theorem \ref{Catenary, cycle, mam., w/wo leak, ID} by showing that, in a cycle model, not only can the leak be moved, but also the input \textit{and} the output, and identifiability is still preserved under these operations. We prove Theorem \ref{Big Cycle Model theorem } (at the end of the subsection) by first showing that moving the output preserves identifiability. To see this, we first compute the coefficient map.

\begin{proposition} \label{prop:coeff-cycle}
Assume $n\geq 3.$ For an $n$-compartment cycle model with 
$In=\{1\}$, $Out=\{p\}$ for some $p \neq 1$, and $Leak= \emptyset$,
%no leaks, input in compartment 1 only, and output in compartment $p$ only, such that $p\neq1$ (that is, the output is not in the same compartment as the input), 
the coefficient map $c:\mathbb{R}^{n}\rightarrow \mathbb{R}^{2n-p}$ is given by: 
%$$c:\mathbb{R}^{n}\rightarrow \mathbb{R}^{2n-p}$$ 
%where 
$$(k_{21}, k_{32}, \dots , k_{1n}) \longmapsto 
    \left(e_1, ~e_2,~ \dots,~ e_{n-1}, ~
     \kappa, ~ e^*_1\kappa ,~ \dots ~,~  e_{n-p}^* \kappa  \right)~,$$
\noindent where $\kappa:= \prod_{i=2}^{p}k_{i,i-1}$, and $e_j$ 
and $e^*_j$ 
denote the 
%is the 
$j^{th}$ elementary symmetric polynomial on the sets $E=\{k_{21},\dots,k_{n,n-1}, k_{1n}\}$ %, and $e^*_j$ is the $j^{th}$ elementary symmetric polynomial on the set 
and 
$E^*=\{k_{p+2,p+1},\dots, k_{n,n-1}, k_{1n}\}$, respectively.
\end{proposition}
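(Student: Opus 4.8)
The plan is to compute both sides of the single input--output equation~\eqref{eq:i-o} for this model---there is only one, since $Out=\{p\}$ and $In=\{1\}$---and then read off the vector of non-monic coefficients. Treating the operator $d/dt$ as a formal variable $s$, both $\det(\partial I-A)$ and $\det\big((\partial I-A)_{1p}\big)$ become polynomials in $s$ whose coefficients are polynomials in the $k_{ij}$. I will abbreviate the single outgoing rate of compartment $i$ by $a_i:=k_{i+1,i}$ for $i<n$ and $a_n:=k_{1n}$, so that $E=\{a_1,\dots,a_n\}$.

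First I would record the structure of $\partial I-A$. Because $Leak=\emptyset$ and each compartment has exactly one outgoing edge, $\partial I-A$ is bidiagonal-with-corner: the diagonal entries are $s+a_i$, the subdiagonal entries are $-a_{i-1}$ (in row $i$, column $i-1$), and there is a single corner entry $-a_n$ in row $1$, column $n$. For the left-hand side, the permutation expansion of $\det(\partial I-A)$ has only two nonzero terms: the identity, giving $\prod_{i=1}^n(s+a_i)$, and the full $n$-cycle, which collects all off-diagonal entries and contributes $-\prod_{i=1}^n a_i$. Expanding $\prod_{i=1}^n(s+a_i)$ in elementary symmetric polynomials on $E$, the top term $s^n$ is monic, the two copies of $e_n=\prod_i a_i$ cancel (so the constant term vanishes, as predicted by Remark~\ref{rmk:0-constant-term}), and the surviving non-monic coefficients are precisely $e_1,\dots,e_{n-1}$.

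The crux is the right-hand side, $\det\big((\partial I-A)_{1p}\big)$. Deleting row $1$ removes the corner entry, so every surviving nonzero entry of this $(n-1)\times(n-1)$ matrix lies on the original diagonal or subdiagonal. Relabeling the rows $2,\dots,n$ and the surviving columns $1,\dots,p-1,p+1,\dots,n$ as $1,\dots,n-1$, I would check that the matrix becomes block diagonal: an upper-triangular block of size $p-1$ whose diagonal entries are the original subdiagonal entries $-a_1,\dots,-a_{p-1}$, and a lower-triangular block of size $n-p$ whose diagonal entries are the original diagonal entries $s+a_{p+1},\dots,s+a_n$. Hence $\det\big((\partial I-A)_{1p}\big)=(-1)^{p-1}\kappa\prod_{a\in E^*}(s+a)$, where $\kappa=a_1\cdots a_{p-1}=\prod_{i=2}^p k_{i,i-1}$ and $E^*=\{a_{p+1},\dots,a_n\}=\{k_{p+2,p+1},\dots,k_{n,n-1},k_{1n}\}$.

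Finally I would assemble~\eqref{eq:i-o}. The prefactor $(-1)^{1+p}$ cancels the $(-1)^{p-1}$, so the right-hand side equals $\kappa\prod_{a\in E^*}(s+a)\,u_1=\kappa\big(s^{n-p}+e_1^*s^{n-p-1}+\dots+e_{n-p}^*\big)u_1$, whose coefficients are $\kappa,\,e_1^*\kappa,\dots,e_{n-p}^*\kappa$ (the leading one, $\kappa$, being non-monic). Collecting the $n-1$ coefficients from the left and these $1+(n-p)$ from the right yields exactly the asserted map into $\mathbb{R}^{2n-p}$. I expect the one genuinely delicate point to be the index bookkeeping that establishes the block-triangular form of $(\partial I-A)_{1p}$ together with the correct sign; once that structure is in hand, the result is just a reading-off of coefficients, and it can be sanity-checked against Example~\ref{ex:main} (where $n=4$, $p=3$).
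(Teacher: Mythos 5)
Your proposal is correct and follows essentially the same route as the paper's proof: both exploit the bidiagonal-with-corner structure of $\partial I - A$ to get $\det(\partial I-A)=\prod_i(\tfrac{d}{dt}+k_{i+1,i})-\prod_i k_{i+1,i}$, and both identify $(\partial I-A)_{1p}$ as block triangular with an upper-triangular block of diagonal $-k_{21},\dots,-k_{p,p-1}$ and a lower-triangular block of diagonal $\tfrac{d}{dt}+k_{p+2,p+1},\dots,\tfrac{d}{dt}+k_{1n}$, with the signs cancelling exactly as you describe. The only cosmetic difference is that you compute the left-hand determinant via the Leibniz expansion (two surviving permutations) where the paper expands along the first row; the resulting expression is identical.
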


\begin{proof}
%Let $p$ be the output compartment, so $p\in\{2,3,\ldots,n\}$. 
In the indices, we let $n+1:=1$.  By Proposition \ref{Input-Output Equation Definition}, the input-output equation is 
\begin{equation}
    \det(\partial I-A)y_p=(-1)^{p+1}\det(\partial I-A)_{1p}u_{1}~, \label{LHS0}
\end{equation}
where $A$ is the compartmental matrix. 
The $n\times n$ matrix $A':=(\partial I-A)$ is as follows:\\
\begin{align} \label{eq:matrix-A'}
    A'~=~
        \begin{bmatrix}
        \frac{d}{dt}+k_{21} & 0& \dots & & 0 & -k_{1n}\\
        -k_{21} & \frac{d}{dt}+k_{32} & 0 & \dots & & 0 \\
        0 & \ddots & \ddots & & &  \\
        \vdots & & -k_{i, i-1} & \frac{d}{dt}+k_{i+1, i} & & \vdots\\
        & & & \ddots &\ddots & \\
        0 &  & \dots & 0 & -k_{n, n-1} & \frac{d}{dt}+k_{1n}\\
        \end{bmatrix}~.
\end{align}
We compute the determinant by expanding along the first row: 
\begin{align} \label{LHS5} %\label{LHSexpanded}
     \det{A'} ~&=~ 
        \left(\frac{d}{dt}+k_{21}\right)\det{(A')_{11}}+(-1)^{n-1}\left(-k_{1n}\right)\det{(A')_{1n}} \\
   ~&=~ \left( \frac{d}{dt}+k_{21}\right) \prod_{i=2}^n\left(\frac{d}{dt}+k_{i+1,i}\right)-k_{1n}\prod_{j=2}^{n}k_{j,j-1}
    \notag
    \\
    ~&=~
   \prod_{i=1}^n\left(\frac{d}{dt}+k_{i+1,i}\right)-\prod_{j=1}^{n}k_{j+1,j}~.    \notag
\end{align}
  Here, we used the fact that the matrix $(A')_{11}$ is lower triangular and the matrix $(A')_{1n}$ is upper triangular, so each determinant is just the products of the diagonal entries.

Next, 
we simplify equation (\ref{LHS5}) 
by recalling that the $e_j$'s denote the elementary symmetric polynomial on $E$: 
 \begin{equation*}
     \det(A') ~=~ \frac{d}{dt}^n+e_1\frac{d}{dt}^{n-1}+ \dots + e_{n-1}\frac{d}{dt}~. 
 \end{equation*}
 Thus, the left-hand side of the input-output equation~\eqref{LHS0} is: 
  \begin{equation*}
     \det(A')y_p=y^{(n)}+e_1y_p^{(n-1)}+\dots+ e_{n-1}y_p^{(1)}~, %\label{LHSfinal}
 \end{equation*}
and so the first $(n-1)$ coefficients stated in the proposition are correct. 

We now turn our attention to the remaining coefficients, those on the right-hand side of the input-output equation~(\ref{LHS0}).
It is straightforward to check, using~\eqref{eq:matrix-A'}, that the matrix $(A')_{1p}$ can be written as a block matrix:
\[(A')_{1p} %~=~
%\begin{bmatrix}
%-k_{21} & \frac{d}{dt}+k_{32} & 0 & \dots & &&0 \\
%0 & -k_{32} &&&&& \vdots \\
%\vdots && \ddots &&&& \\
%0& \dots & 0 & -k_{p,p-1} & 0 & \dots & 0 \\
%&&&&\frac{d}{dt}+k_{p+2, p+1} & &\vdots\\
%\vdots &&&&    &\ddots & 0\\
%0&\dots &&&&& \frac{d}{dt}+ k_{1n}
%\end{bmatrix}
    ~=~
\begin{bmatrix}
B & 0 \\
0 & C\\
\end{bmatrix}~,
\]
where $B$ is 
% a lower-triangular
% {\color{blue}
an upper-triangular,
% }, 
$(p-1)\times (p-1)$ matrix with diagonal entries
$-k_{21}$, $-k_{32}, \dots,$ $-k_{p,p-1}$, and 
$C$ is 
% an upper-triangular
% {\color{blue}
a lower-triangular,
% }, 
$(n-p) \times (n-p)$ matrix with the diagonal entries 
$\frac{d}{dt} + k_{p+2,p+1},$ $\dots, \frac{d}{dt} + k_{n,n-1},$ $\frac{d}{dt} + k_{1,n}$. 
Thus,
%Then, $\det{(A')_{1p}} %=\det{B}\det{(C-0B^{-1}0)} 
%=\det{B}\det{C}$. Therefore:\footnote{In the case of $p=2$, the first product in (\ref{RHS1}) becomes the empty product, that is, 1.} 
  \begin{equation}
   \det{(A')_{1p}}
    ~=~ (\det B)( \det C)
   ~=~ (-1)^{p-1} \left( \prod_{i=2}^{p}k_{i,i-1} \right) \prod_{j=p+1}^n \left( \frac{d}{dt}+k_{j+1,j} \right)~. \label{RHS1}
\end{equation}

\par Similar to before, the product of the binomials in~\eqref{RHS1} can be expressed in terms of elementary symmetric polynomials, this time on $E^*$: %=\{k_{p+2,p+1},..., k_{1n}\}$:

\begin{equation} \label{RHS3}
    \prod_{j=p+1}^n \left( \frac{d}{dt}+k_{j+1,j} \right) 
    ~=~ 
    \left( \frac{d}{dt}\right) ^{n-p} + e^*_1 \left( \frac{d}{dt}\right) ^{n-p-1} + \dots + e^*_{n-p-1} \frac{d}{dt} + e^*_{n-p}~. 
\end{equation}

%Combining (\ref{RHS3}) with (\ref{RHS1}), then (\ref{RHS1}) with (\ref{LHS0}) gives the final form of the RHS 
Using (\ref{RHS1}) with (\ref{RHS3}), we obtain the right-hand side of the input-output equation (\ref{LHS0}): 
\begin{equation}
    (-1)^{p+1}\det(\partial I-A)_{1p}u_{1} ~=~ 
    \prod_{i=2}^{p}k_{i,i-1}\left( \frac{d}{dt}^{n-p} + e^*_1 \frac{d}{dt}^{n-p-1} + \dots + e^*_{n-p}\right) u_1~. \label{IOfinal}
\end{equation}
\noindent
By inspection, the coefficients of~\eqref{IOfinal} match the last $(n-p+1)$ coefficients stated in the proposition, and so this completes the proof.
%The coefficients of the input-output equation (\ref{LHS0}) can now easily be extracted, where the $g_i$ are the coefficients from (\ref{LHSfinal}) and the $d_j$ are the coefficients from (\ref{IOfinal}): 
%\begin{align*}
%    g_i~=~ & e_{i} \qquad \qquad  ~~~~~~\text{ for } i=1,\ldots, n-1  \\
%    d_j ~=~ & \prod_{\ell=2}^{p}k_{\ell,\ell-1}e^*_{j} \qquad ~\text{for } j=0,..., n-p~.
%\end{align*}
\end{proof}

\begin{proposition}
Assume $n\geq 3$. Let M be an $n$-compartment cycle model with no leaks, exactly one input, and exactly one output. Then M is generically locally identifiable. \label{Output-Cycle IDable}
\end{proposition}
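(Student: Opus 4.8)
The plan is to invoke Proposition~\ref{prop:rank}: since the model has no leaks, $|E|+|Leak|=n$, so it suffices to show that the Jacobian of the coefficient map $c$ has rank $n$ at a generic point. The $n$-cycle has a rotational automorphism group, and relabeling vertices by a rotation merely permutes the parameters $k_{ij}$; as generic Jacobian rank is invariant under permuting the coordinates of the domain, I may assume $In=\{1\}$ and $Out=\{p\}$ for some $p\in\{1,\dots,n\}$. If $p=1$ (input and output in the same compartment), the model is generically locally identifiable by \cite[Proposition~4.7]{GHMS19One} (cf.\ Theorem~\ref{Catenary, cycle, mam., w/wo leak, ID}), so I assume $p\geq 2$ and apply Proposition~\ref{prop:coeff-cycle}. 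Writing $x_1=k_{21},\dots,x_{n-1}=k_{n,n-1},x_n=k_{1n}$, the coefficient map is $c=(e_1,\dots,e_{n-1},\kappa,e_1^*\kappa,\dots,e_{n-p}^*\kappa)$ with $\kappa=x_1\cdots x_{p-1}$. Because adjoining further coordinates cannot decrease rank, it is enough to exhibit one $n\times n$ submatrix of the Jacobian with nonzero determinant; I would take the rows indexed by $e_1,\dots,e_{n-1},\kappa$ and call this matrix $J'$.

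For the key step I would compare $J'$ with the Jacobian $J$ of the full family $(e_1,\dots,e_n)$, which by Lemma~\ref{lem:elem-sym-poly} satisfies $\det J=\pm V$ with $V=\prod_{1\leq i<j\leq n}(x_i-x_j)\neq 0$. The matrices $J$ and $J'$ agree in rows $1,\dots,n-1$ and differ only in the last row ($\nabla\kappa$ versus $\nabla e_n$), so expanding both determinants along the last row uses the same cofactors $w_1,\dots,w_n$:
\[
\det J=\sum_{j=1}^n\frac{\partial e_n}{\partial x_j}\,w_j=\pm V,\qquad \det J'=\sum_{j=1}^n\frac{\partial \kappa}{\partial x_j}\,w_j.
\]
To identify the $w_j$, I note that the relations $\sum_j(\partial e_i/\partial x_j)w_j=0$ for $i<n$ say that moving the roots of $P(t):=\prod_{i=1}^n(t-x_i)$ in direction $w$ fixes $e_1,\dots,e_{n-1}$, i.e.\ changes only the constant term of $P$; a short partial-fractions computation with $1/P$ then gives $w_j=V/\prod_{i\neq j}(x_j-x_i)$. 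Since $\partial\kappa/\partial x_j=\kappa/x_j$ for $j\leq p-1$ and $0$ otherwise, this yields
\[
\det J'=V\,\kappa\sum_{j=1}^{p-1}\frac{1}{x_j\prod_{i\neq j}(x_j-x_i)}~=:~V\,\kappa\,S.
\]

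In the final step it remains to check that $S$ is not the zero rational function, which I would do by a pole argument along $x_1=0$. Because $p\geq 2$, the $j=1$ term $1/\big(x_1\prod_{i\neq 1}(x_1-x_i)\big)$ has a simple pole at $x_1=0$ with residue $1/\prod_{i=2}^n(-x_i)\neq 0$, whereas every term with $2\leq j\leq p-1$ has a denominator containing the factor $(x_j-x_1)$ and is regular at $x_1=0$. Hence $S$ has a genuine pole and is nonzero, so $\det J'=V\kappa S$ is a nonzero rational function, its numerator is a nonzero polynomial, and $\det J'\neq 0$ at a generic point. Thus the Jacobian of $c$ has rank $n$, and Proposition~\ref{prop:rank} gives generic local identifiability.

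The main obstacle is the key step: pinning down the cofactor vector $w_j$ and reducing $\det J'$ to the single sum $S$; equivalently, the real content is that the extra coefficient $\kappa$ is functionally independent of $e_1,\dots,e_{n-1}$, after which the nonvanishing of $S$ is routine. I note that for $2\leq p\leq n-1$ there is a more conceptual shortcut: the coordinates of $c$ determine $\prod_{i=p+1}^n(t-x_i)$ and all but the constant term of $P(t)$, and polynomial deconvolution then recovers $e_n$ rationally from $c$, so $x\mapsto(e_1,\dots,e_n)$ factors through $c$ and $\operatorname{rank}Dc\geq\operatorname{rank}D(e_1,\dots,e_n)=n$. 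This shortcut fails at $p=n$, where no $e_j^*$ coordinates are available, which is why I would present the uniform cofactor argument as the main route.
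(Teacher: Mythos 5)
Your proof is correct, and its outer scaffolding (relabel so $In=\{1\}$, dispatch $p=1$ via Theorem~\ref{Catenary, cycle, mam., w/wo leak, ID}, then invoke Proposition~\ref{prop:coeff-cycle} and reduce to showing that the $n\times n$ Jacobian submatrix with rows $e_1,\dots,e_{n-1},\kappa$ is generically nonsingular) coincides with the paper's. The key step, however, is genuinely different. The paper argues field-theoretically: if $\kappa$ were algebraic over $\mathbb{F}=\mathbb{C}(e_1,\dots,e_{n-1})$, then---since permuting the variables fixes $\mathbb{F}$---so would be every product of $p-1$ distinct variables, and the product of all such products equals $(e_n)^{\binom{n-1}{p-2}}$, forcing $e_n$ to be algebraic over $\mathbb{F}$ and contradicting the algebraic independence of $e_1,\dots,e_n$ (Remark~\ref{rem:alternate-proof}). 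You instead compute $\det J'$ in closed form: the last-row cofactors shared with the Jacobian of $(e_1,\dots,e_n)$ are pinned down, up to a global sign, by the $n-1$ linear relations plus the normalization $\sum_j(\partial e_n/\partial x_j)w_j=\pm V$, giving $w_j=\pm V/\prod_{i\neq j}(x_j-x_i)$ (each of which is indeed a polynomial, namely $\pm$ the Vandermonde on $X\smallsetminus\{x_j\}$), whence $\det J'=\pm V\kappa S$; the pole of $S$ at $x_1=0$ exists because $p\geq 2$ guarantees the $j=1$ term is present and is the only term singular there. Every step checks out. What your route buys is an explicit determinant formula that could be reused for other ``cycle plus one monomial coefficient'' situations; what the paper's route buys is brevity and freedom from cofactor and sign bookkeeping, and it applies verbatim to any coefficient that is a product of distinct variables. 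Your closing ``deconvolution'' remark is essentially a third valid argument for $2\leq p\leq n-1$, closest in spirit to the paper's (both reconstruct $e_n$ from the coefficients), and you correctly note it fails at $p=n$, which is why the uniform argument is needed.
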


\begin{proof}
By relabeling, we may assume that $In=\{1\}$. %the input is in compartment 1.
Let $p$ denote the output compartment. If $p=1$, then the model $M$ is generically locally identifiable by Theorem \ref{Catenary, cycle, mam., w/wo leak, ID}. 

Now assume $p\neq 1$.
By Proposition \ref{prop:rank}, 
%a model is generically locally identifiable if 
we must show that 
the Jacobian matrix of the coefficient map is generically full rank. 
% By Theorem \ref{prop:coeff-cycle}, the coefficients are as follows: 
% \begin{center}
%     $(e_1, e_2,...,e_{n-1}, Le^*_0, Le^*_1,...,Le^*_{n-p})$
% \end{center}
% The total number of coefficients is $2n-p$. 
By Proposition \ref{prop:coeff-cycle}, the coefficient map $c:\mathbb{R}^n\rightarrow \mathbb{R}^{2n-p}$ is given by 
\(
    (k_{21},k_{32},\dots,k_{n,n-1},k_{1n})\longmapsto~(e_1, e_2,\dots,e_{n-1}, \kappa , e^*_1 \kappa , \dots , e^*_{n-p} \kappa),
\)
where $\kappa:=\prod_{i=2}^{p}k_{i,i-1} $.
Thus, the Jacobian matrix of $c$ is an $n\times (2n-p)$ matrix (with $2 \leq p \leq n$), and so we must show that an $(n \times n)$ submatrix has nonzero determinant.
To see this, 
it suffices to show (recall Remark~\ref{rem:alternate-proof}) 
that $e_1, e_2,\ldots,e_{n-1}, \kappa$ are algebraically independent. 
To start, the elementary symmetric polynomials $e_1, e_2, \ldots,e_{n-1}$ are algebraically independent 
(recall Remark~\ref{rem:alternate-proof}).  
Now assume for contradiction that $\kappa = 
% \prod\limits_{i=2}^{p}
\prod_{i=2}^{p}
k_{i,i-1}$ is algebraic over the field $\mathbb{F}:=\mathbb{C}(e_1,e_2,\ldots,e_{n-1})$. 
Then, by symmetry, the product of any $p-1$
distinct elements of the set
$\{k_{21},k_{32},\ldots,k_{n,n-1},k_{1n}\}$ is also algebraic over $\mathbb{F}$.
% {\color{red} why?} Since the k_ij are algebraically independent over this field. Once you have a poly over the field that kappa satisfies, can just relabel indices to get any other product of p-1 of the k_ij to satisfy that same polynomial.
By a straightforward combinatorial argument,
% EACH k_i,i-1 appears in (n-1)-choose-(p-2) such products of (p-1) elements [pick the remaining (p-2) elements from the (n-2) available k_j,j-1's]
the product
% the product of algebraic elements is itself algebraic
of all such products is $(e_n)^{\binom{n-1}{p-2}}$, so $e_n$ is algebraic over $\mathbb{F} = \mathbb{C}(e_1,e_2,\ldots,e_{n-1})$. 
This is a contradiction (see Remark~\ref{rem:alternate-proof}). %contradicts Lemma~\ref{lem:elem-sym-poly}.
% }
%Thus, $ \det \widetilde J$ is a nonzero polynomial, and t
%This completes the proof.
\end{proof}

We now prove the main result of this subsection.
\begin{proof}[Proof of Theorem \ref{Big Cycle Model theorem }]
By Proposition~\ref{Output-Cycle IDable}, the models with no leaks, exactly one input, and exactly one output are generically locally identifiable. Also, by Proposition~\ref{prop:add-leak-in-out}, adding one leak, one or more inputs, and/or one or more outputs preserves identifiability.
\end{proof}

% {\color{blue}
% NEW SUBSECTION
\subsection{Adding Leaks to a Cycle Model}\label{sec:add-leaks-to-cycle}
In the previous subsection, we saw that cycle models with one input, one output, and at most one leak are identifiable (Theorem~\ref{Big Cycle Model theorem }).  Here we present a partial converse (Theorem~\ref{thm:partial-converse-num-leaks}), which we prove at the end of the subsection.

\begin{theorem} \label{thm:partial-converse-num-leaks}
Assume $n \geq 3$.
Let $M$ be an $n$-compartment cycle model with input and output in the same compartment (and no other inputs or outputs). 
Then, $M$ is generically locally identifiable if and only if $|Leak|\leq 1$.
\end{theorem}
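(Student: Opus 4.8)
The plan is to split the biconditional into its two directions, essentially all of the work lying in the converse. The forward direction is free: if $|Leak|\le 1$, then $M$ has exactly one input and one output (hence at least one of each) together with at most one leak, so Theorem~\ref{Big Cycle Model theorem } applies directly and $M$ is generically locally identifiable. For the converse I would argue in contrapositive form: assuming $|Leak|\ge 2$, I will show that $M$ is \emph{not} generically locally identifiable. By the cyclic symmetry of the cycle graph I may relabel the compartments so that $In=Out=\{1\}$, with the leaks sitting at some subset $L\subseteq\{1,\dots,n\}$ with $|L|\ge 2$.

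The first main step is to compute the coefficient map explicitly, extending the leak-free computation of Proposition~\ref{prop:coeff-cycle} to an arbitrary leak set with coincident input and output. Writing $A'=\partial I-A$, the only effect of the leaks is to add $k_{0i}$ to the $i$-th diagonal entry, so I set $b_i:=k_{i+1,i}+k_{0i}$ when $i\in L$ and $b_i:=k_{i+1,i}$ otherwise (indices mod $n$), and $P:=\prod_{i=1}^n k_{i+1,i}$. Only the identity permutation and the full $n$-cycle contribute to $\det A'$, which gives $\det A'=\prod_{i=1}^n\!\left(\tfrac{d}{dt}+b_i\right)-P$; and since removing row and column $1$ leaves a lower-triangular matrix, $\det (A')_{11}=\prod_{i=2}^n\!\left(\tfrac{d}{dt}+b_i\right)$. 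Reading off the input-output equation $\det(A')\,y_1=\det((A')_{11})\,u_1$, the coefficients are the elementary symmetric polynomials $e_1(b),\dots,e_{n-1}(b)$ in $b_1,\dots,b_n$ (from $y_1^{(n-1)},\dots,y_1^{(1)}$), the constant term $e_n(b)-P$ (from $y_1$, now nonzero because of the leaks), and the elementary symmetric polynomials of $b_2,\dots,b_n$ (from the $u_1$-derivatives).

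The second step is the rank count. The crucial observation is that \emph{every} coefficient above is a polynomial in the $n+1$ quantities $b_1,\dots,b_n$ and $P$: the $y$- and $u$-coefficients depend only on the $b_i$, while the constant term is $e_n(b)-P$. Hence the coefficient map $c$ factors through the map $(k_{ij},k_{0i})\mapsto (b_1,\dots,b_n,P)\in\mathbb{R}^{n+1}$, so the rank of its Jacobian is at most $n+1$ everywhere, in particular generically. Since the model has $|E|+|Leak|=n+|L|\ge n+2$ parameters, the generic Jacobian rank is strictly less than $n+|Leak|$, and Proposition~\ref{prop:rank} yields that the model is not generically locally identifiable. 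Combined with the forward direction, this gives the claimed equivalence.

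I expect the main obstacle to be establishing the factorization cleanly rather than the final dimension count, which is immediate once the factorization is in hand. Concretely, the delicate points are (i) confirming that only two permutations contribute to $\det A'$, so that the leak parameters enter solely through the sums $b_i=k_{i+1,i}+k_{0i}$, and (ii) checking that the single ``edge-only'' combination surviving anywhere in the coefficients is the cycle product $P=\prod_i k_{i+1,i}$, appearing only in the constant term $e_n(b)-P$. It is precisely the presence of this nonzero constant term — which vanishes in the leak-free case by Remark~\ref{rmk:0-constant-term} — that supplies the one extra coordinate beyond the $b_i$, capping the attainable rank at $n+1$ and forcing unidentifiability as soon as two independent leak parameters must be recovered.
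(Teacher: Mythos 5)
Your proof is correct, and the backward direction is identical to the paper's (both invoke Theorem~\ref{Big Cycle Model theorem }), but your forward direction takes a genuinely different route. The paper disposes of it in one line by relabeling so that $In=Out=\{1\}$ and citing Theorem~\ref{thm:cycle-many-leaks}, whose proof (via the coefficient map of Proposition~\ref{prop:coeff-map-cycle-many-leaks}) exhibits an explicit linear dependence among the four Jacobian columns indexed by $k_{i+1,i},k_{0i},k_{j+1,j},k_{0j}$: each difference $C_{i+1,i}-C_{0i}$ and $C_{j+1,j}-C_{0j}$ is supported on the single coordinate $e_n-\prod k_{i+1,i}$. You instead observe that for $p=1$ the entire coefficient map factors through $(k_{ij},k_{0i})\mapsto(b_1,\dots,b_n,P)\in\mathbb{R}^{n+1}$, capping the Jacobian rank at $n+1<n+|Leak|$ globally; your determinant and coefficient computations check out and agree with Proposition~\ref{prop:coeff-map-cycle-many-leaks} specialized to $p=1$ (where $\kappa=1$). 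The two arguments exploit the same structural fact --- leaks enter only through the sums $b_i=k_{i+1,i}+k_{0i}$ except in the constant term --- but yours is the global, ``map factors through a lower-dimensional space'' version, which is arguably cleaner and gives a uniform (not merely generic) rank bound, while the paper's column-dependence version is what generalizes to arbitrary output position $p$ with two leaks at positions $\geq p$: your factorization would break if a leak sat at a position $\ell<p$, since then $\kappa=\prod_{i=2}^{p}k_{i,i-1}$ involves $k_{\ell+1,\ell}$ separately from $b_\ell$. For the theorem as stated ($p=1$), nothing is lost, and your argument is a valid self-contained replacement.
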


Our proof of Theorem~\ref{thm:partial-converse-num-leaks} relies on an
analysis of how the number and relative position of the leaks affects identifiability -- even when the input and output compartments are distinct (see Theorem~\ref{thm:cycle-many-leaks}).  We begin by computing the relevant coefficient map (Proposition~\ref{prop:coeff-map-cycle-many-leaks}), which can be viewed as the $Leak \neq \emptyset$ analogue of Proposition~\ref{prop:coeff-cycle}.

%Lastly, in this section we analyze identifiability of cycle models with more than one leak. 
%Earlier, we proved in Theorem \ref{Catenary, cycle, mam., w/wo leak, ID} that a cycle model with one or zero leaks is generically locally identifiable. 
%We continue along this line of inquiry {\color{teal} to show that both the number and relative position of leaks affects identifiability}. 

\begin{proposition} 
\label{prop:coeff-map-cycle-many-leaks}
Assume $n \geq 3$. 
Let $M$ be an $n$-compartment cycle model with $In=\{1\}$, $Out=\{p\}$ (for some $1 \leq p \leq n$), and 
$Leak = \{i_1,i_2,\dots, i_t \} \neq \emptyset$.
% $|Leak|=t$, where $2\leq t$ {\color{violet} maybe $1 \leq t$?}. 
Then the coefficient map 
$c:\mathbb{R}^{n+t}\rightarrow \mathbb{R}^{2n-p+1}$
is given by
% : 
\begin{equation*} %\label{eq:coeff-map-2+-leaks}
    (k_{21}, k_{32},\dots,k_{1,n},~
    k_{0, i_{1}}, 
    k_{0, i_{2}}, 
    \dots, 
        k_{0, i_{t}}) 
     \mapsto (e_1, e_2,\dots, e_{n-1}, e_n - \prod_{i=1}^n k_{i+1,i}, ~
     \kappa, 
     e^*_{1} \kappa, 
     \dots, 
     e^*_{n-p} \kappa) 
\end{equation*}

%\begin{equation}\label{eq:coeff-map-2+-leaks}
%    (k_{21}, k_{32},\dots,k_{i+1, i},\dots, k_{j+1, j},\dots, k_{0i},\dots, k_{0j},\dots,{\color{teal} ?}) \mapsto (e_1, e_2,\dots, e_n-F, K, e^*_{1}K, \dots, e^*_{n-p}K) 
%\end{equation}
%where $i,j \in Leak$, $F=\prod_{i=1}^n k_{i+1,i}$, 

%{\color{teal}
%\begin{equation}\label{eq:coeff-map-2+-leaks}
%\text{~OR:~}
%    (k_{i+1, i}~|~i\in [n])~\oplus~(k_{0i}~|~i\in Leak) \mapsto (e_1, e_2,\dots, e_n-F, K, e^*_{1}K, \dots, e^*_{n-p}K) 
%\end{equation}
%}
\noindent 
where 
$\kappa:= \prod_{i=2}^{p}k_{i,i-1}$, and $e_j$ 
and $e^*_j$ 
denote the 
%is the 
$j^{th}$ elementary symmetric polynomial on the sets 
$E = \{ k_{\ell+1,\ell} \mid \ell \in
% [n] 
\{1,\dots,n\}
\setminus Leak \} ~\cup~ \{ k_{\ell +1, \ell} + k_{0, \ell} \mid \ell \in Leak\}$ 
and 
$E^* = 
\{ k_{\ell+1,\ell} \mid 
p+1 \leq \ell \leq n,~ \ell \not\in Leak \} ~\cup~ 
\{ k_{\ell +1, \ell} + k_{0,\ell} \mid 
p+1 \leq \ell \leq n,~ \ell \in Leak \}$,
respectively.
\end{proposition}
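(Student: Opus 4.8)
The plan is to follow the proof of Proposition~\ref{prop:coeff-cycle} almost verbatim, inserting the leak parameters into the compartmental matrix and tracking where they propagate. First I would write down $A$: relative to the leak-free cycle, the only change is that for each $\ell\in Leak$ the diagonal entry $A_{\ell\ell}$ gains the summand $-k_{0,\ell}$. Setting $A':=\partial I-A$ as in~\eqref{eq:matrix-A'}, the matrix $A'$ is still lower-bidiagonal with the single corner entry $-k_{1n}$ in position $(1,n)$; its subdiagonal entries $-k_{i,i-1}$ and its corner are untouched, and its $\ell$-th diagonal entry is now $\frac{d}{dt}+\delta_\ell$, where $\delta_\ell:=k_{\ell+1,\ell}$ for $\ell\notin Leak$ and $\delta_\ell:=k_{\ell+1,\ell}+k_{0,\ell}$ for $\ell\in Leak$. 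By construction the set $\{\delta_1,\dots,\delta_n\}$ is exactly the generating set $E$ of the statement.

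Second, for the left-hand side I would expand $\det(A')$ along the first row exactly as in~\eqref{LHS5}. The two minors $(A')_{11}$ and $(A')_{1n}$ are still triangular, so only their diagonal products enter, and the computation goes through with the diagonal product now equal to $\prod_{\ell=1}^n(\frac{d}{dt}+\delta_\ell)$. This gives $\det(A')=\prod_{\ell=1}^n(\frac{d}{dt}+\delta_\ell)-\prod_{j=1}^n k_{j+1,j}$, which in terms of the elementary symmetric polynomials $e_j$ of $E$ reads $(\frac{d}{dt})^n+e_1(\frac{d}{dt})^{n-1}+\dots+e_{n-1}\frac{d}{dt}+(e_n-\prod_j k_{j+1,j})$. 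Hence $\det(A')y_p$ contributes the coefficients $e_1,\dots,e_{n-1},\,e_n-\prod_j k_{j+1,j}$. This is where the new coefficient (relative to Proposition~\ref{prop:coeff-cycle}) appears: in the leak-free case $e_n=\prod_j k_{j+1,j}$ and the constant term vanishes (Remark~\ref{rmk:0-constant-term}), whereas once a leak is present $e_n$ picks up $k_{0,\ell}$-terms and $e_n-\prod_j k_{j+1,j}$ is a nonzero polynomial.

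Third, for the right-hand side I would reuse the block decomposition $(A')_{1p}=\bigl[\begin{smallmatrix}B&0\\0&C\end{smallmatrix}\bigr]$ from Proposition~\ref{prop:coeff-cycle}. The same index bookkeeping shows the off-diagonal blocks still vanish, that $B$ is upper-triangular of size $p-1$ with diagonal $-k_{21},\dots,-k_{p,p-1}$, and that $C$ is lower-triangular of size $n-p$ with diagonal entries $\frac{d}{dt}+\delta_\ell$ for $\ell=p+1,\dots,n$. The key point is that the leak parameters enter only the diagonal of $C$ (and the superdiagonal of $B$, which does not affect $\det B$ since $B$ is triangular). Consequently $\det B=(-1)^{p-1}\kappa$ is exactly as before, while $\det C=\prod_{\ell=p+1}^n(\frac{d}{dt}+\delta_\ell)=(\frac{d}{dt})^{n-p}+e^*_1(\frac{d}{dt})^{n-p-1}+\dots+e^*_{n-p}$, with the $e^*_j$ now taken on $E^*$. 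Multiplying by the sign $(-1)^{p+1}$ as in~\eqref{IOfinal} yields the right-hand coefficients $\kappa,e^*_1\kappa,\dots,e^*_{n-p}\kappa$; adding the $n$ left-hand coefficients gives the claimed total of $2n-p+1$.

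The main obstacle is not any new idea but careful bookkeeping: I must confirm that inserting the $+k_{0,\ell}$ terms on the diagonal of $A'$ leaves the block-triangular structure of $(A')_{1p}$ intact, and in particular that the factor $\kappa$ remains leak-free because $\det B$ depends only on the subdiagonal-turned-diagonal entries $-k_{i,i-1}$. The one genuinely new feature to flag is the now-nonvanishing constant term $e_n-\prod_j k_{j+1,j}$, which is the sole source of the extra coefficient compared to the leak-free Proposition~\ref{prop:coeff-cycle}; a brief check of the empty-product conventions (giving $\kappa=1$) covers the boundary case $p=1$.
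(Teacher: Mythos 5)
Your proposal is correct and takes essentially the same route as the paper: the paper's own proof simply observes that $\partial I - A$ is obtained from the leak-free matrix~\eqref{eq:matrix-A'} by adding $k_{0\ell}$ to the $\ell$-th diagonal entry for each $\ell \in Leak$, and then declares the rest ``completely analogous'' to Proposition~\ref{prop:coeff-cycle}. Your write-up just carries out that analogy explicitly, correctly identifying the two genuinely new points -- the nonvanishing constant term $e_n - \prod_j k_{j+1,j}$ and the fact that $\det B$, hence $\kappa$, stays leak-free because the leaks land only on the superdiagonal of $B$.
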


\begin{proof}
The matrix $\partial I - A$ is obtained from the matrix~\eqref{eq:matrix-A'} by adding, for each leak $\ell \in Leak$, the term $k_{0 \ell}$ to the $\ell$-th diagonal entry.  The rest of the proof is now completely analogous to the proof of Proposition~\ref{prop:coeff-cycle}.
\end{proof}

It follows from Proposition~\ref{prop:coeff-map-cycle-many-leaks} 
that a cycle model with too many leaks is 
unidentifiable.
\begin{proposition} \label{prop:uniden-too-many-leaks}
Assume $n \geq 3$. 
Let $M$ be an $n$-compartment cycle model with $In=\{1\}$ and $Out=\{p\}$ (for some $1 \leq p \leq n$). 
If $|Leak| \geq n-p+2$, 
then $M$ is {\em not} generically locally identifiable.
%If $M$ is generically locally identifiable, then $|Leak| \leq n-p+1.$ 
\end{proposition}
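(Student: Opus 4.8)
The plan is to prove this by a direct dimension count, invoking the rank criterion of Proposition~\ref{prop:rank}. Write $t := |Leak|$, so that the parameter space is $\mathbb{R}^{|E|+|Leak|} = \mathbb{R}^{n+t}$ (an $n$-cycle has $n$ edges, and there are $t$ leak parameters). First I would read off from Proposition~\ref{prop:coeff-map-cycle-many-leaks} that the coefficient map $c$ takes values in $\mathbb{R}^{2n-p+1}$; concretely, its coordinates are $e_1,\dots,e_{n-1}$ (that is $n-1$ of them), the single coefficient $e_n - \prod_{i=1}^n k_{i+1,i}$, and $\kappa, e^*_1\kappa,\dots,e^*_{n-p}\kappa$ (that is $n-p+1$ of them), for a total of $(n-1)+1+(n-p+1)=2n-p+1$ coefficients.

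Next I would compare the two dimensions. By Proposition~\ref{prop:rank}, $M$ is generically locally identifiable if and only if the Jacobian of $c$ has rank $n+t$ at a generic point. But the rank of the Jacobian of any map into $\mathbb{R}^{2n-p+1}$ is at most $2n-p+1$. Under the hypothesis $t = |Leak| \geq n-p+2$ we obtain
\[
 n + t \;\geq\; n + (n-p+2) \;=\; 2n-p+2 \;>\; 2n-p+1~,
\]
so the number of parameters strictly exceeds the number of coefficients. Hence the generic rank of the Jacobian is at most $2n-p+1 < n+t = |E|+|Leak|$, and it can never equal $|E|+|Leak|$.

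Therefore, by Proposition~\ref{prop:rank}, $M$ is not generically locally identifiable. The whole argument is a pure dimension count: there are simply more unknown parameters than coefficients in the input-output equations, so the coefficient map cannot be generically finite-to-one. In particular, no delicate algebraic-independence computation of the kind used in the proof of Proposition~\ref{Output-Cycle IDable} is needed here. There is no real obstacle; the only thing to double-check is the bookkeeping of the coefficient count, to confirm that the target really has dimension $2n-p+1$ and not something larger that could absorb the extra leak parameters. Since that count is already supplied by Proposition~\ref{prop:coeff-map-cycle-many-leaks}, the proof reduces to the single inequality displayed above.
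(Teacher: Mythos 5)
Your proof is correct and follows essentially the same route as the paper: both arguments are a pure dimension count comparing the $n+|Leak|$ parameters against the $2n-p+1$ coordinates of the coefficient map from Proposition~\ref{prop:coeff-map-cycle-many-leaks}. The paper phrases the conclusion as the coefficient map failing to be finite-to-one, while you phrase it via the rank bound in Proposition~\ref{prop:rank}; these are equivalent here and your bookkeeping of the target dimension is accurate.
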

\begin{proof}
Assume $|Leak| \geq n-p+2$.  It follows that $n + |Leak| > 2n-p+1$, 
and so 
the coefficient map $\mathbb{R}^{n + |Leak|} \to \mathbb{R}^{ 2n-p+1}$ 
(from Proposition~\ref{prop:coeff-map-cycle-many-leaks}) 
is {\em not} finite-to-one, even outside a measure-zero set.
Thus, by definition, $M$ is {\em not} generically locally identifiable.
\end{proof}

Similarly, the location of the leaks may make a cycle model
unidentifiable.

\begin{theorem} \label{thm:cycle-many-leaks}
Assume $n \geq 3$. 
Let $M$ be an $n$-compartment cycle model with $In=\{1\}$ and $Out=\{p\}$ (for some $1 \leq p \leq n$).
If there exist leaks $i,j\in Leak$ (with $i \neq j$) such that $i,j \geq p$,
then $M$ is {\em not} generically locally identifiable. 
\end{theorem}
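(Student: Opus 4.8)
The plan is to apply the Jacobian criterion of Proposition~\ref{prop:rank} to the coefficient map $c:\mathbb{R}^{n+t}\to\mathbb{R}^{2n-p+1}$ computed in Proposition~\ref{prop:coeff-map-cycle-many-leaks}, where $t=|Leak|$, and to show that its Jacobian has rank strictly less than $n+t = |E|+|Leak|$, so that $M$ fails to be generically locally identifiable. The structural feature I would exploit is that each leak parameter $k_{0,\ell}$ enters the coefficient map only through the sum $s_\ell := k_{\ell+1,\ell}+k_{0,\ell}$ appearing in the sets $E$ and $E^*$. Accordingly, I would first perform the invertible linear change of coordinates on the domain that replaces, for each $\ell\in Leak$, the pair $(k_{\ell+1,\ell},k_{0,\ell})$ by $(k_{\ell+1,\ell},s_\ell)$. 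Since this is a diffeomorphism of $\mathbb{R}^{n+t}$, it preserves the rank of the Jacobian, and hence preserves (non-)identifiability.

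In these coordinates I would then track precisely where each edge variable $k_{\ell+1,\ell}$ with $\ell\in Leak$ and $\ell\geq p$ can occur. The elementary symmetric polynomials $e_1,\dots,e_n$ use $s_\ell$ (not $k_{\ell+1,\ell}$) for every leak $\ell$, so $k_{\ell+1,\ell}$ appears neither among $e_1,\dots,e_{n-1}$ nor in the $e_n$-part of the coefficient $e_n-\prod_i k_{i+1,i}$; likewise each $e^*_j$ uses $s_\ell$ for the leaks in $E^*$. The factor $\kappa=\prod_{i=2}^{p}k_{i,i-1}$ involves only the edge variables indexed by $1,\dots,p-1$, and $\ell\geq p$ forces $\ell\notin\{1,\dots,p-1\}$; for the same reason $k_{\ell+1,\ell}$ is absent from $E^*$ (either $\ell=p$, which is excluded from the index range of $E^*$ altogether, or $\ell\geq p+1$, in which case $E^*$ contains $s_\ell$ rather than $k_{\ell+1,\ell}$). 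The upshot is that, for a leak $\ell\geq p$, the variable $k_{\ell+1,\ell}$ appears in the entire coefficient map only inside the single product $\prod_{i=1}^n k_{i+1,i}$, through the one coefficient $f:=e_n-\prod_{i=1}^n k_{i+1,i}$.

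Given two distinct leaks $i,j\geq p$, I would then examine the two Jacobian columns corresponding to $k_{i+1,i}$ and $k_{j+1,j}$. Every coefficient other than $f$ has vanishing partial derivative with respect to both variables, so both columns lie in the one-dimensional span of the standard basis vector indexed by $f$; they are therefore linearly dependent, and the Jacobian rank is at most $n+t-1<n+t$. By Proposition~\ref{prop:rank}, $M$ is not generically locally identifiable. Equivalently, since $\partial f/\partial k_{i+1,i}=-\prod_{\ell\neq i}k_{\ell+1,\ell}$ and $\partial f/\partial k_{j+1,j}=-\prod_{\ell\neq j}k_{\ell+1,\ell}$, the map depends on $(k_{i+1,i},k_{j+1,j})$ only through their product, and one can exhibit explicit positive-dimensional fibers by the rescaling $k_{i+1,i}\mapsto\lambda k_{i+1,i}$, $k_{j+1,j}\mapsto\lambda^{-1}k_{j+1,j}$ with all $s_\ell$ held fixed.

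The step I expect to be the main obstacle is the bookkeeping in the second paragraph: one must verify carefully that $k_{i+1,i}$ and $k_{j+1,j}$ genuinely fail to appear in $\kappa$, in any $e^*_j$, and in the symmetric-function part of the coefficients, since this is exactly where the hypothesis $i,j\geq p$ is used and an off-by-one in the index ranges defining $E$ and $E^*$ would break the argument. The boundary case $\ell=p$ (a leak sitting at the output compartment), where $k_{\ell+1,\ell}$ is dropped from $E^*$ entirely rather than replaced by $s_\ell$, also deserves separate attention.
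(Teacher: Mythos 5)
Your proof is correct and takes essentially the same approach as the paper: the key observation in both is that, because every coefficient except $e_n-\prod_{i=1}^n k_{i+1,i}$ depends on the pair $(k_{\ell+1,\ell},k_{0,\ell})$ only through their sum, the appropriate column combinations of the Jacobian are supported on that single coordinate, forcing a rank deficiency when two such leaks $i,j\geq p$ exist. Your change of coordinates $(k_{\ell+1,\ell},k_{0,\ell})\mapsto(k_{\ell+1,\ell},s_\ell)$ is exactly the paper's column operation $C_{\ell+1,\ell}-C_{0\ell}$ in disguise, so the two arguments coincide.
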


\begin{proof}
For this model, the coefficient map 
$c:\mathbb{R}^{n+|Leak|}\rightarrow \mathbb{R}^{2n-p+1}$
is given in Proposition~\ref{prop:coeff-map-cycle-many-leaks}.  Let $J$ denote the $(2n-p+1) \times (n+|Leak|) $ Jacobian matrix of $c$. 
%It suffices to show that the columns of $J$ are linearly dependent.

Assume that there exist two leaks $i,j \geq p$.  
It follows that $\kappa= \prod_{\ell =2}^{p}k_{ \ell ,\ell -1}$
does not contain any of the following as factors: 
$k_{i+1,i}, k_{0i},k_{j+1,j}, k_{0j}$. 
Now it is straightforward to check from the coefficient map that, for each coefficient 
$c_{\ell}$, except for the coefficient 
$e_n - \prod_{i=1}^n k_{i+1,i}$, the partial derivatives satisfy the following equalities:
\begin{align*}
    \frac{\partial c_{\ell}}{\partial k_{i+1,i}} ~=~ 
            \frac{\partial c_{\ell}}{\partial k_{0i}}
    \quad {\rm and} \quad
    \frac{\partial c_{\ell}}{\partial k_{j+1,j}} ~=~ 
            \frac{\partial c_{\ell}}{\partial k_{0j}}~.
\end{align*}

Thus, the columns of $J$ corresponding to $k_{i+1,i}$ and $k_{0i}$, which we denote by $C_{i+1,i}$ and $C_{0i}$, respectively, are identical in every coordinate, except the one corresponding the coefficient $e_n - \prod_{i=1}^n k_{i+1,i}$.  Thus, the vector $C_{i+1,i} - C_{0i}$ 
has 0 in all but one coordinate.  By the same argument,  $C_{j+1,j} - C_{0j}$ is 0 in all but the same coordinate.  

Thus, the four columns 
$C_{i+1,i}, C_{0i},C_{j+1,j}, C_{0j}$ of the Jacobian matrix $J$ are linearly dependent.  We conclude that $J$ is {\em not} full rank, and so the model $M$ is {\em not} generically locally identifiable.
\end{proof}

We end this subsection by analyzing two cases when identifiability of a cycle model is characterized by the number of leaks.

\begin{corollary} \label{cor:in-out-distance-1}
Assume $n \geq 3$. 
Let $M$ be an $n$-compartment cycle model with $Input=\{i\}$ and $Output=\{i-1 \mod n\}$ (for some $1 \leq i \leq n$). 
Then $M$ is generically locally identifiable if and only if $|Leak|\leq 1.$
\end{corollary}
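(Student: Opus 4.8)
The plan is to handle the two directions separately: the forward implication is immediate from the main theorem of the previous subsection, while the converse reduces to the dimension count already established.

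For the ``if'' direction, suppose $|Leak| \le 1$. Then $M$ is an $n$-compartment cycle model with at least one input, at least one output, and at most one leak, so Theorem~\ref{Big Cycle Model theorem } applies verbatim and $M$ is generically locally identifiable. There is nothing further to do here.

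For the ``only if'' direction I would prove the contrapositive: if $|Leak| \ge 2$, then $M$ is not generically locally identifiable. The first step is to normalize the labeling. Identifiability is invariant under relabeling of compartments (as used, e.g., in the proof of Proposition~\ref{Output-Cycle IDable}), and a cyclic rotation of the vertex labels is an automorphism of the cycle graph. So I may rotate the labels to send the input compartment $i$ to compartment $1$. Under this rotation the output compartment $i-1 \bmod n$ is carried to compartment $n$, giving $In=\{1\}$ and $Out=\{p\}$ with $p=n$ in the new labeling. This normalization is the one point that genuinely needs checking: the hypothesis places the output exactly one step ``behind'' the input around the cycle, so once the input sits at compartment $1$ the output must land at compartment $n$.

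The second step is simply to invoke Proposition~\ref{prop:uniden-too-many-leaks} with $p=n$. Its threshold then becomes $n-p+2 = 2$, so the assumption $|Leak| \ge 2$ is precisely the hypothesis $|Leak| \ge n-p+2$ required there, and we conclude that $M$ is not generically locally identifiable. I expect no substantive obstacle in this argument; the only subtlety worth flagging is that the companion result Theorem~\ref{thm:cycle-many-leaks} cannot be substituted for Proposition~\ref{prop:uniden-too-many-leaks} here, since with $p=n$ it would demand two \emph{distinct} leaks $i,j \ge n$, which is impossible. Thus the force of the converse comes entirely from the fact that, when the output is adjacent to the input, the codomain of the coefficient map is too small to accommodate a second leak parameter.
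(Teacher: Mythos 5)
Your proof is correct and follows essentially the same route as the paper's: the backward direction via Theorem~\ref{Big Cycle Model theorem }, and the forward direction by relabeling so that $In=\{1\}$, $Out=\{n\}$ and invoking Proposition~\ref{prop:uniden-too-many-leaks} with $p=n$. The extra observation that Theorem~\ref{thm:cycle-many-leaks} cannot be substituted here is accurate but not needed.
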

\begin{proof}
The backward direction ($\Leftarrow$) follows from Theorem~\ref{Big Cycle Model theorem }.
For the forward direction ($\Rightarrow$), we can relabel compartments so that $In=\{1\}$ and $Out =\{n\}$, 
so, by Proposition~\ref{prop:uniden-too-many-leaks}, $|Leak| \leq 1.$
\end{proof}

\begin{proof}[Proof of Theorem~\ref{thm:partial-converse-num-leaks}]
One direction ($\Leftarrow$) is immediate from Theorem~\ref{Big Cycle Model theorem }.
For the other direction ($\Rightarrow$), by relabeling, we may assume that $In=Out=\{1\}$, and so the desired result follows directly from Theorem~\ref{thm:cycle-many-leaks}.
\end{proof}
% }

\subsection{Adding Incoming and  Outgoing Edges to Cycle Models} \label{sec:new-edges}
In this section, we introduce a new class of linear compartment models, which can be viewed as a hybrid between cycle and mammillary models, as they are constructed by adding certain ``incoming'' or ``outgoing'' edges 
(like those in the mammillary model) to a cycle model. 

We prove that 
when all incoming edges or all outgoing edges are added, and there is at most one leak, 
the resulting model is identifiable
(Theorem~\ref{thm:fin-wing-0-1-leak-id}).
Afterward, we consider the case of adding only a subset of the incoming edges or outgoing edges.  
Specifically, we compute the coefficient maps (Propositions~\ref{prop:coeff-fin} and~\ref{prop:coeff-wing}) 
and assess identifiability (Theorem~\ref{thm:fin-wing-partial}).

In what follows, we refer to an edge $j\rightarrow i$ by its edge-label parameter $k_{ij}$.
%will use the parameter $k_{ij}$ to refer to the edge $j\rightarrow i$. 

\begin{definition}\label{def:incoming-outgoing-edges}
Consider an $n$-compartment model. 
\begin{packed_enum}
    \item An \emph{incoming edge} is an edge from compartment $i$ \emph{to} compartment 1, where $i\in \{2,3,\dots,n-1\}$. The set of all incoming edges is 
$\{k_{12},k_{13},\dots,k_{1,n-1}\}$.
    \item An \emph{outgoing edge} is an edge \emph{from} compartment 1 to compartment $j$, where $j\in \{3,4,\dots,n\}$. The set of all outgoing edges is 
$\{k_{31},k_{41},\dots,k_{n1}\}$.
%    \item An \emph{interior edge} is either an incoming edge or an outgoing edge. 
    %The set of all \emph{interior edges} is $\{k_{12},k_{13},k_{14},\dots,k_{1,n-1};~k_{31},k_{41},\dots,k_{n1}\}$.
\end{packed_enum}
\end{definition}

\emph{Fin} and {\em Wing} models are obtained from cycle models 
%with $In=Out=\{1\}$ and $Leak=\emptyset$, 
by adding all incoming (respectively, outgoing) edges; see Figure~\ref{fig:fin-and-wingmodel} 
for the underlying graphs, denoted by ${\rm Fin}_n$ and ${\rm Wing}_n$.

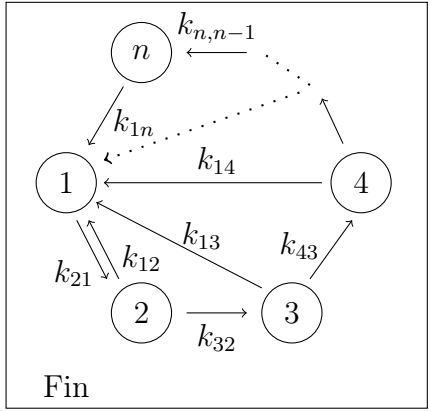
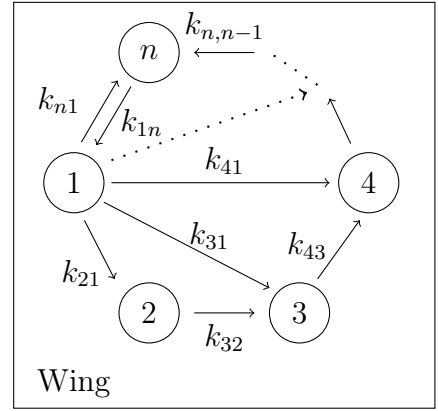
\begin{figure}[ht!]
\subfigure[The Fin graph (${\rm Fin}_n$)]{\label{fig:fin-model}
    \centering
	\begin{tikzpicture}[scale=2]
 %-----------------------------
 % FIN 
 %-----------------------------
  	\draw (-1,0) circle (0.2);	
 % 	\draw (1,0) circle (0.2);	
  	\draw (-0.5,0.866) circle (0.2);	
  	\draw (0.5,-0.866) circle (0.2);	
  	\draw (-0.5,-0.866) circle (0.2);
  	\draw (.96, 0) circle (0.2);
 %	\draw (0.5,0.866) circle (0.2);	
 % ARROWS%
    \draw[->] (-0.93, -0.25) -- (-0.73, -0.64 );
  	 \draw[->]  (-0.2,-0.866) -- (0.2,-0.866) ;
 	 \draw[->]  (0.62, -0.64) -- (0.9,-0.25); 
 	 \draw[loosely dotted,thick] (0.62,0.65) -- (0.3,0.866);
 	 \draw[->]  (0.2,0.866) -- (-0.2,0.866);
 	 \draw[->] (-0.62, 0.64 ) -- (-0.85, 0.25);
 	 \draw[->] (.84, .26) -- (.7, .56);  
 	 \draw[loosely dotted,thick, ->] (0.55, 0.6) -- (-.75, .15);
 %	 \draw[->]  (-0.5,0.866)-- (-1,0);
 %	 \draw[->] (-0.8, 0) -- (-0.7, -0.9);	
 %incoming ARROWS%
    \draw[->] (-.65, -0.64) -- (-0.85, -.25);
    \draw[->] (0.3, -.7) -- (-.8, -.13);
    \draw[->] (.7, 0) -- (-.75, 0);
 % RATES
   	 \node[] at (0.0, -1.03) {$k_{32}$};
   	 \node[] at (0.0, 1.03) {$k_{n,n-1}$};
   	 \node[] at (-0.95, -0.6) {$k_{21}$};
   	 \node[] at (-0.55, 0.4) {$k_{1n}$};
   	 \node[] at (0.55, -0.42) {$k_{43}$};
   	 \node[] at (0, .15) {$k_{14}$};
   	 \node[] at (-.1, -.35) {$k_{13}$};
   	 \node[] at (-.5, -.5) {$k_{12}$};
 % LABEL: numbers
     	\node[] at (-1, 0) {1};
     	\node[] at  (-0.5,-0.866) {$2$};
     	\node[] at  (+0.5,-0.866) {$3$};
     	\node[] at (-0.5,0.866) {$n$};
     	\node[] at (.96, 0) {$4$};
%  %OUTPUT
%   	\draw (-1.33,-.49) circle (0.05);	
%  	 \draw[-] (-1, -.2 ) -- (-1.3, -.45);	
%  	 \node[] at (-1.65, -.49) {out};
%  % Input
%  	 \draw[->] (-1.7, 0) -- (-1.3, 0);	
%   	 \node[] at (-1.9, 0) {in};
 % LABEL: Model name
    %  	\node[above] at (-1.8, -1.5) {Fin};
     	\node[above] at (-1, -1.5) {Fin};
  % BOX
%  \draw (-2.2,-1.5) rectangle (1.4, 1.2);
    \draw (-1.4,-1.5) rectangle (1.4, 1.2);
 \end{tikzpicture}  
 }
\hfill %
\subfigure[The Wing graph (${\rm Wing}_n$)]{\label{fig:wing-model}
    \centering
	\begin{tikzpicture}[scale=2]
 %-----------------------------
 % WING 
 %-----------------------------
  	\draw (-1,0) circle (0.2);	
 % 	\draw (1,0) circle (0.2);	
  	\draw (-0.5,0.866) circle (0.2);	
  	\draw (0.5,-0.866) circle (0.2);	
  	\draw (-0.5,-0.866) circle (0.2);
  	\draw (.96, 0) circle (0.2);
 %	\draw (0.5,0.866) circle (0.2);	
 % ARROWS%
    \draw[->] (-0.93, -0.25) -- (-0.73, -0.64 );
  	 \draw[->]  (-0.2,-0.866) -- (0.2,-0.866) ;
 	 \draw[->]  (0.62, -0.64) -- (0.9,-0.25); 
 	 \draw[loosely dotted,thick] (0.62,0.65) -- (0.3,0.866);
 	 \draw[->]  (0.2,0.866) -- (-0.2,0.866);
 	 \draw[->] (-0.62, 0.64 ) -- (-0.85, 0.25);
 	 \draw[->] (.84, .26) -- (.7, .56);  
 	 \draw[loosely dotted,thick, <-] (0.55, 0.6) -- (-.75, .15);
 %	 \draw[->]  (-0.5,0.866)-- (-1,0);
 %	 \draw[->] (-0.8, 0) -- (-0.7, -0.9);	
 %incoming ARROWS%
    \draw[<-] (-.71, 0.68) -- (-0.95, .27);
    \draw[<-] (0.3, -.7) -- (-.8, -.13);
    \draw[<-] (.7, 0) -- (-.75, 0);
 % RATES
   	 \node[] at (0.0, -1.03) {$k_{32}$};
   	 \node[] at (0.0, 1.03) {$k_{n,n-1}$};
   	 \node[] at (-0.95, -0.6) {$k_{21}$};
   	 \node[] at (-0.55, 0.4) {$k_{1n}$};
   	 \node[] at (0.55, -0.42) {$k_{43}$};
   	 \node[] at (0, .15) {$k_{41}$};
   	 \node[] at (-.1, -.35) {$k_{31}$};
   	 \node[] at (-1.1, 0.55) {$k_{n1}$};
 % LABEL: numbers
     	\node[] at (-1, 0) {1};
     	\node[] at  (-0.5,-0.866) {$2$};
     	\node[] at  (+0.5,-0.866) {$3$};
     	\node[] at (-0.5,0.866) {$n$};
     	\node[] at (.96, 0) {$4$};
%  %OUTPUT
%   	\draw (-1.33,-.49) circle (0.05);	
%  	 \draw[-] (-1, -.2 ) -- (-1.3, -.45);	
%  	 \node[] at (-1.65, -.49) {out};
%  % Input
%  	 \draw[->] (-1.7, 0) -- (-1.3, 0);	
%   	 \node[] at (-1.9, 0) {in};
 % LABEL: Model name
    %  	\node[above] at (-1.8, -1.5) {Wing};
     	\node[above] at (-1, -1.5) {Wing};
  % BOX
%  \draw (-2.2,-1.5) rectangle (1.4, 1.2);
 \draw (-1.4,-1.5) rectangle (1.4, 1.2);
 \end{tikzpicture}
 }
    \caption{The Fin and Wing graphs with $n$ compartments.}
%    \caption{The $n$-compartment Fin model (Figure~\ref{fig:fin-model}) is obtained from the cycle model by adding all \emph{incoming} edges. The $n$-compartment Wing model (Figure~\ref{fig:wing-model}) is obtained from the cycle model by adding all \emph{outgoing} edges.}
    \label{fig:fin-and-wingmodel}
\end{figure}

% \begin{conjecture}[Fin and Wing are identifiable] \label{conj:fin-wing}
% For $n \geq 3$, the $n$-compartment Fin and Wing models (Figure~\ref{fig:fin-and-wingmodel}) are generically locally identifiable.
% \end{conjecture}

\begin{theorem}\label{thm:fin-wing-0-1-leak-id}
Let $n \geq 3$, and let $M$ be an $n$-compartment 
Fin model  
$({\rm Fin}_n, In, Out, Leak)$ 
or Wing model 
$({\rm Wing}_n, In, Out, Leak)$, 
with $In=Out=\{1\}$ and at most one leak.
%{\color{teal}Wait, but the way we define Fin/Wing models following Def~\ref{def:incoming-outgoing-edges} is by taking In=Out=\{1\} and no leaks} 
Then $M$ is generically locally identifiable.
\end{theorem}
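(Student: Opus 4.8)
The plan is to reduce to the leak-free case and then show that the Jacobian of the coefficient map is generically nonsingular, by exploiting a block-triangular structure together with a degree argument on the left-hand side of the input-output equation.

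First, by Proposition~\ref{prop:add-leak-in-out}(1), adding one leak preserves identifiability, so it suffices to treat the case $Leak=\emptyset$. In that case both ${\rm Fin}_n$ and ${\rm Wing}_n$ have exactly $|E|=2n-2$ edges, and since $In=Out=\{1\}$ there is a single input-output equation $\det(\partial I-A)\,y_1=\det\big((\partial I-A)_{11}\big)\,u_1$. The left-hand polynomial $\det(\partial I-A)$ is monic of degree $n$ with zero constant term (Remark~\ref{rmk:0-constant-term}), contributing $n-1$ coefficients, and the right-hand polynomial $\det((\partial I-A)_{11})$ is monic of degree $n-1$, contributing another $n-1$; hence the coefficient map is a map $\mathbb{R}^{2n-2}\to\mathbb{R}^{2n-2}$, and by Proposition~\ref{prop:rank} it suffices to show its Jacobian is generically of full rank $2n-2$.

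The key structural observation is that $(\partial I-A)_{11}$ is triangular for both graphs, since deleting compartment $1$ breaks the single ``long'' connection that would create an off-triangular entry. Thus the right-hand coefficients are the complete list of elementary symmetric polynomials $e_1,\dots,e_{n-1}$ in a set of $n-1$ effective rates: for ${\rm Wing}_n$ these are the cycle rates $k_{32},\dots,k_{n,n-1},k_{1n}$, while for ${\rm Fin}_n$ they are the sums $k_{j+1,j}+k_{1j}$ (for $2\le j\le n-1$) together with $k_{1n}$. I would then split the $2n-2$ parameters into two groups of size $n-1$: a ``symmetric'' group, on which the right-hand coefficients depend, and a complementary group. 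For ${\rm Wing}_n$ the complementary group is $\{k_{21},k_{31},\dots,k_{n1}\}$, which does not appear on the right-hand side at all; for ${\rm Fin}_n$ one first passes to coordinates given by the effective rates, after which the complementary group is $\{k_{21},k_{12},\dots,k_{1,n-1}\}$. In either case the right-hand coefficients are independent of the complementary group, so the Jacobian is block triangular and its determinant factors as the Jacobian of $e_1,\dots,e_{n-1}$ in the effective rates, times the Jacobian of the left-hand coefficients in the complementary group. The first factor is nonzero by Lemma~\ref{lem:elem-sym-poly} applied to $n-1$ variables.

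It remains to show the second factor is a nonzero polynomial, and this is the heart of the argument. Here I would compute $\det(\partial I-A)$ by the permutation (cycle-cover) expansion. Because every directed cycle of ${\rm Fin}_n$ or ${\rm Wing}_n$ passes through compartment $1$, any cycle cover uses at most one nontrivial cycle, so $\det(\partial I-A)$ is the product of the diagonal entries minus a sum of terms of the form (weight of one cycle through compartment $1$) times (product of the remaining diagonal entries). Differentiating this expression with respect to each complementary parameter produces, for each parameter, an explicit polynomial in $d/dt$, and the crux is that these $n-1$ derivative polynomials have pairwise distinct degrees $1,2,\dots,n-1$ in $d/dt$, with generically nonzero leading coefficients. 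Distinct degrees force their coefficient vectors to be triangular, hence linearly independent, so the second Jacobian factor is a nonzero polynomial and the model is identifiable. The main obstacle is the bookkeeping in this final step for ${\rm Fin}_n$: the change to effective-rate coordinates couples each cycle rate $k_{j+1,j}$ with the incoming rate $k_{1j}$, so one must track carefully which cycle terms each $k_{1j}$ enters and check that the contribution of the explicitly appearing edge dominates in degree; the analogous computation for ${\rm Wing}_n$ is cleaner because the complementary parameters never enter the right-hand side.
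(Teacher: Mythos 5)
Your proposal is correct in outline but takes a genuinely different route from the paper's. The paper's proof is a two-line reduction: add \emph{all} leaks, observe that ${\rm Fin}_n$ and ${\rm Wing}_n$ satisfy $|E|=2|V|-2$ and are inductively strongly connected with respect to compartment $1$ (via the orderings $1,2,\dots,n$ and $1,n,n-1,\dots,2$), so the all-leak models are identifiable cycle models in the sense of Definition~\ref{def:id-cycle-model} by~\cite{MeshkatSullivant} and~\cite[Remark 1]{MSE15Two}; Proposition~\ref{prop:id-cycle-model-0-1-leaks} then strips off all leaks or all but one. You instead compute the coefficient map and analyze its Jacobian directly, which in effect re-derives Propositions~\ref{prop:coeff-fin} and~\ref{prop:coeff-wing} and then runs a block-triangular argument close in spirit to the paper's proof of Theorem~\ref{thm:fin-wing-partial}. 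The paper's route buys brevity and a conceptual criterion (inductive strong connectivity); yours buys self-containedness -- no appeal to the Meshkat--Sullivant machinery -- plus an explicit product formula for the Jacobian determinant. Your reductions check out: the passage to effective rates $K_j=k_{1j}+k_{j+1,j}$ for ${\rm Fin}_n$ is an invertible linear change of coordinates and so preserves the generic rank, and every directed cycle of either graph does pass through compartment $1$, so the determinant expansion has the form you describe. One inaccuracy to repair: for ${\rm Wing}_n$ the derivatives $\partial\det(\partial I-A)/\partial k_{j1}$ do \emph{not} have distinct degrees -- since $\partial K/\partial k_{j1}=1$ for every $j$, each such derivative equals $\det\bigl((\partial I-A)_{11}\bigr)$ minus a term of degree $j-2$, so all have degree $n-1$. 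You must first subtract the $k_{21}$-column from the other columns (a determinant-preserving operation), after which the columns have distinct top degrees $n-1,1,2,\dots,n-2$ and your triangularity argument applies verbatim; for ${\rm Fin}_n$ the degrees $n-1,n-2,\dots,1$ are distinct as claimed because the $\det\bigl((\partial I-A)_{11}\bigr)$ term survives only in the $k_{21}$-derivative. With that one-line fix, your argument is a valid alternative proof.
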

\begin{proof}
It is straightforward to check that, 
when all leaks are added,
the Fin and Wing models are identifiable cycle models  (Definition~\ref{def:id-cycle-model}).
In particular, 
condition~(4) is satisfied 
because 
the graphs ${\rm Fin}_n$ and ${\rm Wing}_n$ 
satisfy
$|E|=2|V|-2$ and
are inductively strongly connected with respect to compartment 1 and the orderings $\{1, 2, \dots, n\}$ and $\{1, n, n-1, \dots, 2\}$, respectively~\cite[Remark 1]{MSE15Two}. 
%{\color{violet} Cite a result (state earlier in paper?) that says that inductively strongly connected implies identifiable cycle model?} 
%{\color{teal}Actually ``Inductively strongly connected \textit{and} $|E|=2|V|-2$ implies identifiable cycle model'', I think.}
So, by Proposition~\ref{prop:id-cycle-model-0-1-leaks}, $M$ is generically locally identifiable.
\end{proof}

When only \textit{some} incoming (respectively, outgoing) edges are added, the resulting submodel of a Fin (respectively, Wing) model fails to be inductively strongly connected. 
Thus, to check whether such a model is an identifiable cycle model (Definition~\ref{def:id-cycle-model}), we must compute and analyze the coefficient map.
%{\color{red}
%In other words, these submodels are not identifiable cycle models {\color{violet}or at least it's not easy to check if the model is an identifiable cycle model?} and so we cannot apply Proposition~\ref{prop:id-cycle-model-0-1-leaks} to show identifiability. 
%{\color{teal}How about: ``In other words, for these submodels, Definition~\ref{def:id-cycle-model}.4 is not straightforward to check if they are identifiable cycle models without computing the coefficient maps.''}
Accordingly, we next compute the coefficient maps of Fin and Wing models (Propositions~\ref{prop:coeff-fin} and \ref{prop:coeff-wing}) -- the coefficient maps for submodels are then obtained by setting some rate constants to 0 -- and then prove identifiability of submodels of Fin and Wing with only one or two incoming edges or outgoing edges (Theorem~\ref{thm:fin-wing-partial}).

%\subsubsection{Identifiability of Fin and Wing Models}
We begin by computing the coefficient maps for Fin and Wing models. The proofs are similar, but the differences are subtle enough that we must prove them separately. 

\begin{proposition}[Coefficient map for Fin] \label{prop:coeff-fin}
Assume $n\geq 3$. 
For $2 \leq \ell \leq n-1$, define the following expressions: 
\begin{packed_enum}
    \item 
    % {\color{blue} 
    $e_m^{[\ell]}$
    % } 
    is the $m^{\text{th}}$ elementary symmetric polynomial\footnote{By convention, 
    % {\color{blue} 
    $e^{[\ell]}_0:=1$.
    % }.
    } on 
$
% {\color{blue} 
E^{[\ell]}
% } 
:=\{ k_{1\ell}+k_{\ell+1,\ell},~k_{1,\ell+1}+k_{\ell+2,\ell+1},~ \ldots ~, ~k_{1,{n-1}}+k_{n,n-1}, ~k_{1n}\}$, 
    \item $P_{\ell}:=k_{1{\ell}}(k_{21}k_{32}\cdots k_{{\ell}, {\ell}-1})$, and
    \item 
%        \begin{align*}
    $\phi_{\ell} := 
    % {\color{blue} 
    e_{\ell} ^{[2]}
    % } 
    +k_{21} 
    % {\color{blue} 
    e^{[2]}_{\ell - 1}
    % } 
    - \sum \limits_{i=2}^{{\ell}} P_i
    % {\color{blue} 
    e_{{\ell}-i}^{[i+1]}$.
    % }$.
%    \end{align*}
\end{packed_enum}
Then, for the $n$-compartment model 
$({\rm Fin}_n, In, Out, Leak)$ 
with $In=Out=\{1\}$ and $Leak=\emptyset$,
%{\color{red} Fin model in Figure~\ref{fig:fin-model}}, 
the coefficient map $c:\mathbb{R}^{2n-2}\rightarrow \mathbb{R}^{2n-2}$ is given by 
\begin{align} \label{eq:coeff-map-fin}
     (k_{21},k_{32},k_{43}, \dots, k_{1n},~ k_{12}, k_{13}, \dots ,k_{1,n-1}) 
    ~ \mapsto ~
    %RHS coeffs
    \bigg( 
    % {\color{blue} 
    e_1^{[2]},~ e_2^{[2]},~ \ldots, ~ e_{n-1}^{[2]},
    %LHS coeffs
    ~e_1^{[2]} 
    % } 
    +k_{21}~,
    \phi_2,~\phi_3, \dots, ~\phi_{n-1}
    \bigg)~.
\end{align}
\end{proposition}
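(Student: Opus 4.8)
The plan is to evaluate the two determinants occurring in the input-output equation and read off their coefficients. Since $In=Out=\{1\}$, Proposition~\ref{Input-Output Equation Definition} yields the single input-output equation $\det(\partial I-A)\,y_1=\det\big((\partial I-A)_{11}\big)\,u_1$, so the coordinates of the coefficient map are the non-monic coefficients of $\det(A')$ and of $\det\big((A')_{11}\big)$, where $A':=\partial I-A$. First I would write $A'$ explicitly: its $(1,1)$ entry is $\frac{d}{dt}+k_{21}$; its $(i,i)$ entry is $\frac{d}{dt}+k_{i+1,i}+k_{1i}$ for $2\le i\le n-1$ and $\frac{d}{dt}+k_{1n}$ for $i=n$; its subdiagonal entries are the $-k_{i,i-1}$; and its first row carries the incoming-edge entries $-k_{1j}$ in columns $j=2,\dots,n$, with all other entries zero.

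The easy determinant is $\det\big((A')_{11}\big)$. Deleting row and column $1$ removes precisely the incoming-edge entries from the first row, leaving a lower-triangular matrix whose diagonal entries are exactly $\frac{d}{dt}+s$ for $s$ ranging over $E^{[2]}$. Hence $\det\big((A')_{11}\big)=\prod_{s\in E^{[2]}}\big(\frac{d}{dt}+s\big)$, whose non-leading coefficients are $e_1^{[2]},\dots,e_{n-1}^{[2]}$; these account for the first $n-1$ coordinates of the asserted map.

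The substantive computation is $\det(A')$, which I would obtain by cofactor expansion along the first row. The key observation is that for each $j\ge 2$ the minor $\det\big((A')_{1j}\big)$ has a \emph{unique} nonzero term in its Leibniz expansion: rows $2,\dots,n$ have support only on the two adjacent columns $\{i-1,i\}$, so once column $j$ is deleted the only perfect matching sends rows $2,\dots,j$ to columns $1,\dots,j-1$ and rows $j+1,\dots,n$ to columns $j+1,\dots,n$. In submatrix coordinates this matching is the identity, so its sign is $+1$, and evaluating the matched entries gives $\det\big((A')_{1j}\big)=(-1)^{j-1}\big(\prod_{i=2}^{j}k_{i,i-1}\big)\prod_{s\in E^{[j+1]}}\big(\frac{d}{dt}+s\big)$. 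Folding in the cofactor signs $(-1)^{1+j}$ and the entries $-k_{1j}$ (and using $(-1)^{1+j}(-1)^{j-1}=1$), the first-row expansion collapses to
\begin{equation*}
\det(A')=\Big(\tfrac{d}{dt}+k_{21}\Big)\prod_{s\in E^{[2]}}\Big(\tfrac{d}{dt}+s\Big)-\sum_{j=2}^{n}P_j\prod_{s\in E^{[j+1]}}\Big(\tfrac{d}{dt}+s\Big).
\end{equation*}

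Finally I would extract coefficients. The coefficient of $\big(\frac{d}{dt}\big)^{\,n-m}$ in the displayed expression is $e_m^{[2]}+k_{21}e_{m-1}^{[2]}-\sum_{j=2}^{m}P_j\,e_{m-j}^{[j+1]}$ (the upper summation limit drops from $n$ to $m$ because $e_r^{[\ell]}=0$ for $r<0$), which equals $e_1^{[2]}+k_{21}$ when $m=1$ and is exactly $\phi_m$ when $2\le m\le n-1$; the constant term ($m=n$) vanishes by Remark~\ref{rmk:0-constant-term}, as $M$ has no leaks. These coefficients furnish the last $n-1$ coordinates of the map, completing the identification. I expect the main obstacle to be the minor computation of the previous paragraph—arguing that the matching is forced and pinning down its sign—together with the elementary-symmetric-polynomial bookkeeping needed to recognize the resulting coefficients as the $\phi_m$; the remaining steps are routine.
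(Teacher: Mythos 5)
Your proposal is correct and follows essentially the same route as the paper's proof: the same triangularity observation for $(A')_{11}$, the same cofactor expansion of $\det(A')$ along the first row, the same evaluation of the minors $(A')_{1j}$ (the paper reads them off as block-diagonal matrices with triangular blocks, which is the same computation as your forced-matching argument), and the same bookkeeping to recognize the coefficients of $(d/dt)^{n-m}$ as $\phi_m$. Your explicit appeal to Remark~\ref{rmk:0-constant-term} for the vanishing constant term is a small point the paper leaves implicit, but otherwise the two arguments coincide.
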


\begin{proof}
The matrix $A':=(\partial I - A)$ is 
\begin{center}
    $A'=\begin{bmatrix}
     \frac{d}{dt}+k_{21} & -k_{12} & -k_{13} & \dots & -k_{1,n-1} & -k_{1n}\\
    -k_{21} & \frac{d}{dt}+(k_{12}+k_{32}) & 0 &\dots &  &0\\
    0 & -k_{32} & \frac{d}{dt}+(k_{13}+k_{43})  \\
    0 & 0 & -k_{43} & \ddots&  & \vdots\\
    \vdots & \vdots & &  & \frac{d}{dt}+(k_{1,n-1}+k_{n,n-1}) & 0\\
    0 & 0 & \dots & 0 & -k_{n,n-1} & \frac{d}{dt}+k_{1n}\\ 
    \end{bmatrix}$. 
\end{center}

\noindent The matrix $(A')_{11}$ is lower triangular, with diagonal entries $(d/dt + k_{12}+k_{32}),~(d/dt + k_{13}+k_{43}),\ldots,~(d/dt + k_{1,n-1}+k_{n,n-1}),~(d/dt + k_{1n})$. Hence, the non-monic coefficients of the right-hand side of the input-output equation~(\ref{eq:i-o}) are the elementary symmetric polynomials 
$
% {\color{blue} 
e_1^{[2]}, ~ e_2^{[2]}, \ldots, ~ e_{n-1}^{[2]} 
% }
$ on 
$
% {\color{blue}  
E^{[2]} 
% } 
=\{k_{12}+k_{32},\ldots,~k_{1,{n-1}}+k_{n,n-1},~k_{1n}\}$, 
and these are the first $n-1$ coefficients in~(\ref{eq:coeff-map-fin}).

To obtain the left-hand side of~(\ref{eq:i-o}), we expand along the first row of $A'$:
\begin{align} \label{eq:LHS-fin}
    \det A' ~&=~ \left( \frac{d}{dt} + k_{21} \right) \det (A')_{11} 
    % \\
    \notag
        % & \quad \quad \quad 
        + k_{12} \det (A')_{12} 
        - k_{13} \det (A')_{13} 
        + \dots 
        + (-1)^{n} k_{1n} \det (A')_{1n} 
    \\ 
    % \notag
    ~&=~ \left( \frac{d}{dt} + k_{21} \right) \left( \left(\frac{d}{dt} \right) ^{n-1} + \left(\frac{d}{dt}\right)^{n-2} 
    % {\color{blue} 
    e_1^{[2]} 
    % } 
    + \dots + 
    % {\color{blue} 
    e_{n-1}^{[2]} 
    % }
    \right) + \sum_{j=2}^n (-1)^{j} k_{1j} \det (A')_{1j}~.
\end{align}

\noindent 
Letting $K_j:=k_{1,j}+k_{j+1,j}$ (for $2 \leq j \leq n-1$) and $K_n:=k_{1n}$,
we obtain: 
% %without subsitution K_j
% \begin{align*} 
% % (A')_{1j} ~=~ 
% \left[
% \begin{array}{cccc|cccc}
% -k_{21} & \frac{d}{dt}+k_{12}+k_{32} & & & 0 & \dots & & 0\\
% 0 & -k_{32} & \ddots & & & & \\
% \vdots &  & \ddots & \frac{d}{dt}+(k_{1,j-1}+k_{j,j-1}) & & & \\
% 0 & \dots &  & -k_{j,j-1} & 0 & \dots & & 0\\ \hline
% 0 & \dots & & 0 & \frac{d}{dt}+(k_{1,j+1}+k_{j+2,j+1}) & 0 & \dots & 0\\ 
% \vdots &  & & \vdots & -k_{j+2,j+1} &  &  & \vdots\\
% & & & & &\ddots & \ddots & 0 \\
% 0 &  \dots & & 0    `&  & & -k_{n,n-1} & \frac{d}{dt}+k_{1n}\\ 
% \end{array}
% \right]
% \end{align*}

% %with subsitution K_j
\begin{align*} 
(A')_{1j} ~=~ 
\left[
\begin{array}{cccc|cccc}
-k_{21} & \frac{d}{dt}+K_2 & & & 0 & \dots & & 0\\
0 & -k_{32} & \ddots & & \vdots & & & \vdots \\
\vdots &  & \ddots & \frac{d}{dt}+K_{j-1} & & & \\
0 & \dots &  & -k_{j,j-1} & 0 & \dots & & 0\\ \hline
0 & \dots & & 0 & \frac{d}{dt}+K_{j+1} & 0 & \dots & 0\\ 
\vdots &  & & \vdots & -k_{j+2,j+1} & \ddots &  & \vdots\\
& & & & &\ddots & \frac{d}{dt}+K_{n-1} & 0 \\
0 &  \dots & & 0 &  & & -k_{n,n-1} & \frac{d}{dt}+K_n\\ 
\end{array}
\right]~.
\end{align*}

We see that $(A')_{1j}$ is block diagonal, and both blocks are triangular, so we have:
%We see that $\det{(A')_{1j}}$ is the product of the determinants of two block matrices -- the $(j-1)\times (j-1)$ upper-triangular matrix
%and the $(n-j)\times (n-j)$ lower-triangular matrix:
\begin{align*}
    \det{(A')_{1j}}~&=~
    (-1)^{j-1}(k_{21}k_{32}\cdots k_{j,j-1}) \cdot \left(\frac{d}{dt}+K_{j+1}\right)\left(\frac{d}{dt}+K_{j+2}\right) \cdots % \left(\frac{d}{dt}+K_{n-1}\right) 
    \left(\frac{d}{dt}+K_n\right)
    \\ \notag
    ~&=~ (-1)^{j-1}\frac{P_j}{k_{1j}}\left(\left(\frac{d}{dt}\right)^{n-j} + \left(\frac{d}{dt}\right)^{n-j-1} 
    % {\color{blue} 
    e_1^{[j+1]}
    % } 
    + \ldots + 
    % {\color{blue} 
    e_{n-j}^{[j+1]}
    % } 
    \right)~. 
\end{align*}

Using the above expression, we compute a sum from~(\ref{eq:LHS-fin}):
\begin{align*}
    \sum_{j=2}^n (-1)^{j} k_{1j} \det (A')_{1j} ~&=~
    - \sum_{j=2}^n P_j \left(\left(\frac{d}{dt}\right)^{n-j} + \left(\frac{d}{dt}\right)^{n-j-1} 
    % {\color{blue} 
    e_1^{[ j+1]}
    % } 
    + \ldots + 
    % {\color{blue} 
    e_{n-j}^{[j+1]} 
    % } 
    \right)\\
    ~&=~
    -\sum\limits_{j=2}^n \left(\frac{d}{dt}\right)^{n-j} \sum\limits_{i=2}^{j} P_i 
    % {\color{blue} 
    e^{[i+1]}_{j-i}
    % } 
    ~.
\end{align*}

By substituting the above expression into equation~\eqref{eq:LHS-fin}, and collecting coefficients of powers of $\frac{d}{dt}$, it is straightforward to verify that the non-monic coefficients of the left-hand side of the input-output equation~\eqref{eq:i-o} match the final $n-1$ coordinates in~\eqref{eq:coeff-map-fin}. 
\end{proof}

\begin{proposition}[Coefficient map for Wing] \label{prop:coeff-wing}
Assume $n\geq 3$. 
For the $n$-compartment 
model 
$({\rm Wing}_n, In, Out, Leak)$ 
with $In=Out=\{1\}$ and $Leak=\emptyset$,
%{\color{red} Wing model in Figure~\ref{fig:wing-model}}, 
the coefficient map $c:\mathbb{R}^{2n-2} \to \mathbb{R}^{2n-2}$ 
is given by 
\begin{align} \label{eq:coeff-map-wing}
    & (k_{32}, k_{43}, \dots , k_{1n}, k_{21}, k_{31},\dots,k_{n1}) \\
    & \quad \quad \mapsto 
    \big( e'_1,e'_2, \dots ,e_{n-1}', ~ 
        e'_1+K,~
        e'_2+e'_1K, ~
\psi_3,~\psi_4, ~\dots, ~\psi_{n-2},~
%        e_3'+e_{2}'K + Q_n ,~
%        \dots , \\
%        \notag
%    & \quad \quad \quad \quad \quad 
%        e_j'+e_{j-1}'K   - \sum_{i=n-j+2}^n Q_i h^i_{i-n+j-2}  , ~
%        \dots, ~
% 0 + 
e_{n-1}'K  - \sum_{i=2}^n Q_i h^i_{i-2}  \big)~,     \notag
\end{align}
where, for $3 \leq j \leq n-2$,
\[
\psi_j ~:=~   e_j'+e_{j-1}'K   - \sum_{i=n-j+2}^n Q_i h^i_{i-n+j-2} ~ , ~
\]
and $e'_m$ and $h^j_m$ are the $m^{th}$ elementary symmetric polynomials\footnote{By convention, $h^j_0:=1$} on $E':=\{k_{32}, k_{43},\dots,k_{n,n-1}, k_{1n}\}$ and 
$H^j := \{k_{32}, k_{43},\dots, k_{j,j-1}\}$ (for $3 \leq j \leq n$), 
respectively, and 
$Q_j := k_{1n} k_{j1}  (k_{j+1,j}k_{j+2,j+1} \dots k_{n,n-1}) $ (for $2 \leq j \leq n$)
and $K:=k_{21}+k_{31}+\dots+k_{n1}$. %\sum_{i=2}^nk_{i,1}$.
\end{proposition}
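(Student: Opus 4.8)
The plan is to mirror the Fin computation (Proposition~\ref{prop:coeff-fin}), exploiting a duality between the two models: the incoming edges of a Fin model enter the first \emph{row} of $\partial I-A$ and perturb the diagonal entries of compartments $2,\dots,n$, whereas the outgoing edges of a Wing model instead enter the first \emph{column} and perturb only the $(1,1)$-entry. So I would first write out $A':=\partial I-A$ explicitly: its $(1,1)$-entry is $\tfrac{d}{dt}+K$ with $K=k_{21}+\dots+k_{n1}$; its first column below the diagonal records the outgoing edges as $-k_{21},-k_{31},\dots,-k_{n1}$; its subdiagonal records the cycle as $-k_{32},\dots,-k_{n,n-1}$; and the only entry above the diagonal is the corner $-k_{1n}$ in position $(1,n)$. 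Since $In=Out=\{1\}$, the input-output equation is $\det(A')\,y_1=\det\big((A')_{11}\big)\,u_1$. The submatrix $(A')_{11}$ is lower triangular with diagonal $\tfrac{d}{dt}+k_{32},\dots,\tfrac{d}{dt}+k_{n,n-1},\tfrac{d}{dt}+k_{1n}$, so its determinant is $\prod_{x\in E'}\big(\tfrac{d}{dt}+x\big)$, whose non-leading coefficients are exactly $e'_1,\dots,e'_{n-1}$; these are the first $n-1$ coordinates of the map.

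For the left-hand side I would expand $\det(A')$ along the first row, which has only the two nonzero entries $\tfrac{d}{dt}+K$ and $-k_{1n}$:
\[
\det(A') ~=~ \Big(\tfrac{d}{dt}+K\Big)\det\big((A')_{11}\big) ~+~ (-1)^{n}k_{1n}\,\det\big((A')_{1n}\big)~.
\]
The first term is immediate: multiplying $\prod_{x\in E'}(\tfrac{d}{dt}+x)=\sum_{m} e'_m(\tfrac{d}{dt})^{\,n-1-m}$ by $(\tfrac{d}{dt}+K)$ contributes $e'_m+e'_{m-1}K$ to the coefficient of $(\tfrac{d}{dt})^{\,n-m}$, which is precisely the ``$e'_m+e'_{m-1}K$'' portion of each target coefficient $\psi_j$.

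The main obstacle is the corner term $\det\big((A')_{1n}\big)$, and this is where the Wing and Fin arguments genuinely diverge. I would compute it by expanding along its first column, whose entries $-k_{21},\dots,-k_{n1}$ are the outgoing-edge parameters. The key structural claim to verify is that deleting the row of the outgoing edge $k_{j1}$ leaves a \emph{block-diagonal} matrix: an upper-left lower-bidiagonal block with diagonal $\tfrac{d}{dt}+k_{32},\dots,\tfrac{d}{dt}+k_{j,j-1}$, whose determinant is $\prod_{x\in H^j}(\tfrac{d}{dt}+x)$, together with a lower-right triangular block contributing the remaining cycle product $k_{j+1,j}\cdots k_{n,n-1}$. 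Consolidating the three sign contributions (the entry sign, the cofactor sign $(-1)^{r+1}$, and the sign of the triangular block) into a single global sign, and multiplying by the prefactor $(-1)^n k_{1n}$, should yield
\[
(-1)^n k_{1n}\det\big((A')_{1n}\big) ~=~ -\sum_{j=2}^{n} Q_j \prod_{x\in H^j}\Big(\tfrac{d}{dt}+x\Big) ~=~ -\sum_{j=2}^{n} Q_j \sum_{m\ge 0} h^j_m \Big(\tfrac{d}{dt}\Big)^{\,j-2-m}~.
\]

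Finally I would collect coefficients of each power of $\tfrac{d}{dt}$. Matching $(\tfrac{d}{dt})^{\,n-j}$ across the two terms shows the coefficient equals $e'_j+e'_{j-1}K$ minus the sum of those $Q_i h^i_{\bullet}$ with $i\ge n-j+2$, i.e.\ exactly the stated $\psi_j$; the two extreme coefficients and the constant term (which must vanish, since the leak-free column-sum condition forces $\det A=0$, recall Remark~\ref{rmk:0-constant-term}) drop out as special cases of the same bookkeeping. The genuinely delicate points, and the ones I would check most carefully, are the index range $n-j+2\le i\le n$ on the correction sum and the consolidation of the signs in the first-column expansion; everything else is a direct transcription of the Fin argument.
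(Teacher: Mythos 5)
Your argument is correct and takes essentially the same route as the paper's proof: the paper expands $\det A'$ directly along the first column, so that each minor $(A')_{j1}$ is a product of three blocks (a $1\times 1$ block containing $-k_{1n}$, a lower-triangular block giving $\prod_{x\in H^j}(\tfrac{d}{dt}+x)$, and an upper-triangular block giving the cycle product), whereas you expand along the first row and then along the first column of the single corner minor $(A')_{1n}$ --- but these are just two orderings of the same double cofactor expansion, and both yield the identical expression $-\sum_{j=2}^n Q_j\sum_{m}h^j_m\bigl(\tfrac{d}{dt}\bigr)^{j-2-m}$ followed by the same collection of coefficients of powers of $\tfrac{d}{dt}$. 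The sign bookkeeping and index range $n-j+2\le i\le n$ that you flag as delicate check out, so there is no gap.
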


\begin{proof}
For this model, the matrix $A':=(\partial I-A)$ is as follows:
\begin{center}
    $A'=\begin{bmatrix}
     \frac{d}{dt}+k_{21}+k_{31}+\dots+k_{n1} & 0 & \dots & & 0 & -k_{1n}\\
    -k_{21} & \frac{d}{dt}+k_{32} & 0 &\dots &  &0\\
    -k_{31} & -k_{32} & \frac{d}{dt}+k_{43}  \\
    -k_{41} & 0 & -k_{43} & &  & \vdots\\
    \vdots & \vdots & & \ddots & \ddots & 0\\
    -k_{n1} & 0 & \dots & 0 & -k_{n,n-1} & \frac{d}{dt}+k_{1n}\\ 
    \end{bmatrix}$. 
\end{center}
The matrix $(A')_{11}$ is lower triangular, with diagonal entries $(d/dt+k_{32})$, $(d/dt+k_{43})$, \dots, $(d/dt+k_{1n})$.  Hence, the non-monic coefficients of the right-hand side of the input-output equation~\eqref{eq:i-o} are the elementary symmetric polynomials $e'_{1}$, $e'_{2}$, \dots, $e'_{n-1}$ on $E':=\{k_{32}, k_{43},\dots,k_{n,n-1}, k_{1n}\}$, which match the first $n-1$ coordinates in~\eqref{eq:coeff-map-wing}. 

Next, to obtain the left-hand side of~\eqref{eq:i-o}, we expand along the first column of $A'$:
\begin{align} \label{eq:LHS-wing}
    \det A' ~&=~ \left( \frac{d}{dt} + k_{21}+ k_{31}+ \dots + k_{n1} \right) \det (A')_{11} \\
    \notag
        & \quad \quad \quad 
        + k_{21} \det (A')_{21} 
        - k_{31} \det (A')_{31} 
        + \dots 
        + (-1)^{n} k_{n1} \det (A')_{n1} 
    \\ 
    \notag
    ~&=~ \left( \frac{d}{dt} + K \right) \left( \left(\frac{d}{dt} \right) ^{n-1} + \left(\frac{d}{dt}\right)^{n-2}e'_1 + \dots + e'_{n-1} \right) + \sum_{j=2}^n (-1)^{j} k_{j1} \det (A')_{j1}~.
\end{align}

Next, we compute:
\begin{align*} 
(A')_{j1} ~=~ 
\left[
\begin{array}{cccc|ccc|c}
    0 & \dots& & 0& 0&\dots & 0 &  -k_{1n}\\ 
    \hline
    \frac{d}{dt}+k_{32} & 0 & \dots &0 &0&\dots & 0 &0 \\
    -k_{32} & \ddots & & \vdots &\vdots && \vdots & \vdots \\
    & \ddots && 0 & && &  \\
    0 &  & -k_{j-1,j-2} & \frac{d}{dt}+k_{j, j-1} & 0 & \dots & 0 & 0 \\
    \hline
    0 & \dots & & 0 & -k_{j+1, j} & \frac{d}{dt}+k_{j+2, j+1}&  & 0\\
    \vdots &&&\vdots  & & \ddots &\ddots   \\
    0 & \dots & & 0 & & & -k_{n,n-1} & \frac{d}{dt}+k_{1n}\\
\end{array}
\right]
\end{align*}

We see that $\det (A')_{j1}$ is the product of the determinants of three block matrices: the $(1 \times 1)$ upper-right matrix, % $(-k_{1n})$, 
a $(j-2)\times (j-2)$ lower-triangular matrix with diagonal entries $(d/dt + k_{i,i-1})$, and an $(n-j) \times (n-j)$ upper-triangular matrix with diagonal entries $-k_{i+1,i}$:
\begin{align*}
\det (A')_{j1}
        ~&=~ (-1)^{j+1} k_{1n} \left( k_{j+1, j} k_{j+2,j+1}\dots k_{n,n-1}\right) 
        \left( \frac{d}{dt} + k_{32} \right) 
        \left( \frac{d}{dt} + k_{43} \right) 
        \dots
        \left( \frac{d}{dt} + k_{j,j-1} \right) \\
        ~&=~
        (-1)^{j+1} \frac{Q_j}{k_{j1}} \left( 
            \left(\frac{d}{dt}\right)^{j-2} + 
            h^j_{1} \left(\frac{d}{dt}\right)^{j-3} + 
            \dots +             
            h^j_{j-2} 
            \right)~.
\end{align*}

Thus, we can compute a sum from~\eqref{eq:LHS-wing}:
\begin{align*}
 \sum_{j=2}^n (-1)^{j} k_{j1} \det (A')_{j1}
    ~&=~
    - \sum_{j=2}^n  {Q_j} \left( 
            \left(\frac{d}{dt}\right)^{j-2} + 
            h^j_{1} \left(\frac{d}{dt}\right)^{j-3} + 
            \dots +             
            h^j_{j-2} 
            \right)~
            \\
    ~&=~     
    - \sum_{j={2}}^{n} \left( \frac{d}{dt} \right)^{n-j} \sum_{i=n-j+2}^n Q_i h^i_{i-n+j-2}~.
\end{align*}
% {\color{red} I'm not seeing this last expression - it would imply that there's no $\left(\frac{d}{dt}\right)^{0}$ term in the expansion, right? But there is. 
% {\color{violet}  Good catch! I edited above... ok now??} {\color{blue} Happy now! I will delete the colors.}
% I have:
% \begin{align*}
%     ~&=~     
%     - \sum_{j=2}^{n} \left( \frac{d}{dt} \right)^{j-2} \sum_{i=j}^n Q_i h^i_{i-j}~.
% \end{align*}
% }

By substituting the above expression into equation~\eqref{eq:LHS-wing}, and collecting coefficients of powers of $\frac{d}{dt}$, it is straightforward to verify that the coefficients of the left-hand side of the input-output equation~\eqref{eq:i-o} match the final $n-1$ coordinates in~\eqref{eq:coeff-map-wing}. 
\end{proof}

%{\color{red} A useful computation for elementary symmetric polynomials like those in $E^l$ in theorem 3.11 for when we want to construct the Jacobian: 
%
%\par Let $X=:\{x_1, x_2, .., x_n\}$ be a set of $n$ variables, where $x_i:= (a_i+b_i)$. 
%
%For $1 \leq m \leq n$, the $m$-th elementary symmetric polynomial on $X$ is as follows: \\
%\begin{center}
%    $e_m ~=~ 
%    \sum\limits_{j_{1}<j_{2}<\dots<j_m} x_{j_1}...x_{j_m}
%    ~=~
%    \sum\limits_{\substack{j_2<...<j_m \\ j_k \neq i}} (a_i+b_i) (x_{j_2}...x_{j_m}) + 
%    \sum\limits_{\substack{ l_1<...<l_m \\ l_k \neq i} } x_{l_1}...x_{l_m} 
%~=~
%    \sum\limits_{\substack{j_2<...<j_m \\ j_k \neq i}}  a_i(x_{j_2}...x_{j_m}) + \sum\limits_{\substack{j_2<...<j_m \\ j_k \neq i}}b_i (x_{j_2}...x_{j_m}) +
%    \sum\limits_{\substack{ l_1<...<l_m \\ l_k \neq i} } x_{l_1}...x_{l_m} $~,
%    
%\end{center}
%for any $1 \leq i \leq n$.
%Thus, taking the partial derivative with respect to $a_i$ yields: 
%\begin{equation}
%   \dfrac{\partial e_m}{\partial a_i}~=~ \sum\limits_{
%        \substack{j_2<...<j_m \\ j_k \neq i}        
%        } x_{j_2}...x_{j_m}~=~e_{m-1}\{\hat{x_i}\}~=~\dfrac{\partial e_m}{\partial b_i},
%\end{equation}
%where $e_{m-1}\{\hat{x_i}\}$ is the $(m-1)$-th elementary symmetric polynomial on the set $X 
%\smallsetminus \{x_i\}$
%}

\begin{remark} \label{rmk:coeff}
In~\cite[Theorem~4.5]{GNA17Three}, the authors give a formula for the coefficient map of any strongly connected model with $In=Out=\{1\}$ and \emph{at least one leak}.
This formula is in terms of spanning, incoming forests of subgraphs of the underlying graph $G$. This formula agrees with 
the coefficient maps we compute in Propositions~\ref{prop:coeff-cycle},~\ref{prop:coeff-fin}, and~\ref{prop:coeff-wing}, even though those models have \emph{no leaks} (where, in the notation of~\cite{GNA17Three}, we take $\widetilde{G}=G$). This suggests that the result \cite[Theorem~4.5]{GNA17Three} may generalize to models without leaks.
\end{remark}

The coefficient maps of the Fin and Wing models (Proposition~\ref{prop:coeff-fin} and~\ref{prop:coeff-wing}) are complicated, so analyzing the resulting Jacobian matrices (with the aim of assessing identifiability of submodels) is difficult.  We therefore only conjecture that these submodels are identifiable.

\begin{conjecture}[{Submodels of} Fin and Wing are identifiable] \label{conj:fin-wing}
For $n \geq 3$, 
let $G$ be a graph obtained from the $n$-compartment cycle graph
in Figure~\ref{fig:cycle-graph} by adding one or more incoming edges or adding one or more outgoing edges. 
%the $n$-compartment Fin graph ${\rm Wing}_n$
%or 
%the $n$-compartment Wing graph ${\rm Wing}_n$.
Then
$(G, In, Out, Leak)$ 
with $In=Out=\{1\}$ and $Leak=\emptyset$, 
%{\color{blue} (proper) submodels of }the $n$-compartment Fin and Wing models (Figure~\ref{fig:fin-and-wingmodel}) are 
is
generically locally identifiable.
\end{conjecture}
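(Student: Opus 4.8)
The plan is to prove Conjecture~\ref{conj:fin-wing} by showing, via Proposition~\ref{prop:rank}, that the Jacobian of the coefficient map has full column rank (which equals $|E|$, since $Leak=\emptyset$) at a generic point. The first step is to obtain the coefficient map of each submodel: a graph gotten by adding a nonempty set $S$ of incoming edges (respectively, a set $T$ of outgoing edges) is a ``submodel'' of ${\rm Fin}_n$ (respectively, ${\rm Wing}_n$), so its coefficient map is the map of Proposition~\ref{prop:coeff-fin} (respectively, Proposition~\ref{prop:coeff-wing}) specialized by setting $k_{1\ell}=0$ for $\ell\notin S$ (respectively, $k_{j1}=0$ for $j\notin T$). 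The key idea is then a change of variables that exposes a block lower-triangular structure in the Jacobian whose diagonal blocks are visibly nonsingular; one of those blocks is always the Vandermonde-type block coming from the elementary symmetric polynomials on the right-hand side of the input--output equation (Lemma~\ref{lem:elem-sym-poly}).

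For the incoming-edge (Fin) case, I would pass to the variables $K_j:=k_{j+1,j}+k_{1j}$ for $j\in S$, $K_j:=k_{j+1,j}$ for $2\le j\le n-1$ with $j\notin S$, and $K_n:=k_{1n}$, together with $k_{21}$ and the incoming parameters $\{k_{1\ell}:\ell\in S\}$. This is an invertible triangular linear change of coordinates, so it preserves the rank of the Jacobian. Ordering the coefficients as $(e_1^{[2]},\dots,e_{n-1}^{[2]})$, then $e_1^{[2]}+k_{21}$, then $\{\phi_\ell:\ell\in S\}$ (in increasing order of $\ell$), I expect the Jacobian to be block lower-triangular: the $e^{[2]}$-block depends only on $K_2,\dots,K_n$ and is nonsingular by Lemma~\ref{lem:elem-sym-poly}; the coefficient $e_1^{[2]}+k_{21}$ isolates $k_{21}$; and, because $\phi_\ell$ involves only $P_2,\dots,P_\ell$ and $\partial P_i/\partial k_{1\ell}=0$ for $i<\ell$, the block $\big(\partial\phi_{\ell}/\partial k_{1\ell'}\big)_{\ell,\ell'\in S}$ is itself triangular with diagonal entries $\pm\,k_{21}k_{32}\cdots k_{\ell,\ell-1}$, which are generically nonzero. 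The product of these nonzero diagonal-block determinants then gives a nonzero Jacobian determinant, so the submodel is generically locally identifiable.

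For the outgoing-edge (Wing) case, the essential observation is that the outgoing parameters enter the coefficient map \emph{linearly}: each $Q_i=k_{1n}k_{i1}(k_{i+1,i}\cdots k_{n,n-1})$ is linear in $k_{i1}$ with coefficient depending only on the cycle edges $\{k_{32},\dots,k_{n,n-1},k_{1n}\}$. I would again split the variables into the cycle edges $k_{32},\dots,k_{n,n-1},k_{1n}$ (recovered from $e_1',\dots,e_{n-1}'$, a nonsingular Vandermonde block, and not involving $k_{21}$ or any $k_{j1}$) and the remaining unknowns $k_{21},\{k_{j1}:j\in T\}$. The corresponding ``other'' block of the Jacobian is then \emph{constant in the unknowns} and equals the analogous block for the full Wing model with the columns for $j\notin T$ deleted. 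Since the full Wing model is identifiable (Theorem~\ref{thm:fin-wing-0-1-leak-id}), that full block is a nonsingular square matrix, so its columns are linearly independent; hence any subset of its columns -- in particular those indexed by $\{2\}\cup T$ -- remains linearly independent, giving the needed full column rank.

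The main obstacle I anticipate is bookkeeping rather than conceptual. For the Fin case, one must verify carefully that each $\phi_\ell$ introduces exactly one new incoming variable (through $P_\ell$), and that the partial elementary symmetric polynomials $e_m^{[i+1]}$ appearing in $\phi_\ell$ contribute nothing to the diagonal entry -- this is precisely where the explicit form of Proposition~\ref{prop:coeff-fin} is needed. A genuine, if minor, subtlety in both cases is genericity: one must restrict to the locus where the values $K_2,\dots,K_n$ (respectively, $k_{32},\dots,k_{1n}$) are distinct, so that the elementary-symmetric map is a local diffeomorphism, and then check that the triangular diagonal entries and the Wing determinant remain nonzero polynomials after the absent parameters are specialized to $0$. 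If these checks go through as expected, the argument would upgrade Conjecture~\ref{conj:fin-wing} to a theorem for every choice of $S$ and $T$.
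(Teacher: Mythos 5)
The statement you are proving is left as a \emph{conjecture} in the paper: the authors only establish the extreme cases (all incoming or all outgoing edges, Theorem~\ref{thm:fin-wing-0-1-leak-id}) and the cases of one incoming edge or at most two outgoing edges (Theorem~\ref{thm:fin-wing-partial}), explicitly stating that the general Jacobian analysis is difficult. So there is no proof in the paper to compare against; the question is whether your argument closes the gap, and having checked it, I believe it does. For the Fin case, your passage to the variables $K_j$ is an invertible unipotent linear change of coordinates, so it preserves the Jacobian rank, and it makes the structure transparent: each $e_m^{[2]}$ and each $e^{[i+1]}_{m}$ becomes a function of the $K$'s alone, while $P_i$ involves $k_{1,\ell'}$ only for $\ell'\leq i$, with $\partial P_\ell/\partial k_{1\ell}=k_{21}k_{32}\cdots k_{\ell,\ell-1}$ and $\partial P_i/\partial k_{1\ell}=0$ for $i<\ell$. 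Hence the block $\bigl(\partial\phi_\ell/\partial k_{1\ell'}\bigr)_{\ell,\ell'\in S}$ is triangular with generically nonzero diagonal $-k_{21}k_{32}\cdots k_{\ell,\ell-1}$, and the full determinant factors as (Vandermonde block)$\times 1\times$(triangular block), which is a nonzero polynomial. This genuinely generalizes the paper's one-incoming-edge computation, and in fact repairs a wrinkle in it: in the original coordinates the rows $\widetilde e^2_m$ do depend on $k_{1\ell}$, so the paper's claimed block structure really requires the column operation that your change of variables performs automatically. Your Wing argument is new relative to the paper and quite efficient: every left-hand-side coefficient is affine-linear in $(k_{21},k_{31},\dots,k_{n1})$ with coefficients in the cycle parameters (this is visible from the cofactor expansion $\det A'=(\tfrac{d}{dt}+K)\det(A')_{11}+\sum_j(-1)^jk_{j1}\det(A')_{j1}$, since $\det(A')_{11}$ and $\det(A')_{j1}/k_{j1}$ involve only cycle edges), so the relevant Jacobian block does not involve the $k_{j1}$ at all, specialization to $k_{j1}=0$ leaves it unchanged, and deleting columns from the generically nonsingular full-Wing block (nonsingular by Theorem~\ref{thm:fin-wing-0-1-leak-id} plus the block-triangular factorization) preserves linear independence.

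Two cautions for the write-up. First, in the Wing case, base the linearity claim on the structural cofactor expansion rather than on the literal display in Proposition~\ref{prop:coeff-wing}: the indexing of the last few coordinates there appears to be off by one (for instance, the coefficient of $(d/dt)^{n-2}$ should be $e_2'+e_1'K-Q_n$, and the displayed final coordinate $e_{n-1}'K-\sum_{i=2}^nQ_ih^i_{i-2}$ is the constant term, which vanishes for a leak-free model by Remark~\ref{rmk:0-constant-term}). Your argument is insensitive to this because it uses only the affine-linear structure and the count of $n-1$ nontrivial left-hand-side coefficients, but a careful proof should say so. Second, note explicitly why the column-deletion trick cannot be transplanted to the Fin case (specializing $k_{1j}=0$ for $j\notin S$ \emph{does} change the entries of the Fin Jacobian, since they depend on the $K_j$'s), which is precisely why the two cases need the two different arguments you give. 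With those points made precise, your proposal upgrades Conjecture~\ref{conj:fin-wing} to a theorem.
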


Earlier, we proved the case of Conjecture~\ref{conj:fin-wing} when all incoming or all outgoing edges are added  (Theorem~\ref{thm:fin-wing-0-1-leak-id}).
Next, we obtain the following partial result toward Conjecture~\ref{conj:fin-wing} (we can add one or two incoming or outgoing edges).

\begin{theorem}[Adding edges to cycle model] \label{thm:fin-wing-partial}
Let $n \geq 3$.
Let $G$ be a graph obtained from the $n$-compartment cycle graph
in Figure~\ref{fig:cycle-graph} by adding one incoming edge or adding one or (if $n \geq 4$) two outgoing edges.  Let $In=Out=\{1\}$ and $Leak \subseteq \{1,2,\dots, n \}$ with $\lvert Leak \rvert =1$.
Then the model $\widetilde{M} = (G,In,Out,Leak)$ is generically locally identifiable, and so is the model $M$ obtained by removing the leak.
\end{theorem}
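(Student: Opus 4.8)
The plan is to reduce first to the no-leak case, then to a Jacobian-rank computation, and finally to analyze that Jacobian through the elementary-symmetric-polynomial structure of the coefficient maps in Propositions~\ref{prop:coeff-fin} and~\ref{prop:coeff-wing}. Observe that $G$ contains the $n$-cycle and is therefore strongly connected, and that $M$ has its single input in compartment~$1$. Hence, once we show that the no-leak model $(G,\{1\},\{1\},\emptyset)$ is generically locally identifiable, Proposition~\ref{prop:add-leak-in-out}(1) immediately gives that the one-leak model $\widetilde M$ is generically locally identifiable as well (the leak may sit in any compartment), so it suffices to treat the no-leak model. By Proposition~\ref{prop:rank} this amounts to showing that the Jacobian of the coefficient map has full rank, namely $|E|=n+1$ when one edge is added and $|E|=n+2$ when two outgoing edges are added. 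In each case I would exhibit a square submatrix of the Jacobian of the right size and prove its determinant is a nonzero polynomial, with Lemma~\ref{lem:elem-sym-poly} as the engine throughout.

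For a single incoming edge $k_{1j}$ (with $j\in\{2,\dots,n-1\}$), I would start from Proposition~\ref{prop:coeff-fin} and set every incoming edge except $k_{1j}$ to zero. Then the set $E^{[2]}$ collapses to the cycle edges $k_{32},\dots,k_{n,n-1},k_{1n}$ with the single entry $k_{j+1,j}$ replaced by $u_j:=k_{1j}+k_{j+1,j}$, while $\phi_j$ simplifies to $e_j^{[2]}+k_{21}\,e_{j-1}^{[2]}-P_j$ with $P_j=k_{1j}(k_{21}k_{32}\cdots k_{j,j-1})$ (all other $P_i$ vanish). After the invertible linear change of variables that replaces $k_{j+1,j}$ by $u_j$, I would take the $(n+1)\times(n+1)$ submatrix whose rows are $e_1^{[2]},\dots,e_{n-1}^{[2]}$, $e_1^{[2]}+k_{21}$, $\phi_j$ and whose columns are the $u$-variables together with $k_{21}$ and $k_{1j}$. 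This submatrix is block lower-triangular: the $e_m^{[2]}$ depend only on the $u$-variables, so their block is the Vandermonde Jacobian of the elementary symmetric polynomials (nonzero by Lemma~\ref{lem:elem-sym-poly}); the next diagonal entry is $\partial(e_1^{[2]}+k_{21})/\partial k_{21}=1$; and the final diagonal entry is $\partial\phi_j/\partial k_{1j}=-(k_{21}k_{32}\cdots k_{j,j-1})$, a nonzero polynomial. Hence the determinant is nonzero and the model is identifiable.

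For outgoing edges I would run the analogous argument with Proposition~\ref{prop:coeff-wing}, setting all outgoing edges except the added one(s) to zero. Now the $e_m'$ recover, via the same Vandermonde block, the cycle edges other than $k_{21}$, and the coordinate $e_1'+K$ records $K=k_{21}+(\text{sum of added outgoing edges})$; the separation of the individual added edges is carried by the left-hand-side coefficients containing the products $Q_p=k_{1n}k_{p1}(k_{p+1,p}\cdots k_{n,n-1})$. For one outgoing edge $k_{p1}$ this again produces a block-triangular Jacobian whose final corner is, up to a nonvanishing cycle-edge factor, $\partial Q_p/\partial k_{p1}$. For two outgoing edges $k_{p1},k_{q1}$ with $p<q$, I would choose two such left-hand-side coefficients and reduce identifiability to the nonvanishing of a $2\times2$ corner minor in the variables $k_{p1},k_{q1}$; since each coefficient is affine-linear in $(k_{p1},k_{q1})$ through $Q_p$ and $Q_q$, and these enter different coefficients with different $h$-polynomial multipliers (because $p\neq q$), the minor can be arranged to be triangular with nonzero diagonal.

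The incoming-edge case and the single-outgoing-edge case are clean, since only one parameter must be disentangled and the Jacobian is genuinely block-triangular. The main obstacle is the two-outgoing-edge case: here two added parameters must be separated simultaneously from each other and from $k_{21}$, so the proof hinges on a careful choice of the two left-hand-side coefficients from Proposition~\ref{prop:coeff-wing} and on verifying that the associated $2\times2$ minor built from the $Q$-terms does not vanish identically. Managing the combinatorics of the $h$-polynomial multipliers, and confirming that $n\geq4$ is exactly what makes two independent such coefficients available, is the step I expect to require the most care.
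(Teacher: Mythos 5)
Your proposal is correct and follows essentially the same route as the paper: reduce to the no-leak model via Proposition~\ref{prop:add-leak-in-out}, specialize the coefficient maps of Propositions~\ref{prop:coeff-fin} and~\ref{prop:coeff-wing} by zeroing the absent edges, and exhibit a block-triangular square submatrix of the Jacobian whose large block is handled by Lemma~\ref{lem:elem-sym-poly} and whose small corner block is computed explicitly. The only (immaterial) differences are that you achieve triangularity by a linear change of variables and by reducing the outgoing-edge corner to a $2\times 2$ minor in $k_{p1},k_{q1}$, whereas the paper uses row operations and evaluates the corresponding $3\times 3$ determinant directly; these yield the same nonvanishing products of edge parameters.
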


\begin{proof}
By Proposition~\ref{prop:add-leak-in-out}, it suffices to prove that the models $M$ are identifiable.

First, we consider the case of a model $M$ obtained by adding one incoming edge. Let $k_{1 \ell}$ be the added edge (so, $2 \leq \ell \leq n-1$). 
%or $\{k_{1j},k_{1\ell}\}$, with ${\color{red} 2< } j<\ell\leq n-1$, be the added edge(s).  
The coefficient map $c:\mathbb{R}^{n+1}\rightarrow \mathbb{R}^{2n-2}$ 
%(or, respectively, $c:\mathbb{R}^{n+2}\rightarrow \mathbb{R}^{2n-2}$)
is obtained from the coefficient map in Proposition~\ref{prop:coeff-fin} 
by 
setting $k_{1i}=0$ and 
% (for $i\in \{2,3,\ldots,n-1\}\setminus \{\ell\}$, or, respectively, for $i\in \{2,3,\ldots,n-1\}\setminus \{j,\ell\}$),
$P_i=0$, for $i\in \{2,3,\ldots,n-1\}\setminus \{\ell\}$.
%
%\begin{packed_enum}
%    \item setting $k_{1i}=0$ and 
%% (for $i\in \{2,3,\ldots,n-1\}\setminus \{\ell\}$, or, respectively, for $i\in \{2,3,\ldots,n-1\}\setminus \{j,\ell\}$),
%$P_i=0$, for $i\in \{2,3,\ldots,n-1\}\setminus \{\ell\}$, and 
%%or, respectively, for $i\in \{2,3,\ldots,n-1\}\setminus \{j,\ell\}$). 
%    \item replacing $E^2$ by $\widetilde{E}^2=\{k_{i+1,i}~|~i\in \{2,\ldots,n\}\setminus\{\ell\}\}\cup \{k_{1\ell}+k_{\ell+1,\ell}\}$ {\color{violet} Maybe rephrase as replacing the elementary symmetric polynomials themselves, rather than their corresponding sets of variables?}. 
%%(respectively, by
%%$\widetilde{E}^2=\{k_{i+1,i}~|~i\in \{2,\ldots,n\}\setminus\{\ell\}\}\cup \{k_{1j}+k_{j+1,j}~,~k_{1\ell}+k_{\ell+1,\ell}\}$).
%\end{packed_enum}
Therefore, $n+1$ of the coefficients are 
$\widetilde{e}_1^2, \widetilde{e}_2^2, \ldots, \widetilde{e}_{n-1}^2, ~\widetilde{e}_1^2+k_{21},$  $\widetilde{\phi}_{\ell}$, 
where 
 $\widetilde{e}^2_m$ denotes the $m^{\text{th}}$ elementary symmetric polynomial on the following set: 
%  {\color{violet} does $k_{32}$ show up twice below, as $k_{32}$ and also $k_{12}+k_{32}$?}:
\[
    \widetilde{E}^2 :=
    \begin{cases}
    % CASE 1
    \{ k_{43}, \dots,  k_{n,n-1}, k_{1n},~k_{12}+k_{32}  \}
    \quad &\text{ if } \ell = 2 \\
    % CASE 2
     \{k_{32}, k_{43}, \dots, k_{\ell,\ell-1}\}
    \cup 
\{ k_{1 \ell}+ k_{\ell+1,\ell} \}
    \cup
\{ k_{\ell+2,\ell+1},\dots, k_{n,n-1}, k_{1n} \} \quad &\text{ if } \ell \geq 3~,
    \end{cases}
%\{k_{i+1,i}~|~i\in \{2,\ldots,n\}\setminus\{\ell\}\}\cup \{k_{1\ell}+k_{\ell+1,\ell}\}$, 
\]
and $\widetilde{\phi}_{\ell} := \widetilde{e}_{\ell}^2 + k_{21} \widetilde{e}^2_{\ell-1} - P_{\ell}$.

% \[
% \widetilde{\phi}_2 ~:=~
% \begin{cases}
%     % CASE 1
%     \widetilde{e}_2^2 + k_{21} \widetilde{e}^2_1 - P_2 
%     \quad &\text{ if } \ell = 2 \\
%     % CASE 2
%     \widetilde{e}_2^2 + k_{21} \widetilde{e}^2_1 
%     \quad &\text{ if } \ell \geq 3~.
% \end{cases} 
% \]

Consider the $(n+1)\times (n+1)$ submatrix of the Jacobian matrix of $c$, 
with columns indexed by $k_{32},k_{43},\ldots,k_{1n},k_{21},k_{1 \ell}$ 
and rows indexed by 
$\widetilde{e}_1^2, \widetilde{e}_2^2, \ldots, \widetilde{e}_{n-1}^2, ~\widetilde{e}_1^2+k_{21}, ~\widetilde{\phi}_{\ell}$. 
%$e_1^2,e_2^2,\ldots,e_{n-1}^2,~e_1^2+k_{21}~,~\phi_2$.
We call this matrix $\widetilde{J}$, and we let $R_1, R_2, \dots, R_{n+1}$ denote its rows. 
To show that $\det \widetilde{J} \neq 0$,
we first perform two row operations which do not change the determinant:
(1) replace $R_n$ by $(-1)R_1+R_{n}$, and 
(2) replace $R_{n+1}$ by $(-1) R_{\ell}+(-k_{21}) R_{\ell-1}+R_{n+1}$.

The resulting matrix, which we denote by $\overline{J}$, 
is block upper-triangular, with upper-left block of size $(n-1)\times (n-1)$ and lower-right block of size $(2 \times 2)$.
% {\color{blue}
%To show that $\widetilde{J}$ generically has full rank, we begin by showing that $\widetilde{J}$ is equivalent to a block upper triangular matrix.
%We apply the following row operations: 
%$(-1)\cdot R_{\ell}+(-k_{21})\cdot R_{\ell-1}+R_{n+1}\rightarrow R_{n+1}$ and 
%$(-1)\cdot R_1+R_{n}\rightarrow R_{n}$. 
%Since row operations do not change the value of the determinant, this matrix has the same determinant as $\widetilde{J}$. 
For the upper-left block, after setting $k_{1 \ell}=0$, this matrix is the Jacobian matrix of the first through $(n-1)$-st elementary symmetric polynomials (on $n-1$ variables) with respect to those variables, so by Lemma~\ref{lem:elem-sym-poly} the determinant is nonzero (and so is nonzero before setting  $k_{1 \ell}=0$).
% }
%Its upper-left block of size $(n-1)\times (n-1)$ has nonzero determinant by Lemma~\ref{lem:elem-sym-poly}. 

The lower-right $2\times 2$ block of $\overline J$ is the following matrix:
\begin{align*}
    \left[
    \begin{array}{cc}
        1 & 0 \\
        \widetilde{e}_{\ell - 1}^2 - \frac{\partial P_{\ell}}{\partial k_{21}} &
        - \frac{\partial P_{\ell}}{\partial k_{1\ell}}
    \end{array}
    \right]~,
\end{align*}
which has nonzero determinant $- \frac{\partial P_{\ell}}{\partial k_{1\ell}}=-P_{\ell}/k_{1\ell}$. 
Thus, $\det \widetilde{J}= \det \overline{J} \neq 0$, and so, by Proposition~\ref{prop:rank}, the model $M$ is generically locally identifiable.

% {\color{teal} 

Next, we consider the case of adding outgoing edges.  Let $k_{\ell1}$ (with $3 \leq \ell \leq n$) or $\{k_{j1}, k_{\ell 1} \}$ (with $3 \leq j < \ell \leq n$) be the added edge(s).  
The coefficient map 
$c: \mathbb{R}^{n +1} \to \mathbb{R}^{2n-2}$ 
(or, respectively, $c: \mathbb{R}^{n +2} \to \mathbb{R}^{2n-2}$) 
is obtained from the coefficient map in Proposition~\ref{prop:coeff-wing} by:
\begin{packed_enum}
    \item setting $k_{i1}=0$, for $i \in \{3,4,\dots,n\} \smallsetminus \{\ell\}$ (or, respectively,  $i \in \{3,4,\dots,n\} \smallsetminus \{j, \ell\}$), 
    \item setting $Q_i=0$, for $i \in \{1,2,\dots,n\} \smallsetminus \{\ell\}$ (or, respectively,  $i \in \{1,2,\dots,n\} \smallsetminus \{j, \ell\}$), and 
    \item replacing $K$ by $\widetilde K := k_{21}+ k_{\ell 1}$ (or, respectively, $\widetilde K := k_{21}+k_{j1}+k_{\ell 1}$).
\end{packed_enum}

%one outgoing edge case
% Consider the $(n+1)\times (n+1)$ submatrix of the Jacobian matrix of $c$, with columns indexed by $k_{32},k_{43},\ldots,k_{1n},k_{21},k_{j1}$ and rows indexed by the coefficients $e'_1,e'_2,\ldots,e'_{n-1},~e'_1+K~,~e'_{n+2+j}+e'_{n+1-j}K-Q_jh_0^j$. This matrix, which we call $\widetilde J$ is block lower-triangular, with upper-left block of size $(n-1)\times (n-1)$, which has nonzero determinant by Lemma~\ref{lem:elem-sym-poly}.  The lower-right block is the following $2\times 2$ matrix:
% %
% \begin{align*} 
% \left[
% \begin{array}{cc}
%     1 & 1 \\
%     e'_{n+1-j} & e'_{n+1-j}-k_{1n}k_{j+1,j}\cdots k_{n,n-1} \\
% \end{array}
% \right]
% \end{align*}
% which (it is straightforward to check) has nonzero determinant.  
% % EX: set k_{j,j+1} >>0, and all others = 1
% Thus, $\det \widetilde{J}$ is nonzero, and so, by Proposition~\ref{prop:rank}, the model $\widetilde{M}$ is generically locally identifiable.

Consider the $(n+1)\times (n+1)$ (respectively, $(n+2)\times (n+2)$) submatrix of the Jacobian matrix of $c$, with columns indexed by $k_{32},k_{43},\ldots,k_{1n},k_{21},k_{\ell 1}$ (respectively, an extra column for $k_{j1}$) 
and rows indexed by the coefficients 
$e'_1,e'_2,\ldots,e'_{n-1},~e'_1+\widetilde{K}~,~ e'_{n+2 - \ell}+e'_{n+1-\ell}\widetilde{K}-Q_{\ell}h_0^{\ell}$
(respectively, an extra row for 
% {\color{red}
$e'_{n+2 - j}+e'_{n+1-j}\widetilde{K}-Q_{\ell}h_{\ell-j}^{\ell}-Q_j h_0^j$
% }
% {\color{violet} maybe this should be $e'_{n+2-j} + e'_{n+1-j} \widetilde{K} - Q_{\ell}h_{\ell-j}^{\ell}-Q_j h_0^j$ ?} {\color{blue} Yes! Bad copy and paste error.}
). 
This matrix, which we call $\widetilde J$, is block lower-triangular.
The upper-left block, with size $(n-1)\times (n-1)$, has nonzero determinant by Lemma~\ref{lem:elem-sym-poly}.  The lower-right block is the following $2\times 2$ matrix:
\begin{align*} 
\left[
\begin{array}{cc}
    1 & 1 \\
    e'_{n+1-\ell} & e'_{n+1-\ell} - k_{1n} (k_{\ell+1,\ell} k_{\ell+2,\ell+1} \cdots k_{n,n-1}) \\
\end{array}
\right]~,
\end{align*}
which has nonzero determinant, 
%$- k_{1n} (k_{\ell+1,\ell} k_{\ell+2,\ell+1} \cdots k_{n,n-1})$, 
or (respectively) the following $3\times 3$ matrix:
\begin{align*} 
\left[
\begin{array}{ccc}
%k_21 & k_\ell 1 & k_j1
    1 & 1 & 1\\
    e'_{n+1-\ell} 
    & e'_{n+1-\ell} - k_{1n} (k_{\ell+1,\ell} k_{\ell+2,\ell+1} \cdots k_{n,n-1})
    & e'_{n+1-\ell}\\
    e'_{n+1-j} 
    & e'_{n+1-j}-k_{1n} \left(k_{\ell+1,\ell} k_{\ell+2,\ell+1} \cdots k_{n,n-1}\right)h_{\ell-j}^{\ell} 
    & e'_{n+1-j}-k_{1n} \left(k_{j+1,j} k_{j+2,j+1} \cdots k_{n,n-1}\right)
\end{array}
\right]~,
\end{align*}
\noindent which %(it is straightforward to check) 
has (nonzero) determinant 
% {\color{red}
$(Q_j Q_{\ell})/(k_{\ell 1} k_{j1}) = k_{1n}^2  (k_{\ell+1,\ell} \cdots k_{n,n-1})\left(k_{j+1,j}\cdots k_{n,n-1}\right)$.
% }
% {\color{violet}
% Maybe instead 
% $(Q_j Q_{\ell})/(k_{\ell 1} k_{j1}) = k_{1n}^2  (k_{\ell+1,\ell} \cdots k_{n,n-1})\left(k_{j+1,j}\cdots k_{n,n-1}\right)$ ?
% } {\color{blue} Yes! Absolutely.}
Thus, $\det \widetilde{J} \neq 0$. So, by Proposition~\ref{prop:rank}, the model $M$ is generically locally identifiable.
% }
\end{proof}

\section{Discussion} \label{sec:discussion}
Despite much progress, the following basic question remains open: Which linear compartmental models are identifiable?  Here, we proved that certain infinite families belong to this class, including all cycle models with up to one leak and at least one input (and at least one output).  We also showed that adding certain incoming or outgoing edges (for instance, all incoming edges or all outgoing edges) in cycle models also preserves identifiability.  

Our results give rise to several open problems.  First, for cycle models with two or more leaks, which are identifiable?  
% {\color{blue} 
We obtained results toward an answer in Section~\ref{sec:add-leaks-to-cycle}, but our knowledge is not yet complete.
% }
Next, consider cycle models with some incoming or outgoing edges added.  
We conjectured that identifiability is preserved when more than one or two incoming or outgoing edges are added (Conjecture~\ref{conj:fin-wing}).  
Next,
is identifiability preserved when the input or output is moved?  
Finally, among models containing at least one incoming edge and at least one outgoing edge, which are identifiable?

%adding more than one or two incoming or outgoing edges also preserves identifiability
%
%Next, is identifiability preserved when operations such as moving the input or output are performed on the new Fin or Wing models? Lastly, what happens when some of the ``incoming'' or ``outgoing'' edges are reversed?  We give the names Augmented-Fin and Augmented-Wing to these models, and conjecture that they are identifiable.
%\begin{conjecture}
%Assume $n\geq 3$. Every $n$-compartment Augmented-Fin and Augmented-Wing model with 
%$In=Out=\{1\}$ is generically locally identifiable. 
%\end{conjecture}
%{\color{violet} may need to edit this paragraph at the end} For these augmented models, the left-hand side of the input-output equation is the same as in Theorem \ref{Fin Coefficient Map}. Thus, the $W$ block matrix of the Jacobian submatrix, $\Tilde{J}(c)$, is the same, and since none of the incoming or outgoing edges are in $E'$, the $X$ matrix remains the zero matrix. Therefore, one only need show that $\det(Z)\neq 0$. 

In summary, as in~\cite{GHMS19One}, we view our work as a case study into the effect on identifiability of adding, removing, or moving parts of the model (input, output, edge, or leak).  
Indeed, we showed for many models that these operations preserve identifiability.  
Therefore, our work and further progress in this direction will help to resolve the fundamental problem of classifying and characterizing identifiable models.

\subsection*{Acknowledgements}
This research was initiated by SG in the REU in the Department of Mathematics at Texas A\&M University, which was funded by the NSF (DMS-1757872).  NO and AS were partially supported by the NSF (DMS-1752672).  We thank Gleb Pogudin  for helpful discussions and suggestions which improved this work.  

We are grateful to Nicolette Meshkat, who conveyed to us the proof of 
Theorem~\ref{thm:fin-wing-0-1-leak-id}, which we had stated as a conjecture in an earlier preprint. 
% {\color{blue}
We thank a referee for useful suggestions, especially for alternate proofs of Lemma~\ref{lem:elem-sym-poly}, Proposition~\ref{Output-Cycle IDable}, and Theorem~\ref{thm:fin-wing-0-1-leak-id}.
% }

%\bibliographystyle{unsrt}
\bibliographystyle{plain}
\bibliography{Apr2020-Identifiability}
\end{document}